\def\th@plain{%
  \thm@notefont{}% same as heading font
  \itshape % body font
}
\def\th@definition{% 
  \thm@notefont{}% same as heading font
  \normalfont % body font
}
\theoremstyle{definition}
\newtheorem{proposition}{Proposition}[section]
\newtheorem{lemma}[proposition]{Lemma}
\newtheorem{theorem}[proposition]{Theorem}
\newtheorem{corollary}[proposition]{Corollary}
\newtheorem{conjecture}[proposition]{Conjecture}
\newtheorem{remark}[proposition]{Remark}
\newtheorem{definition}[proposition]{Definition}
\numberwithin{equation}{section} \setcounter{tocdepth}{1}
\newcommand{\N}{\mathbb{N}}
\newcommand{\R}{\mathbb{R}}
\newcommand{\C}{\mathbb{C}}
\newcommand{\Z}{\mathbb{Z}}
\newcommand{\Q}{\mathbb{Q}}
\newcommand{\pr}{\mathbb{P}}
\newcommand{\sslash}{\mathbin{/\mkern-6mu/}}
\newcommand{\B}{\mathcal{B}}
\newcommand{\scL}{\mathcal{L}}
\newcommand{\scI}{\mathcal{I}}
\newcommand{\D}{\mathcal{D}}
\newcommand{\scH}{\mathcal{H}}
\newcommand{\scM}{\mathcal{M}}
\newcommand{\X}{\mathcal{X}}
\newcommand{\Y}{\mathcal{Y}}
\newcommand{\scO}{\mathcal{O}}
\renewcommand{\L}{\mathcal{L}}
\newcommand{\J}{\mathcal{J}}
\newcommand{\M}{\mathcal{M}}
\DeclareMathOperator{\Aut}{Aut}
\DeclareMathOperator{\Ric}{Ric}
\DeclareMathOperator{\Grass}{Grass}
\DeclareMathOperator{\Hilb}{Hilb}
\DeclareMathOperator{\DF}{DF}
\DeclareMathOperator{\Amp}{Amp}
\DeclareMathOperator{\Pic}{Pic}
\DeclareMathOperator{\wt}{wt}
\DeclareMathOperator{\Diff}{Diff}
\DeclareMathOperator{\GL}{GL}
\DeclareMathOperator{\vol}{vol}
\DeclareMathOperator{\ev}{ev}
\DeclareMathOperator{\GW}{GW}
\title[Stable maps in higher dimensions]{Stable maps in higher dimensions}
\author[Ruadha\'i Dervan and Julius Ross]{Ruadha\'i Dervan and Julius Ross}
\address{Ruadha\'i Dervan, DPMMS, Centre for Mathematical Sciences, Wilberforce Road, Cambridge CB3 0WB, United Kingdom}
\email{r.dervan@dpmms.cam.ac.uk}
\address{Julius Ross, Department of Mathematics, Statistics, and Computer Science, University of Illinois at Chicago, 851 S. Morgan Street
Chicago, IL 60607.}
\email{julius@math.uic.edu}
\begin{document}

\begin{abstract} 

We formulate a notion of stability for maps between polarised varieties which generalises Kontsevich's definition when the domain is a curve and Tian-Donaldson's definition of K-stability when the target is a point. We give some examples, such as Kodaira embeddings and fibrations. We prove the existence of a projective moduli space of canonically polarised stable maps, generalising the Kontsevich-Alexeev moduli space of stable maps in dimensions one and two. We also state an analogue of the Yau-Tian-Donaldson conjecture in this setting, relating stability of maps to the existence of certain canonical K\"ahler metrics.

\end{abstract}

\maketitle

\section{Introduction}\label{sec:introduction}
\subsection{Foreword}\label{sec:foreword}

A useful idea in algebraic geometry is that properties of spaces are better phrased as properties of morphisms between spaces.  With this in mind, the goal of this paper is to lay the foundations for what is means for a morphism to be K-stable, as well as to study its basic properties and consequences.   In the absolute case, by which we mean without the morphism, K-stability is a condition that is expected to ensure the moduli space of varieties has good properties (for instance ensuring that it is separated and in some cases proper) and, in the Fano case, is the algebro-geometric condition that is equivalent to the existence of a K\"ahler-Einstein metric through the Yau-Tian-Donaldson correspondence \cite{CDS-jams,GT-97}.

 In particular we shall see:
\begin{enumerate}[(i)]
\item K-stable maps arise naturally in many places, for instance through Kodaira embeddings and in the study of K-stability of fibrations.
\item K-stability for maps appears as a leading order asymptotic in a Geometric Invariant Theory  setup, mimicking the passage from Hilbert stability to K-stability of varieties.
\item In the canonically polarised case, K-stable maps are precisely those with semi-log canonical singularities.  Moreover, there exists a projective moduli space of K-stable maps generalising both the KSBA moduli space of stable pairs, and the moduli space of Konsevich stable maps to higher dimensions.
\item There is a version of the Yau-Tian-Donaldson conjecture in this setting, relating K-stable maps to certain canonical metrics in K\"ahler geometry.
\end{enumerate}

We sketch our definition of K-stability for morphisms, which rests on the notion of a test-configuration.  By
$$p : (X,L)\to (Y,T)$$
we mean a morphism $p:X\to Y$ between varieties $X$ and $Y$ that are endowed with $\mathbb Q$-line bundles $L$ and $T$ respectively.    We will always assume that $X$ is projective, $L$ is ample, and will often require some positivity of $T$ (for instance that it be nef or ample).   A \emph{test-configuration} for $p$ is a projective $\mathbb C^*$-degeneration of this data, by which we mean a projective scheme $\mathcal X$ fitting into a diagram
\[
\begin{tikzcd}
\mathcal X \arrow[swap]{d} \arrow{r}{p}  &Y\times \mathbb P^1  \arrow{dl}  \\
\mathbb P^1 
\end{tikzcd}
\]
along with a line bundle $\mathcal L$ on $\mathcal X$ that is relatively ample over $\mathbb P^1$ and a $\mathbb C^*$-action on $(\mathcal X,\mathcal L)$ covering the usual action on $\mathbb P^1$, and satisfying some additional properties, such that the  fibre of $(\mathcal X,\mathcal L, p)$ over any point in $\mathbb P^1$ other than the fixed point $0\in \mathbb P^1$ is isomorphic to $(X,L,p)$.    

Associated to any test-configuration is a numerical invariant $\DF_p(\X,\L)$, called the \emph{Donaldson-Futaki} invariant.  When the total space $\mathcal X$ is a normal variety, the Donaldson-Futaki invariant is given intersection-theoretically on $\mathcal X$ by the formula
$$\DF_p(\X,\L) = \frac{n}{n+1}\mu_p\L^{n+1} + \L^n.(K_{\X/\pr^1}+p^*T)$$
where $K_{\mathcal X/\mathbb P^1}$ is the relative canonical class, $n=\dim X$ and $\mu_p$ is the constant 
$$\mu_p:=  \frac{-(K_X+p^*T).L^{n-1}}{L^n}$$
to be understood as a ratio of intersection numbers computed on $X$.  There is a notion of the norm $\|(\mathcal X,\L)\|_m$ of a test-configuration, with the property that a test-configuration with zero norm has normalisation that is a trivial product (see Section \ref{sec:kstabilitymaps}).

\begin{definition}[K-stability] We say that $p: (X,L)\to (Y,T)$ is
\begin{enumerate}[(i)]
\item \emph{K-semistable} if  $\DF_p(\X,\L)\ge 0$ for all test-configurations for $p$;
\item \emph{K-stable} if $\DF_p(\X,\L)>0$ for all test-configurations for $p$ with $\|(\X,\L)\|_m>0$;
\item \emph{uniformly K-stable} if there exists an $\epsilon>0$ such that for all test-configurations $(\X,\L)$ for $p$ we have $$\DF_p(\X,\L) \geq \epsilon \|(\X,\L)\|_m.$$\end{enumerate} \end{definition}

 The following are some of the basic properties of K-stable maps we will establish (Theorems \ref{thm:embeddings-body}, \ref{thm:variation} and \ref{thm:factorization}).

\begin{theorem}[Properties of K-stable Maps]\label{thm:props-intro}\ 
Assume that $X$ is semi-log-canonical.  Then
\begin{enumerate}[(i)]\setlength{\itemsep}{2pt} \item(Kodaira Embedding)  Any Kodaira embedding 
$$(X,L)\lhook\joinrel\xrightarrow{L^{\otimes k}}(\pr^{N_k},\scO_{\pr}(1))$$
is a uniformly K-stable map for $k\gg 0$.
\item (Factorisation)  Suppose $(X,L)\to (Y,T)$ factors as $$(X,L)\to (Z,T|_Z)\hookrightarrow (Y,T),$$ for some subvariety $Z$ of $Y$.   Then $(X,L)\to (Z,T|_Z)$ is uniformly K-stable if and only if $(X,L)\to (Y,T)$ is. 
\item (Variation) Uniform K-stability of a map $p:(X,L)\to (Y,T)$ is an open condition as $T$ varies in $\Pic_{\Q}(Y)$.
\end{enumerate}
\end{theorem}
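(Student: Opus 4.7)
I would handle the three parts in the order (ii), (iii), (i), increasing in technical weight.

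\emph{Part (ii).} The two sets of test-configurations are in natural bijection: any $\mathcal X\to Y\times\pr^1$ for $p:(X,L)\to(Y,T)$ factors through $Z\times\pr^1$ (the generic fibre lands in $Z$ and $Z$ is closed in $Y$, so the schematic image of $\X$ lies in $Z\times\pr^1$), while composition with $Z\hookrightarrow Y$ is the reverse direction. Under this correspondence, $p^*T$ and $p^*(T|_Z)$ agree as line bundles on $\X$, so the intersection formula produces the same $\DF_p$. The minimum norm depends only on $(\X,\L)$, hence is unchanged, and the equivalence of uniform K-stabilities is immediate.

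\emph{Part (iii).} The Donaldson--Futaki invariant is affine-linear in $T$: writing $T'=T+\epsilon H$ with $H\in\Pic_\Q(Y)$, direct computation gives
$$\DF_{p,T'}(\X,\L)=\DF_{p,T}(\X,\L)+\epsilon\,\Phi_H(\X,\L),\quad \Phi_H(\X,\L):=\L^n\cdot p^*H-\frac{n}{n+1}\cdot\frac{H\cdot L^{n-1}}{L^n}\,\L^{n+1}.$$
The crux is a uniform estimate $|\Phi_H(\X,\L)|\le C(H)\,\|(\X,\L)\|_m$ with $C(H)$ independent of the test-configuration. This is a comparison between $J$-type functionals and the minimum norm: one reduces to the case $H$ very ample, pulls back to $\X$, and uses that the minimum norm dominates such intersection quantities, exploiting the defining property that $\|(\X,\L)\|_m=0$ forces a trivial normalisation. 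Granted the bound, uniform K-stability with constant $\epsilon_0$ at $T$ yields uniform K-stability with constant $\epsilon_0-\epsilon C(H)>0$ at $T'$ for $\epsilon$ small.

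\emph{Part (i).} For the Kodaira embedding by $L^{\otimes k}$ one has $p^*T=k\L$ on $\X$ (up to pullback from $\pr^1$, which is pinned down by the data of the map) and $\mu_p=\mu-k$ with $\mu:=-K_X\!\cdot\!L^{n-1}/L^n$. Substitution yields the clean identity
$$\DF_p(\X,\L)=\DF_{\mathrm{abs}}(\X,\L)+\frac{k}{n+1}\,\L^{n+1},$$
where $\DF_{\mathrm{abs}}$ is the usual absolute Donaldson--Futaki invariant. Since $X$ is slc, Odaka's theorem gives $\DF_{\mathrm{abs}}(\X,\L)\ge 0$ on normal test-configurations. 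It then suffices to bound $\L^{n+1}\ge c\,\|(\X,\L)\|_m$ with $c>0$ depending only on $(X,L)$: this is the comparison of (iii) applied with $H=T$, the point being that the rigidification $p^*T=k\L$ for map test-configurations forces $\L^{n+1}$ and $\|(\X,\L)\|_m$ to vanish on the same locus. Taking $k$ large enough that $ck/(n+1)$ exceeds any prescribed threshold yields uniform K-stability.

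The principal technical obstacle is the quantitative norm comparison underlying both (iii) and (i). The qualitative content (that $\Phi_H$ and $\L^{n+1}$ vanish when $\|\cdot\|_m=0$) is essentially the characterisation of zero-norm test-configurations, but extracting uniform constants directly from the intersection-theoretic definitions, in the relative (map) setting rather than the absolute one, is the step that will require the most care.
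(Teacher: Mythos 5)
Your part (ii) is fine and in fact more direct than the paper's route: flatness over $\pr^1$ guarantees every associated point of $\X$ meets $\pi^{-1}(\pr^1\setminus 0)$, so the schematic-image argument is valid and gives a bijection of test-configurations preserving both $\DF_p$ and the minimum norm; the paper instead deduces this case from a general composition statement (Theorem \ref{thm:factorization}) whose harder direction uses a resolution of indeterminacy and a perturbation $\L-\delta E$. The problems are in (i) and (iii). In (i) the identity you write is wrong: on a test-configuration dominating $X\times\pr^1$ the pullback $p^*\scO_{\pr}(1)$ is $kL$ pulled back from $X$, not $k\L$, and the correct computation gives
$$\DF_p(\X,\L)=\DF_{\mathrm{abs}}(\X,\L)+k\,\|(\X,\L)\|_m,$$
the extra term being exactly $k$ times the minimum norm by its definition. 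Your proposed substitute bound $\L^{n+1}\ge c\,\|(\X,\L)\|_m$ with $c>0$ is false: for a non-trivial blow-up semi-test-configuration $(\B,rL-E)$ one has $(rL-E)^{n+1}=(rL-E)^n.rL-(rL-E)^n.E<0$ by Lemma \ref{inequalities}, while the norm is strictly positive. Moreover ``Odaka's theorem gives $\DF_{\mathrm{abs}}\ge 0$'' is not available: an slc variety with an arbitrary polarisation $L$ need not be K-semistable in the absolute sense (only the canonically polarised and Calabi--Yau cases are covered by Odaka). What is actually needed, and what the paper's proof of Theorem \ref{thm:embeddings-body} establishes, is the uniform lower bound $\DF_{\mathrm{abs}}(\B,rL-E)\ge -C\|(\B,rL-E)\|_m$ with $C$ independent of the test-configuration; this uses inversion of adjunction (to get $(rL-E)^n.K_{\B/X\times\pr^1}\ge0$ from the lc hypothesis), the intersection inequalities of Lemma \ref{inequalities}, and crucially the sharpened inequality $(rL-E)^n.(rL+(n-1)E)\ge 0$ of Proposition \ref{odaka-improvement}.

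In (iii) you correctly isolate the crux, namely $|\J_T(\X,\L)|\le C(T)\,\|(\X,\L)\|_m$ uniformly over test-configurations, but the mechanism you offer --- that $\|(\X,\L)\|_m=0$ forces triviality, so the norm ``dominates'' $\J_T$ --- cannot produce a uniform constant: there is no compactness on the set of test-configurations, and two nonnegative functionals with the same vanishing locus need not be comparable. The paper proves the bound by showing $\J_{cL-T}(\B,rL-E)\ge 0$ for $c\gg0$ via an explicit rearrangement whose positivity again rests on Proposition \ref{odaka-improvement} together with Lemma \ref{inequalities}; the paper even remarks that with only the weaker inequality $(rL-E)^n.(rL+nE)>0$ this argument breaks down. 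So both (i) and (iii) reduce to the same quantitative intersection-theoretic input, which your proposal identifies as the hard step but does not supply.
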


The previous theorem yields many examples of K-stable maps.  It is clear that not all maps are K-stable; for a simple example if $(X,L)$ is a polarised variety that is not K-stable (in the absolute sense) and $(X',L')$ is any other polarised variety then the projection $p: (X\times X', L\boxtimes L')\to (X',L')$ is not K-stable.  
\begin{center}
*
\end{center}

For our next statement, suppose $f:X\to B$ is a fibration such that each fibre of $f$ is smooth and $K_{X/B}$ is relatively ample. For simplicity assume that the fibres also all have trivial automorphism group.   Then $X$ induces a classifying morphism $p: B \to \scM_{can}$ to the moduli space of smooth canonically polarised varieties (again with trivial automorphism group).  Let $V$ be the volume of the fibres, $L_{CM}$ be the CM-line bundle on $\scM_{can}$ (defined in Section \ref{sec:fibrations}) and $L_B$ be a choice of ample line bundle on $B$.
\begin{theorem}[Fibrations]\label{thm:fibrationsintro}\label{thm:introfibration}\
Set $$\delta := \frac{1}{(\dim X-\dim B+1)V}.$$
If the classifying map $$p:(B,L_B) \to (\scM_{can},\delta L_{CM})$$ is not K-semistable, then the polarised variety $$(X,K_{X/B} + mf^*L_B )$$ is not K-semistable (in the absolute sense) for $m\gg 0$.
\end{theorem}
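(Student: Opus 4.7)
The plan is to convert a destabilising test-configuration for the classifying map $p$ into a destabilising test-configuration for the polarised variety $(X, K_{X/B} + mf^*L_B)$ via an adiabatic (large-$m$) construction, and then verify that the leading coefficient in $m$ of an expansion of the absolute Donaldson--Futaki invariant recovers a positive multiple of $\DF_p$.

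Suppose $(\scB, \scL_B)\to \pr^1$ is a test-configuration for $p:(B,L_B)\to (\scM_{can}, \delta L_{CM})$ with $\DF_p(\scB, \scL_B) < 0$. As part of the test-configuration data one has a $\mathbb{C}^*$-equivariant morphism $\tilde p: \scB \to \scM_{can}$ restricting to $p$ on the general fibre of $\scB\to\pr^1$; pulling the universal family over $\scM_{can}$ back along $\tilde p$ produces a flat projective family $\pi: \scX \to \scB$ whose composition with $\scB\to \pr^1$ is a $\mathbb{C}^*$-equivariant projective degeneration of $X$. Set $\L_m := K_{\scX/\scB} + m\,\pi^*\scL_B$; by relative ampleness of $K_{X/B}$ over $B$ and ampleness of $\scL_B$ over $\pr^1$, this is relatively ample over $\pr^1$ for $m \gg 0$, and restricts to $L_m = K_{X/B} + m f^*L_B$ on the general fibre. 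So $(\scX, \L_m)$ is a test-configuration for $(X, L_m)$.

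The heart of the argument is the asymptotic expansion
$$\DF(\scX, \L_m) \;=\; C\cdot m^{\dim B + 1}\,\DF_p(\scB, \scL_B) \;+\; O\!\left(m^{\dim B}\right),$$
for some positive constant $C$ depending only on $\dim X$, $\dim B$ and $V$. Writing $d=\dim X - \dim B$ and expanding each intersection number appearing in the absolute DF formula on $\scX$ binomially in $m$, and then repeatedly applying the projection formula along $\pi$, three ingredients drive the calculation. First, $\pi_*(K_{\scX/\scB}^{d}) = V\cdot [\scB]$ produces the leading behaviour of $\L_m^{n+1}$ and $\L_m^{n}$ on $\scX$, and likewise of $L_m^n$ and $K_X\cdot L_m^{n-1}$ on $X$. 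Second, $\pi_*(K_{\scX/\scB}^{d+1})$ is (by the defining formula for the CM line bundle in the canonically polarised case) a fixed positive multiple of $\tilde p^*L_{CM}$, and this is the sole source of the CM-line-bundle term $\scL_B^{\dim B}\cdot \tilde p^*(\delta L_{CM})$ of $\DF_p$. Third, the decomposition $K_{\scX/\pr^1} = K_{\scX/\scB} + \pi^*K_{\scB/\pr^1}$ produces the $K_{\scB/\pr^1}$-term. The normalisation $\delta = 1/((d+1)V)$ is designed precisely so that these pieces fit together: it matches the proportionality in $\pi_*(K_{\scX/\scB}^{d+1}) \propto \tilde p^*L_{CM}$ against the slope coefficient coming from $\mu_{X,m}\L_m^{n+1}$, at which point the $m^{\dim B + 1}$-coefficient reassembles, modulo an overall positive factor, into the formula for $\DF_p(\scB, \scL_B)$.

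Granted the asymptotic, the conclusion is immediate: $\DF_p(\scB, \scL_B) < 0$ forces $\DF(\scX, \L_m) < 0$ for $m \gg 0$, so $(X, L_m)$ is not K-semistable. The main technical obstacle is the intersection-theoretic bookkeeping: one has to expand both $\mu_{X,m}\L_m^{n+1}$ and $\L_m^n\cdot K_{\scX/\pr^1}$ to the subleading order in $m$ necessary to pick up the CM contribution, and verify that the $m^{\dim B + 1}$-coefficient assembles into precisely a positive multiple of the map-DF on $\scB$, with the constant $\delta$ pinned down by matching the normalisation of $L_{CM}$.
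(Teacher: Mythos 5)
Your architecture is exactly the paper's (Proposition \ref{prop:DFfibration} and Corollary \ref{cor:Kstabfibration}): pull back the universal family along the equivariant map $\scB\to\scM_{can}$ to get a fibre-product test-configuration $\scX$ for $X$ polarised by $K_{\scX/\scB}+m\pi^*\scL_{\scB}$, expand the absolute Donaldson--Futaki invariant in $m$ using the projection formula, the decomposition $K_{\scX/\pr^1}=K_{\scX/\scB}+\pi^*K_{\scB/\pr^1}$, and the identification of $\pi_*\bigl(K_{\scX/\scB}^{\,d+1}\bigr)$ with $\tilde p^*c_1(L_{CM})$, then take $m\gg 0$.

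However, your central asymptotic is off by one order of $m$, and as stated it is false. Write $b=\dim B$ and $d=n-b$. The coefficient of $m^{b+1}$ in $\DF(\scX,\L_m)$ vanishes identically: using $\mu(X_b,K_{X_b})=-1$ and $\pi_*\bigl(K_{\scX/\scB}^{\,d}\bigr)=V[\scB]$, the leading term of $\tfrac{n}{n+1}\mu(X,L_m)\L_m^{n+1}$ is $-\tfrac{n-b}{n+1}\binom{n+1}{b+1}V\,m^{b+1}\,\scL_{\scB}^{b+1}$, the leading term of $\L_m^{n}.K_{\scX/\pr^1}$ is $\binom{n}{b+1}V\,m^{b+1}\,\scL_{\scB}^{b+1}$, and $\tfrac{n-b}{n+1}\binom{n+1}{b+1}=\binom{n}{b+1}$, so the two cancel. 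The map Donaldson--Futaki invariant appears only at the next order: the correct expansion is
$$\DF(\scX,\L_m)=V\binom{n}{b}\,\DF_{p}(\scB,\scL_{\scB})\,m^{b}+O(m^{b-1}).$$
This cancellation is not cosmetic; it is precisely why the $O(m^{-1})$ correction to the slope $\mu(X,L_m)$, the $\pi^*K_{\scB/\pr^1}$ term, and the CM contribution --- all of which live at order $m^{b}$ --- become visible, and it is where the normalisation $\delta=1/((d+1)V)$ is actually consumed. Indeed your own remark that one must ``expand to the subleading order necessary to pick up the CM contribution'' contradicts placing the answer at order $m^{b+1}$: a term entering only at order $m^{b}$ cannot appear in the $m^{b+1}$ coefficient, so the verification you propose for that coefficient would return zero rather than a multiple of $\DF_p$. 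The conclusion of the theorem is unaffected once the order is corrected, since a negative leading $m^{b}$-coefficient still forces $\DF(\scX,\L_m)<0$ for $m\gg 0$.
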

In fact, thinking of a test-configuration for $p:B\to \scM_{gen}$ as a degeneration of $p$ to some map $p_0:\B_0\to \scM_{can}$, a fibre-product construction with the universal family over $\scM_{can}$ gives a test-configuration $\mathcal X$ for $X$ whose limit is the fibration corresponding to $p_0$.  The leading order term in the Donaldson-Futaki invariant for $\mathcal X$ as $m\gg 0$ turns out to be precisely Donaldson-Futaki invariant of the test-configuration for $p:(B,H)\to (\scM_{can},L_{CM})$, which implies Theorem \ref{thm:introfibration}.

The reason for restricting attention to varieties with trivial automorphism group is to ensure the existence of a universal family on the moduli space.  However essentially the same statement holds much more generally, allowing for automorphisms or for fibrations with fibres that are possibly not canonically polarised, as long as these fibres still vary in some kind of moduli space or stack (see Remark \ref{rmk:stacks}).  Another situation in which an analogue of Theorem \ref{thm:fibrationsintro} applies is to the study of K-stability of projective bundles $\pr(E)\to B$, by noting that these are induced by maps (of stacks) from $B$ to the moduli stack of projective space (see Remark \ref{rmk:bundles}).

\begin{center}
*
\end{center}

The definition of K-stability for morphisms extends without difficulty to the log case, in which $X$ is endowed also with a boundary divisor $D$.  In the following assume that $H$ is an ample $\mathbb Q$-line bundle on $Y$.    We shall say that a map  $p: ((X,D);L)\to (Y,H)$ is \emph{canonically polarised} if $L=K_X+D+p^*H$ is ample.

\begin{proposition}[Dervan {\cite[Theorem 1.7, Theorem 1.11]{RD-twisted}}]\label{prop:slcmap}   A canonically polarised map $p: ((X,D);L)\to (Y,H)$ is uniformly K-stable if and only if $(X,D)$ has semi-log canonical singularities (see Remark \ref{rmk:generaltype}).
\end{proposition}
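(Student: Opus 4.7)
The plan is to reduce the statement directly to the twisted K-stability results cited in \cite{RD-twisted}. The idea is that, in the canonically polarised case, the data of a map $p \colon ((X,D);L) \to (Y,H)$ with $L = K_X + D + p^*H$ is the same data as the pair $(X,D)$ with its canonical polarisation equipped with the twist $p^*H$, which is precisely the setup in which Dervan's earlier theorems are phrased.

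First I would compute $\mu_p$ in this case. Using $L = K_X + D + p^*H$, the log version of the formula for $\mu_p$ gives
\[
\mu_p = \frac{-(K_X + D + p^*H) \cdot L^{n-1}}{L^n} = -1.
\]
Substituting into the formula for the Donaldson-Futaki invariant on a normal test-configuration $(\X,\L)$, and writing $\D$ for the closure of $D \times \C^*$ in $\X$, yields
\[
\DF_p(\X,\L) = -\frac{n}{n+1}\L^{n+1} + \L^n \cdot (K_{(\X,\D)/\pr^1} + p^*H).
\]
This is manifestly the twisted Donaldson-Futaki invariant of the test-configuration in the sense of \cite{RD-twisted}, with the twisting class being $p^*H$. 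Similarly, the norm $\|(\X,\L)\|_m$ coincides with the minimum norm used in the twisted setting, since it depends only on $\L$ and the $\C^*$-action, not on the map.

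Next I would verify that the correspondence between test-configurations on the two sides is a bijection: a test-configuration for the map $p$ is, tautologically, a test-configuration for $(X,L)$ together with an equivariant morphism to $Y \times \pr^1$ extending $p \times \id_{\pr^1}$ on the general fibre, and the map to $Y$ simply supplies the effective class $p^*H$ as a twist. Since $H$ is ample on $Y$, $p^*H$ is nef, so the hypotheses of the cited theorems of \cite{RD-twisted} (uniform K-stability of a canonically polarised pair twisted by a nef class is equivalent to the semi-log canonical property) apply without change. Combining these, uniform K-stability of the map $p$ is equivalent to uniform twisted K-stability of $((X,D);L)$ with twist $p^*H$, which by the cited theorems is equivalent to $(X,D)$ being semi-log canonical.

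The main obstacle is the bookkeeping: one has to check that the definitions of test-configuration, Donaldson-Futaki invariant, and norm used here genuinely match those of \cite{RD-twisted} under the identification above, especially in the log setting where $\D$ must be defined as the flat closure and where the base change from $\A^1$ to $\pr^1$ in the normalisation conventions matters. Once this matching is set up, no further analysis is needed and the proposition follows immediately from the cited results.
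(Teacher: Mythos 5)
Your overall strategy --- reducing to uniform twisted K-stability with twist $p^*H$ and then quoting \cite{RD-twisted} --- is exactly the route the paper takes, and your computation $\mu_p=-1$ and the identification of $\DF_p$ with the twisted Donaldson--Futaki invariant are correct. The gap is the step where you assert that the correspondence between test-configurations on the two sides is a bijection. It is not: a test-configuration for the map $p$ is a test-configuration for $(X,L)$ \emph{together with} an equivariant morphism to $Y$ extending $p$ on the general fibre, and a priori not every test-configuration for $(X,L)$ admits such an extension over the central fibre. Forgetting the map is injective, not surjective. Consequently uniform K-stability of the map is, on its face, a \emph{weaker} condition than uniform twisted K-stability (it quantifies over fewer degenerations), so while the direction ``slc $\Rightarrow$ $p$ uniformly K-stable'' follows immediately from \cite{RD-twisted}, the converse direction does not: a destabilising test-configuration for $(X,L)$ produced by the twisted theory need not carry a map to $Y$, so it does not directly witness instability of $p$. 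This is precisely the subtlety flagged in Remark \ref{rmk:twistedKstability}.

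The paper closes this gap with the perturbation argument of Theorem \ref{thm:factorization}~(ii) and the Corollary of Section \ref{sec:fact}: given any test-configuration $(\X,\L)$ for $(X,L)$ with $\DF<\epsilon\|\cdot\|_m$, take an equivariant resolution of indeterminacy $\Y\to\X$ dominating $X\times\pr^1$; then $\Y$ automatically admits a map to $Y$ by composition with $X\times\pr^1\to X\to Y$, and replacing the polarisation by $\L-\delta E$ for small $\delta>0$ yields a genuine test-configuration for the map whose Donaldson--Futaki invariant and minimum norm are arbitrarily close to the original ones, preserving the destabilising inequality. Alternatively, one can observe that the destabilisers constructed in \cite{RD-twisted} are Odaka-type blow-ups of $X\times\pr^1$ along flag ideals as in Section \ref{sec:odaka}, which come with maps to $Y$ for free. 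With either of these inserted, your reduction is complete; without one of them the ``only if'' direction of the proposition is unproved.
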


In the following we say that the line bundle $H$ on $Y$ is \emph{sufficiently ample} if there exists an ample line bundle $T'$ on $Y$ with $H-2nT'$ nef. We prove the following:

\begin{theorem}[Moduli Spaces of Canonically Polarised Maps]\label{thm:moduli}\
There exists a separated, projective moduli space $\mathcal M(Y,H)$ of canonically polarised stable maps with fixed target $(Y,H)$ as long as $H$ is sufficiently ample.
\end{theorem}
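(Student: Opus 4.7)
The plan is to realise $\M(Y,H)$ as a quotient of a locally closed subscheme of a Hilbert scheme, adapting the construction of the KSBA moduli space of stable pairs to the setting of maps. By Proposition \ref{prop:slcmap}, the K-stable canonically polarised maps with target $(Y,H)$ are precisely the morphisms $p:(X,D)\to Y$ such that $(X,D)$ is slc and $L=K_X+D+p^*H$ is ample; we parametrise such data with fixed numerical invariants (volume and Hilbert polynomial). The sufficient ampleness hypothesis on $H$ is what guarantees that $L$ is automatically ample in the presence of slc singularities on the source, and it also supplies the positivity needed in the steps below.

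First I would prove a boundedness statement: there exists an integer $k$, depending only on $(Y,H)$ and the fixed discrete invariants, such that $L^{\otimes k}$ is very ample and satisfies $H^i(L^{\otimes k})=0$ for $i>0$ for every such $(X,p)$. For this one uses effective very ampleness for slc pairs (Hacon--Xu, Koll\'ar, Fujino) together with the twist by $p^*H^{\otimes k}$. Embedding $X\hookrightarrow \pr^{N_k}$ via $|L^{\otimes k}|$, the graph $\Gamma_p\subset \pr^{N_k}\times Y$ lies in a fixed Hilbert scheme $\scH:=\Hilb(\pr^{N_k}\times Y)$ cut out by the Hilbert polynomial of $\Gamma_p$ with respect to $\O_{\pr^{N_k}}(1)\boxtimes H$.

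Next, I would carve out the locally closed subscheme $U\subset \scH$ consisting of those subschemes $\Gamma$ which (i) project isomorphically onto a subscheme $X\subset \pr^{N_k}$, so that $\Gamma$ is the graph of a morphism $p:X\to Y$, (ii) satisfy $\O_{\pr^{N_k}}(1)|_X\cong (K_X+p^*H)^{\otimes k}$, and (iii) have $(X,D)$ semi-log-canonical. Each of these is locally closed (openness and constructibility of slc in flat families is due to Koll\'ar). The moduli space is then the quotient $\M(Y,H)=U/PGL(N_k+1)$ by the change of projective frame action. Since K-stable canonically polarised pairs have finite automorphism groups (as automorphisms act faithfully on the finite-dimensional space $H^0(L^{\otimes k})$), this quotient exists as a separated algebraic space, and as a scheme by standard arguments. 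Separatedness and properness follow from the valuative criterion: uniqueness and existence of slc extensions over a curve (Koll\'ar--Shepherd-Barron--Alexeev, Hacon--Xu) together with properness of $Y$ produce the required unique extensions of families.

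The principal obstacle is projectivity. One constructs a CM-type line bundle on $\M(Y,H)$ by
$$\lambda_m:=\det p_{S*}\bigl((K_{\scX/S}+p_S^*H)^{\otimes m}\bigr)$$
for a family $(\scX,p_S)\to S$ and $m$ sufficiently divisible, and shows that $\lambda_m$ descends to an ample line bundle on $\M(Y,H)$. Ampleness then reduces to recent positivity theorems for direct images of relative log canonical bundles (Koll\'ar, Kov\'acs--Patakfalvi, Patakfalvi--Xu): the absolute positivity on $\M(Y,H)$ is deduced from the relative semi-positivity, and the sufficient ampleness of $H$ is precisely what converts semi-positivity into strict positivity in our situation. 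This positivity step, rather than the construction of $\M(Y,H)$ as a separated proper algebraic space, is where the real depth lies.
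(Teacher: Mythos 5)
Your overall architecture — bounded family of graphs in a fixed Hilbert scheme of $\pr^{N_k}\times Y$, a locally closed locus of stable maps, quotient by the projective frame group, valuative criteria for separatedness and properness, and projectivity via positivity of a determinant line bundle — is exactly the route the paper takes. But there are two genuine gaps at precisely the points where the higher-dimensional, non-normal case requires new input beyond the curve/surface cases. First, your moduli functor is underspecified: you never impose Koll\'ar's condition that the reflexive powers $(\omega_{\X/S}\otimes\scO_{\X}(\D)\otimes p^*H)^{[m]}$ be flat over $S$ and commute with arbitrary base change. Without this the fibrewise restriction of these sheaves need not agree with the corresponding sheaf on the fibre, the functor has no well-defined value on non-reduced bases, and your "locally closed" locus has no canonical scheme structure. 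The paper handles this via Koll\'ar's theory of hulls and husks, which is what makes local closedness a theorem rather than an assertion. Second, your separatedness and properness arguments implicitly run the MMP on the total space of a family of slc pairs, but the log canonical ring of an slc pair is not finitely generated in general (Koll\'ar's example), so one cannot take relative log canonical models directly. The paper must instead pass to the normalisation, record the conductor $\bar F$ and the involution $\tau$ on $(\bar F^{\nu},\Diff_{\bar F^{\nu}}(\bar D))$, apply Hacon--Xu's log canonical closures componentwise, and then invoke Koll\'ar's gluing theory to reassemble the non-normal total space together with its map to $Y$. This gluing step is where the map to $Y$ on the central fibre is actually produced, and it is absent from your sketch.

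A smaller but real misattribution: you locate the role of "sufficient ampleness" of $H$ in the projectivity step, as converting semipositivity into strict positivity. In the paper it enters instead in the properness argument: the log canonical closure is constructed relative to $C'\times Y$, so one only obtains that $K_{\X_0}+\D_0$ is ample \emph{over} $Y$, and the hypothesis $H-2nM$ ample combined with Fujino's bound on lengths of extremal rays for slc varieties is what upgrades this to absolute ampleness of $K_{\X_0}+\D_0+p^*H$ (Lemma \ref{relative-to-absolute-ampleness}). Projectivity itself is obtained by citing Alexeev's argument (Koll\'ar's ampleness technique plus Fujino's semipositivity theorem for higher-dimensional pairs), which is close in spirit to your determinant line bundle proposal but does not rest on the ampleness of $H$.
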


This recovers Kontsevich's moduli space of stable maps when the domain is a curve, Alexeev's moduli of stable maps where the domain has dimension two \cite{va-stablemaps}, and the KSBA moduli space of stable pairs in the absolute case. We will follow Alexeev's strategy closely, and our result can be seen as an application of the recent progress in the minimal model program.

A natural question is whether one can define higher dimensional Gromov-Witten invariants using such maps, which we shall discuss this in Section \ref{sec:GW}. The main difference in the higher dimensional case is that the analogue of the ``evaluation maps'' in the definition of the usual Gromov-Witten invariants become maps to certain Hilbert schemes, since we are dealing with varieties together with divisors rather than points. The construction of a virtual fundamental class seems out of reach with present techniques, thus as it stands the enumerative invariants we construct are not invariant under deformations of $Y$ in general. Nevertheless, it seems interesting that one can define enumerative invariants using the moduli spaces constructed in Theorem \ref{thm:moduli}.

A special case of Theorem \ref{thm:moduli} recovers the construction of the moduli space of KSBA stable varieties, which gives a moduli space of birational models of varieties of general type. Conjecturally, when $X$ is not uniruled, the minimal model program terminates with an Iitaka fibration $X\to X^{can}$, such that the generic fibre of the fibration is Calabi-Yau, and $K_{X^{can}}+D$ is ample, for some divisor $D$ encoding the multiple fibres of the fibration. When \emph{all} fibres are Calabi-Yau, this implies that the map $X^{can}\to (\M_{CY},L_{CM})$ is a stable map, where $(\M_{CY},L_{CM})$ is a moduli space of Calabi-Yau varieties endowed with the CM line bundle. In a formal sense, a similar result holds in general replacing $L_{CM}$ with the moduli part of the fibration (see for example \cite{FA}). This suggests a link between K-stability of maps and the minimal model program for generalised polarised pairs, which has been a crucial ingredient in the recent progress in understanding of the minimal model program for varieties which are not of general type \cite{CB,birkar2}. Two questions arise naturally from these observations: firstly, whether or not one can form a birational moduli space of non-uniruled varieties using K-stable maps, and secondly, whether or not one can give a K-stability interpretation of the remaining case of Mori fibre spaces, which should include K-stability of Fano varieties as a special case.

\begin{center}
*
\end{center}

A primary motivation for considering K-stability in the absolute case is the link with canonical K\"ahler metrics.  To discuss the analogy for maps, suppose $\alpha \in c_1(T)$ is a K\"ahler metric on $Y$. We say that a $(1,1)$-form $\omega \in c_1(L)$ on $X$ is \emph{$p^*\alpha$-twisted constant scalar curvature K\"ahler} (or simply \emph{twisted cscK}) if $\omega$ is positive, so is a K\"ahler form, and satisfies
$$ \operatorname{Scal}(\omega) - \Lambda_{\omega} p^*\alpha \equiv const.$$
where $\operatorname{Scal}(\omega)$ denotes the scalar curvature of the Riemannian metric associated to $\omega$.  The following is the analogy of the Yau-Tian-Donaldson conjecture for maps.

\begin{conjecture}\label{YTD}Suppose $p: (X,L)\to (Y,T)$ has discrete automorphism group (which will occur, for instance, if $p^*T$ is ample).  Then $c_1(L)$ admits an $p^*\alpha$-twisted cscK metric if and only if the map $p:(X,L)\to (Y,T)$ is uniformly K-stable.
\end{conjecture}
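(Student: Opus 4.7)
\noindent\emph{Proof plan.} The strategy is to adapt the proof of the classical Yau-Tian-Donaldson conjecture due to Chen-Donaldson-Sun and Chen-Cheng to the twisted setting where the twist form is the pullback $p^*\alpha$. Introduce the twisted Mabuchi functional $\M_{\alpha} = \M + \J^{p^*\alpha}$ on the space $\H$ of K\"ahler potentials in $c_1(L)$, whose critical points are exactly the $p^*\alpha$-twisted cscK metrics; both summands are convex along weak geodesics because $p^*\alpha$ is semipositive.

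For the easy direction, given a test-configuration $(\X, \L)$ for $p$ one produces a weak geodesic ray of potentials in $c_1(L)$. An asymptotic slope formula of Berman-Boucksom-Hisamoto and Sj\"ostr\"om Dyrefelt type, extended to incorporate the twist, identifies the slope at infinity of $\M_{\alpha}$ along this ray with the Donaldson-Futaki invariant $\DF_p(\X,\L)$. Boundedness of $\M_\alpha$ at a twisted cscK metric then forces $\DF_p(\X,\L) \geq 0$. Upgrading this to uniform K-stability uses that the twisted entropy dominates the minimum norm $\|(\X,\L)\|_m$, in the spirit of Boucksom-Hisamoto-Jonsson and Dervan.

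For the hard direction one must produce a twisted cscK metric from uniform K-stability. Pass to the non-Archimedean formalism and show that uniform K-stability of $p$ implies coercivity of $\M_\alpha$ on $\H$, where the discrete automorphism hypothesis rules out flat directions. Given coercivity, invoke the Chen-Cheng a priori estimates, extended to accommodate the smooth semipositive twist $p^*\alpha$, together with a continuity method in the twisted scalar curvature equation. The principal obstacle is deducing coercivity of $\M_\alpha$ from uniform K-stability of the map: this is the step corresponding to ``K-stability implies existence'' in the absolute setting, and relies on recent deep work in non-Archimedean pluripotential theory. Here one must additionally identify the non-Archimedean twist contribution with the intersection-theoretic term $\L^n \cdot p^*T$ appearing in $\DF_p$, and show that $\|(\X,\L)\|_m$ controls the non-Archimedean $J$-functional after subtracting the twist. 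A reassuring special case is the canonically polarised setting $L = K_X + D + p^*H$, where the conjecture should follow from existing results on twisted K\"ahler-Einstein metrics together with Proposition \ref{prop:slcmap}.
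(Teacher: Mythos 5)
The statement you are attempting is stated in the paper as a \emph{conjecture} (the analogue of the Yau--Tian--Donaldson conjecture for maps); the paper offers no proof of it and explicitly defers the existence direction to future work, so there is no proof in the paper to compare against, and your proposal must be judged as a research programme. As such it is a reasonable outline, but it has a genuine gap at exactly the point where the conjecture is open: the step ``uniform K-stability of $p$ implies coercivity of the twisted Mabuchi functional'' is not a known result you can invoke --- it is the entire content of the hard direction, and the corresponding statement is open even in the absolute case $Y=\{pt\}$ for a general polarised manifold (the settled cases being essentially the Fano case with $L=-K_X$ and a few variants). Writing that this ``relies on recent deep work in non-Archimedean pluripotential theory'' does not close the gap; no existing result deduces coercivity from uniform K-stability with respect to test-configurations in this generality, twisted or otherwise. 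You should present this step as the conjecture itself rather than as a deducible lemma.

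There is also a subtlety in the ``easy'' direction that you pass over too quickly. Since $\alpha$ lives on $Y$ and is pulled back, $p^*\alpha$ is in general only semipositive and is degenerate along the fibres of $p$. The paper notes (citing Theorem 1.1 of the first author's work on twisted K-stability) that existence of a $p^*\alpha$-twisted cscK metric yields K-\emph{semi}stability, and yields \emph{uniform} K-stability only when $p^*\alpha$ is positive, i.e.\ essentially when $p^*T$ is ample. Your claim that ``the twisted entropy dominates the minimum norm'' so that one can upgrade to uniform stability is not justified when the twist is merely semipositive: the strict positivity that would produce the coercivity constant $\epsilon$ is precisely what degenerates along the fibres. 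Two smaller points: the paper's notion of K-stability for maps requires the test-configuration itself to carry a morphism to $Y$ extending $p$, and it is only via the corollary following Theorem \ref{thm:factorization} that this agrees, at the level of \emph{uniform} stability, with uniform twisted K-stability with respect to $p^*T$ --- you use this identification implicitly and should flag it; and your ``reassuring special case'' is consistent with Proposition \ref{prop:slcmap}, but that proposition is purely algebro-geometric and does not by itself supply the twisted K\"ahler--Einstein metric on a semi-log canonical pair.
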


The analogous conjecture when the map $p$ has automorphisms is more subtle; in this case the most likely candidate for the stability notion is some form of uniform or filtration K-polystability.

In the Fano case, which means that $L=-K_X-p^*T$ is ample, it should even be true that the existence of a $p^*\alpha$-twisted cscK metric is equivalent to $K$-stability of $p$ (possibly allowing $p$ to develop some singularities in the central fibre, see Remark \ref{rmk:twistedKstability}).  We expect that this can be proved by following the classical continuity approach of Aubin \cite{Aubin-reduction} executed by Datar-Sz\'ekelyhidi \cite{SzekelyhidiDatar-Kahler}, and plan to return to this topic in the future.

The knowledgeable reader will notice a similarity between our definition of K-stability for maps and the definition of twisted K-stability \cite{RD-twisted}.  But we emphasise that whereas it is the case that $DF_p$ is precisely the twisted Donaldson-Futaki invariant with respect to $p^*T$, in our definition of K-stability we are demanding that the total space of the test-configuration comes with a morphism to $Y$ extending $p$ (the reader is referred to Remark \ref{rmk:twistedKstability} for further discussion on this point).    Using this twisted point of view, we can observe that it is already known that the easy direction of Conjecture \label{YTD}  has almost been proved in \cite[Theorem 1.1]{RD-twisted}, namely that the existence of such a metric implies K-semistability (and even uniform K-stability if $p^*\alpha$ is positive).

Through the link with twisted cscK metrics, one also obtains a stability interpretation of the continuity method. For instance, when $X$ is a Fano manifold with $\alpha\in c_1(X)$ arbitrary, following Aubin's continuity method Sz\'ekelyhidi \cite{GS-lower} defines the greatest lower bound on the Ricci curvature as \begin{align*} R(X) &:= \sup \{0<t<1: \exists \ \omega_t \in c_1(X) \textrm{ with } \Ric{\omega_t} > t\omega_t \}, \\ &= \sup \{0<t<1:  \exists \ \omega_t \in c_1(X) \textrm{ with } \Ric{\omega_t} = t\omega + (1-t)\alpha \}.\end{align*}  It follows from \cite{RD-twisted, GS-lower} that$$R(X) = \sup_{\{k\in \N \textrm{ and } t \in (0,1]\}}\left\{ p: (X,-K_X) \lhook\joinrel\xrightarrow{-kK_X} \left(\pr^{N_k}, \scO_{\pr}\left(\frac{1-t}{k}\right)\right) \textrm{ is stable} \right\}.$$ Similarly, for a general polarised variety $(X,L)$, stability of the Kodaira embeddings using $L^k$ are related to the continuity method for cscK metrics discussed by Chen \cite{XC-continuity} and the analogue of $R(X)$ discussed by Chen \cite[Definition 1.16]{XC-continuity} and Hashimoto \cite[Section 1.3]{YH}.

\subsection{Comparison with other work}
K-stability, in the absolute case, was introduced by Tian \cite{Tian-hypersurface,GT-97} in the Fano case building on the invariant of Futaki \cite{Fut-obs}, and was later generalised to include more general polarisations and singularities by Donaldson \cite{SD-toric}.  The intersection-theoretic approach that we take is due to Odaka \cite{odaka-disc} and Wang \cite{XW}.

While the notion of a K-stable map we introduce is new, the analytic counterpart of twisted cscK metrics has appeared in previous work. Apart from their use in Aubin's continuity method \cite{Aubin-reduction}, these metrics originally appeared in the work of Fine \cite{JF} and Song-Tian \cite{ST}. Theorem \ref{thm:fibrationsintro} is an algebraic converse to the result of Fine, who proves that if $(B,L_B)$ admits a unique twisted cscK metric,  where the twisting is determined by the fibration,  and the fibres $(X_b,L_{X_b})$ all admit a unique cscK metric, then $(X,mf^*L_B + L_X)$ admits a cscK metric for all $m \gg 0$. 

Song-Tian prove that under certain regularity hypotheses, if $K_X$ is semiample, then the resulting Iitaka fibration $X\to X_{can}$ is realised by the unnormalised K\"ahler-Ricci flow on $X$. That is, if  $\omega_t$ is a family of K\"ahler metrics induced from the K\"ahler-Ricci flow, one has Gromov-Hausdorff convergence $(X,\omega_t) \to (X_{can},\omega_{can})$, where $\omega_{can}$ is a twisted K\"ahler-Einstein metric with respect to the pullback of a Weil-Petersson type metric on the moduli space of Calabi-Yau varieties, and with certain cone singularities along the divisors encoding the multiple fibres of the fibration. This fits in well with what we mention above, that the end product of the minimal model program for non-uniruled varieties has a natural stability interpretation using stable maps.

The analytic analogue of Theorem \ref{thm:props-intro} $(i)$ is due to Hashimoto and Zeng independently \cite[Theorem 1.2]{YH}\cite[Theorem 1.1]{YZ}, who prove the existence of twisted cscK metrics with a large twist. Hashimoto also proves a more general result using critical points of the J-flow, which is the counterpart to our more general Theorem \ref{thm:j-stab}, which uses J-stability.  In Theorem \ref{thm:j-stab}, using J-stability we also prove an algebraic converse to Hashimoto's result. We remark that in Theorem \ref{thm:uniform-kodaira} we are also able to give a uniform $k$ for flat families of varieties for which the Kodaira embedding is a stable map, provided one embeds using certain adjoint bundles. It would be very interesting, but challenging, to prove a similar statement for twisted cscK metrics.

The first stability notion for twisted cscK metrics is due to Stoppa \cite{JS-twisted}, who proves that the existence of such a metric implies a twisted version of slope semistability. Stoppa also provides a moment map interpretation for the existence of twisted cscK metrics \cite[Section 2]{JS-twisted}; it would be interesting to give a moment map interpretation for twisted cscK metrics which is closer to the point of view of stable maps. 

Proposition \ref{prop:slcmap} (from \cite{RD-twisted}) is proved in a similar way to Odaka's technique in the absolute case \cite{odaka-calabi,odaka-disc}. 
 
\subsection{Acknowledgements} The authors would like to thank Giulio Codogni, Kento Fujita and Jacopo Stoppa and Gabor Sz\'ekelyhidi for helpful discussions. The first author especially thanks Roberto Svaldi and Chenyang Xu for birational advice. We also thank the referee for their comments.

\subsection{Notation}
We work throughout over the complex numbers. We often mix multiplicative and additive notation for line bundles, especially when computing intersection numbers. We often have a map of varieties or schemes $p: (X,L)\to (Y,T)$, which we shall abbreviate to $p$. When one has a natural map $f: Z\to W$ of varieties or schemes, the pullback of  a line bundle $H$ on $W$ to $Z$ is often simply written as $H$.

\section{Stability notions for maps}\label{sec:definitions}

\subsection{K-stability for maps}\label{sec:kstabilitymaps}

Suppose that $X,Y$ are schemes with $X$ projective and equidimensional of dimension $n$, that $L,T$ are line bundles on $X$ and $Y$ respectively with $L$ ample, and $p:X\to Y$ is a morphism.  We shall write this data as
$$p:(X,L)\to (Y,T).$$

\begin{definition}(Twisted-slope) For a $\mathbb Q$-line bundle $T'$ on $X$ the \emph{twisted slope} is defined to be $$\mu_{T'}(X,L) = \frac{-(K_X+T').L^{n-1}}{L^n}$$
where $K_X$ denotes the degree two part of the singular Todd class as defined by Fulton \cite[p354]{fulton-book}. In particular if $X$ is a normal variety then $K_X$ can be replaced with the canonical Weil-divisor.   When $T'=0$ we abbreviate this to $\mu(X,L)$, and given a morphism $p:(X,L)\to (Y,T)$ we abbreviate this to 
$$\mu_p = \mu_p(X,L) = \mu_{p^*T}(X,L) = -\frac{(K_X+p^*T).L^{n-1} }{L^n}.$$
\end{definition}

\begin{remark}
We warn the reader that our convention differs from the twisted slope in \cite[p4735]{RD-twisted} in which $T'$ is replaced by $2T'$, and by the definition of the slope in \cite[Equation (1.5)]{RT} by a factor of $n/2$.
\end{remark}

\begin{definition}\label{def:testconfiguration}
A \emph{test-configuration} for  $p: (X,L)\to (Y,T)$ is a scheme $\X$ together with 
\begin{enumerate}[(i)]
\item a flat map $\pi: \X \to \pr^1$,
\item a relatively ample line bundle $\L \to \X$,
\item a $\C^*$-action on $\X$ lifting to $\L$ and covering the natural action on $\pr^1$,
\item an equivariant map $p:\X\to Y$ extending the map $p:X\to Y$ on the general fibre, with $Y$ given the trivial action,
\item a $\C^*$-equivariant isomorphism $\pi^{-1}(\pr^1 \backslash 0) \cong (X\times \C,L^r)$, where $\C^*$-acts trivially on $X$ and in the natural way on $\C$. 
\end{enumerate} We call $r$ the \emph{exponent} of the test-configuration. For a \emph{semi-test-configuration} we merely require that $\L$ is relatively \emph{semi}-ample.  Abusing notation, we shall denote a test-configuration by $p:(\mathcal X,\mathcal L)\to (Y,T)$.
\end{definition}

\begin{definition}[Donaldson-Futaki invariant] Let $p: (\X,\mathcal L)\to (Y,T)$ be a test-configuration of exponent $r$. Then the \emph{Donaldson-Futaki} invariant is defined to be
\begin{equation}\DF_p(\X,\L) = \frac{n}{n+1}\mu_p(X,L^r)\L^{n+1} + \L^n.(K_{\X/\pr^1} + T)\label{eq:DFintersection}
\end{equation}
where $K_{\X}$ is the degree two part of the singular Todd class of $\X$ \cite[p354]{fulton-book}.  Again, if $\mathcal X$ is a normal variety then it can be replaced with the canonical Weil-divisor.\end{definition}

\begin{remark}
The Donaldson-Futaki invariant can be defined in at least two other ways.  One way is to use asymptotics of Hilbert and weight polynomials, as we will discuss below in Section \ref{sec:GIT}.  Another, used by many including the authors \cite[Section 2.1]{Dervan-Ross}, is to consider only normal varieties $X$ and to require that the total space $\mathcal X$ of a test-configuration also be normal to ensure the relative canonical divisor $K_{\mathcal X/\mathbb P^1}$ exists (say as a Weil-divisor) so that the intersection-theoretic quantity on the right-hand-side of \eqref{eq:DFintersection} is defined. 
\end{remark}

Before defining K-stability, we need to know what it means for a test-configuration to be trivial. Fix an (equivariant) resolution of indeterminacy of the natural birational map $f:(X\times\pr^1,L)\dashrightarrow (\X,\scL)$ as follows.
\begin{equation}\label{resolution}
\begin{tikzcd}
\Y \arrow[swap]{d}{q} \arrow{dr}{} &  \\
X\times\pr^1 \arrow[dotted]{r}{} & \X
\end{tikzcd}
\end{equation}

\begin{definition}[Dervan] \cite[Definition 2.5, Remark 3.11]{RD-twisted}\label{lem:minimum} The {minimum norm} is defined to be $$\|(\X,\L)\|_m = \L^n.q^*L - \frac{nr^{-1}}{n+1}\L^{n+1},$$
which is easily checked to be independent of choice of resolution. 
\end{definition}

\begin{remark}The minimum norm was independently introduced by Boucksom-Hisamoto-Jonsson \cite[Definition 7.6]{BHJ}, who called it the ``non-Archimedean $(I-J)$ functional''.  The same quantity also appeared in the work of Lejmi-Sz\'ekelyhidi in a somewhat different situation \cite[p416]{Lejmi-Sz}. The definition is motivated by an analogous functional on the space of K\"ahler metrics. However, one can also show that it is Lipschitz equivalent to the $L^1$-norm introduced by Donaldson \cite[p470]{SD-lower}, see \cite[Theorem 7.9]{BHJ} or \cite[Theorem 1.5]{RD-relative} for an analytic proof when $X$ is a smooth variety. 
\end{remark}

The justification for the use of the word ``norm'' is as follows. Note that the normalisation of a test-configuration for $X$ is a test-configuration for the normalisation of $X$.

\begin{lemma} Assume $X$ is a variety.  A test-configuration normalises to the trivial test-configuration if and only if its minimum norm is zero. \end{lemma}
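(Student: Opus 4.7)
The easy direction is computational: for the trivial test-configuration $(X\times\pr^1,\pi_1^*L^r)$ (possibly twisted by $\pi_2^*\O_{\pr^1}(k)$), a direct expansion using $(\pi_1^*L)^{n+1}=0$ on $X\times\pr^1$ yields $\L^{n+1}=(n+1)r^n k\, L^n$ and $\L^n\cdot q^*L=nr^{n-1}k\,L^n$, from which $\|(\X,\L)\|_m=0$ follows immediately.

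For the converse, the first move is to reduce to the normal case. Normalising $\X$ preserves the minimum norm (the defining intersection numbers pull back cleanly through a birational morphism of normal varieties via the projection formula), and of course the conclusion is about the normalisation anyway. So assume $\X$ and $X$ are normal. Work on the resolution $\Y$ of diagram \eqref{resolution}, with birational morphisms $f:\Y\to X\times\pr^1$ and $g:\Y\to\X$, and set $H:=f^*\pi_1^*L$ and $\widetilde{\L}:=g^*\L$. Since $\widetilde{\L}$ and $rH$ restrict to $L^r$ on every non-central fibre (using the trivialisation of the test-configuration away from $0$), the difference $D:=\widetilde{\L}-rH$ is a $\Q$-Cartier divisor supported on the central fibre $\Y_0$. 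Note also that $H^{n+1}=0$, as $H$ is pulled back from the $n$-dimensional variety $X$.

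The main step is to show that $\|(\X,\L)\|_m=0$ forces $D$ to be numerically a rational multiple of $\Y_0$. Substituting $\widetilde{\L}=rH+D$ into the definition of the minimum norm and using $H^{n+1}=0$, one rewrites $r(n+1)\|(\X,\L)\|_m$ as a polynomial expression in the intersection numbers of $\widetilde{\L}$, $H$ and $D$. The key fact, which I expect to be the main obstacle, is that this polynomial is nonnegative with equality if and only if $D$ is numerically proportional to $\Y_0$; this is a Hodge-index / Zariski-lemma statement for vertical divisors on $\Y$ paired with the relatively ample class $\widetilde{\L}$, and is proved in exactly this formulation — identifying the minimum norm with the non-archimedean $(I-J)$ functional — by Boucksom-Hisamoto-Jonsson \cite[Theorem 7.9]{BHJ}. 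Relative ampleness of $\L$ is essential here, since for merely relatively semi-ample $\L$ the equality case fails.

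Once $D\equiv c\,\Y_0$ numerically, the $\Q$-line bundles $\widetilde{\L}$ and $rH$ on $\Y$ differ by a pullback from $\pr^1$, so their relative morphisms over $\pr^1$ coincide. This gives an isomorphism $\X\cong X\times\pr^1$ of $\pr^1$-schemes, and the $\C^*$-equivariance together with the trivialisation on non-central fibres upgrades it to an isomorphism of test-configurations, completing the proof.
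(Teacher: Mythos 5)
Your proof is correct and follows essentially the same route as the paper: the paper's own proof consists precisely of reducing to the normal case via the invariance of the minimum norm under normalisation and then citing \cite[Theorem 4.7]{RD-twisted} and \cite[Corollary B]{BHJ} for that case. You have simply unpacked the easy direction explicitly and sketched the content of the cited Hodge-index/Zariski-lemma argument, deferring the key positivity statement to the same reference the paper relies on.
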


\begin{proof} This is proven in both \cite[Theorem 4.7]{RD-twisted} and \cite[Corollary B]{BHJ} when $X$ itself is normal. But the minimum norm is preserved by normalisation, as is clear from its intersection theoretic definition, hence we can reduce to the normal case.\end{proof}

With this in place, we can define K-stability for maps. 

\begin{definition}[K-stability] We say that $p: (X,L)\to (Y,T)$ is
\begin{enumerate}[(i)]
\item \emph{K-semistable} if $\DF_p(\X,\L)\ge 0$ for all test-configurations for $p$;
\item \emph{K-stable} if $\DF_p(\X,\L)>0$ for all test-configurations for $p$ with $\|(\X,\L)\|_m>0$;
\item \emph{uniformly K-stable} if there exists an $\epsilon>0$ such that for all test-configurations $(\X,\L)$ for $p$ we have $$\DF_p(\X,\L) \geq \epsilon \|(\X,\L)\|_m.$$\end{enumerate} \end{definition}

\begin{remark}[Scaling]\label{rmk:scaling}
A map $p:(X,L)\to (Y,T)$ is K-semistable if and only if for any $m>0$ the map $p:(X,L^{\otimes m}) \to (Y,T)$ is K-semistable (with similar statements for K-stability and uniform K-stability).  For this reason one can allow $L$ to be a $\mathbb Q$-line bundle. The definition also extends to the case when $T$ is a $\Q$-line bundle directly.
\end{remark}

\begin{remark}[Absolute case]\label{rmk:absolute}If $Y$ is a single point then $p:(X,L)\to (Y,T)$ is K-semistable if and only if $(X,L)$ is K-semistable in the usual sense (say as defined by Donaldson \cite[Definition 2.12]{SD-toric}).
\end{remark}

\begin{remark}[Twisted K-stability]\label{rmk:twistedKstability} The quantity $DF_p(\mathcal X,L)$ is precisely the twisted Donaldson-Futaki invariant with respect to the line bundle $p^*T$ 
as considered in \cite{RD-twisted,JS-twisted}. Thus if $(X,L)$ is twisted K-semistable  (resp.\ K-stable or uniformly K-stable) with respect to $p^*T$ then the same is true of $p:(X,L)\to (Y,T)$.   The converse, however, is not immediate since in principle there could be test-configurations for $(X,L)$ that do not come with a morphism to $Y$ extending $p$. However, as we will see in Section \ref{sec:fact}, this is not an issue for uniform K-stability, and thus $(X,L)$ is uniformly twisted K-stable with respect to $p^*T$ if and only if $p$ is uniformly K-stable.  To fix ideas we will in this paper only consider this stronger notion of K-stability (in which we require the map to extend to the central fibre).  However it may turn out in the future that one wants to relax this slightly (say by allowing the function to obtain some singularities) which may well occur when one wishes to relate this with canonical K\"ahler metrics.

%There are situations in which the restriction to test configurations which admit a map to $Y$ is not especially natural, for example a product test configuration for $X$ (generated by a one-parameter subgroup of $\Aut(X,L)$) does not produce a test configuration for $p: (X,L)\to (Y,T)$ in general. Thus in some situations it may be preferable to relax the assumption of the existence of the map and use the twisted Donaldson-Futaki invariant, which makes sense without assuming the existence of the map.

It is worth pointing out that in \cite{RD-twisted} (which was motivated by \cite{Lejmi-Sz}), the corresponding notion of ``twisted Hilbert stability'' does not appear to be a genuine GIT notion, instead it arises as a sort of stability notion for the linear system $|p^*T|$. Somewhat surprisingly however, the resulting numerical invariant is equal to the one that appears in the GIT setup we will consider below.   This notion of Hilbert stability for linear systems is described in more detail in \cite[Section 4]{Dervan-Keller}.

\end{remark}

\begin{remark}\label{filtrations}It is clear that $$\textrm{uniformly K-stable} \Rightarrow \textrm{K-stable}\Rightarrow \textrm{K-semistable}.$$ Even when $Y$ is a point, there are examples of K-semistable varieties which are not K-stable. We expect that K-stability is not equivalent to uniform K-stability, though no such example is known. Moreover, uniform K-stability should be the correct moduli notion for higher dimensional maps, at least for forming \emph{separated} moduli. Moreover, we expect that uniform K-stability is a Zariski open condition in flat families of maps, which is an essential condition for forming moduli spaces. Forming a \emph{proper} moduli space appears to be much more subtle.

A variant of these notions would be a notion of \emph{filtration} K-stability for maps, extending the definition of Sz\'ekelyhidi in the absolute case \cite[Definition 4]{sz-filtrations}. However in the absolute case, it is not hard to see that uniform K-stability implies filtration K-stability. Since this is only a condition on the norms, this would presumably hold in the case of maps as well. Thus the various examples of uniformly K-stable maps we provide would likely also be examples of filtration K-stable maps. \end{remark}

The extension to pairs is as follows.  Given a $\Q$-Weil divisor $D$ on $X$, we will denote by $\D$ the closure $\overline{\C^*.D}\subset\X$, which is a $\Q$-Weil divisor.
\begin{definition}[Log K-stability] \label{def:logKstability} The \emph{log Donaldson-Futaki invariant} of $p: ((\X,\D),\L)\to (Y,T)$ is the intersection number $$\DF_p((\X,\D);\L) = \frac{n}{n+1}\mu_{D+p^*T}(X,L)\L^{n+1} + \L^n.(K_{\X/\pr^1}+\D+p^*T).$$ 
We then define log K-semistability, log K-stability and uniform log-K-stability exactly as before. 
\end{definition}
In the absolute case, log K-stability is due to Donaldson \cite[Section 6]{SD-cone}. 

\begin{remark}[Singularities]\label{rmk:singularities}
Using the relationship with twisted K-stability, we know from \cite[Theorem 3.28]{RD-twisted} that if $p:((X,D),L)\to (Y,T)$ is $K$-semistable then $(X,D)$ has semi-log canonical singularities (see Definitions \ref{def:lc} and \ref{def:slc}). 
\end{remark}

\begin{remark}[Calabi-Yau and General Type maps]\label{rmk:generaltype}
 Again using the terminology of Definitions \ref{def:lc} and \ref{def:slc}, from the relationship with twisted K-stability, \cite[Theorem 1.2, Theorem 3.28]{RD-twisted} tells us that if $p:((X,D),L)\to (Y,T)$ is a Calabi-Yau map (by which we mean $K_X+p^*T$ is numerically trivial, with $L$ arbitrary) then $p$ is K-stable (or uniformly K-stable) if and only if $(X,D)$ has Kawamata log terminal singularities.  Moreover if $p:((X,D);L))\to (Y,T)$ is is canonically polarised (by which we mean $L=K_X+D+p^*T$ is ample) then $p$ is K-stable (or uniformly K-stable, K-semistable) if and only if $(X,D)$ has semi-log canonical singularities.
\end{remark}

\subsection{Hilbert Stability for maps}\label{sec:GIT}

We start by recalling the Hilbert-Mumford criterion from Geometric Invariant Theory (GIT).   Suppose $Z\subset \pr^N$ is a projective scheme that is invariant under the action of a reductive subgroup $G\subset SL(N+1)$.  Fixing $z\in Z$, for each one-parameter subgroup $\lambda \hookrightarrow G$ there is a limit $z_0:=\lim_{t\to 0}\lambda(t).z$ in $Z$.  Then $z_0$ is fixed by $\lambda$, so picking a non-zero point $\hat z_0\in \C^{N+1}$ lying on the line above $z_0$, the one-parameter subgroup $\lambda$ acts as $$\lambda(t).\hat{z}_0 = t^{-\mu(z,\lambda)}\hat{z}_0$$ for some integer $\mu(z,\lambda)$ called the \emph{Mumford weight}.    When $\lambda\hookrightarrow GL(N+1)$ we compose with an action that scales $\mathbb C^{N+1}$ with some weight to obtain a  rational one-parameter subgroup $\hat{\lambda}\hookrightarrow SL(N+1)$ that has the same action on $Z$, and set
\begin{equation}\label{eq:HMGL}
\mu(\lambda,z) = \mu(\hat{\lambda},z).
\end{equation}
The Hilbert-Mumford numerical criterion \cite[Theorem 2.1]{GIT} states that the point $z\in Z$ is
\begin{enumerate}[(i)]
\item \emph{semistable}  if $\mu(z,\lambda)\geq 0$ for all one-parameter subgroups $\lambda$,
\item \emph{stable}   if $\mu(z,\lambda)> 0$ for all non-trivial one-parameter subgroups $\lambda$,
\item \emph{polystable}  if $\mu(z,\lambda)\geq 0$ for all one-parameter subgroups $\lambda$, with equality if and only if $\lambda$ fixes $z$.
\end{enumerate}
Roughly speaking, the GIT quotient $Z\sslash G$ parameterises polystable orbits. 
 
We apply this now to define Hilbert stability for maps.  Let $p:(X,L)\to (Y,M)$ be a morphism between polarised varieties and assume that $M$ is very ample.  Let $n=\dim X$, and $r$ be large enough so that $L^r$ is very ample.   Let $V_r$ be a fixed vector space of dimension $h^0(X,L^r)$, let $W=H^0(Y,M)$  and fix an isomorphism $H^0(X,L^r)\simeq V_r$.    Then the graph $\Gamma_p$ of $p$ is a subscheme
$$\Gamma_p \subset X\times Y \subset \pr(V_r)\times\pr(W) \subset \pr(V_r\otimes W).$$   
Thus $\Gamma_p$ defines a point  in an appropriate Hilbert scheme $$[\Gamma_p]\in \Hilb = \Hilb(\mathbb P(V_r\otimes W))$$ of $\pr(V_r\otimes W)$.    There is ambiguity given by possibly different choice of isomorphism $H^0(X,L^r)\simeq V_r$, and so we are interested in the orbit of $[\Gamma_p]$ under the natural $\GL(V_r)$ action. (Observe that as we are thinking of $Y$ as fixed, there is no additional action coming from the automorphism group of $W$).  

\begin{remark}
When $\dim X=1$ this is the setup considered by Baldwin-Swinarski  \cite[Section 3]{BS} who use Geometric Invariant Theory to produce the moduli space of stable maps from curves.
\end{remark}

We recall briefly the construction of the Hilbert scheme.  For sufficiently large integer $k$ consider the exact sequence $$ 0\to H^0(\pr(V_r\otimes W),\scI_{\Gamma_p}(k)) \to S^k (V_r\otimes W) \to H^0(\Gamma_p, \scO_{\mathbb P(V_r\otimes W)}(k)|_{\Gamma_p})\to 0$$ where $\scI_{\Gamma_p}$ is ideal sheaf defining $\Gamma_p$.    We think of this as a point in the Grassmannian of the fixed vector space $S^k(V_r\otimes W)$, and $\Hilb(V_r\otimes W)$ as the locus inside this Grassmannian that parameterises such subschemes.  So by the Pl\"ucker embedding
\begin{equation}[\Gamma_p]\in \Hilb(V_r\otimes W) \subset \Grass(S^k(V_r\otimes W))\subset \mathbb P(\Lambda^h S^k (V_r\otimes W))=:\mathbb P\label{eq:hilbertscheme}\end{equation}
where $h:=\dim H^0(\pr(V_r\otimes W),\scI_{\Gamma_p}(k))$.  Clearly the natural action of $GL(V_r)$ on $\mathbb P$ preserves $\Hilb(V_r\otimes W)$, and thus we are in precisely the setup of the Hilbert-Mumford criterion described above.

\begin{definition}[Hilbert-stability of a map]
We say that the map $p$ is \emph{Hilbert-stable at level $r$} if for all sufficiently large $k$, the point $[\Gamma_p]$ in \eqref{eq:hilbertscheme} is stable with respect to the action of $GL(V_r)$.  We say $p$ is \emph{asymptotically Hilbert-stable} if it is Hilbert-stable at level $r$ for all $r$ sufficiently large. 
\emph{(Asymptotic) Hilbert-semistability} and \emph{Hilbert-polystability} of $p$ are defined similarly.  
\end{definition}

\subsection{Donaldson-Futaki invariant as a leading term of a Mumford-weight}
Suppose now that $p:(\mathcal X,\mathcal L)\to (Y,M)$ is a test-configuration for $p$.  It is not hard to see that the restriction of the test-configuration over $\mathbb C\subset \mathbb P^1$ gives rise to a one-parameter subgroup $\lambda\hookrightarrow GL(V_r)$ whose limit $\Gamma_{p_0} = \lim_{t\to 0} \lambda(t).\Gamma_p$ is the graph of the map $p:\mathcal X_0\to Y$, and thus $\Gamma_{p_0}$ is equivariantly isomorphic to $\mathcal X_0$.  (Conversely such a one-parameter subgroup clearly defines a test-configuration, see for example \cite[Proposition 3.7]{RT} in the absolute case $Y=\{pt\}$).

To discuss this further it is useful to have some notation.   So set \begin{align*} h(r,k) &= \dim H^0(X, L^{rk}\otimes M^{k}) = \dim H^0(\X_0,\L_0^k\otimes M^k), \\  w(r,k) &= \wt(H^0(\X_0,\L_0^k\otimes M^k)), \\  \hat{h}(r) &= \dim H^0(X, L^{r})= \dim H^0(\X_0, \L_0^{r}), \\  \hat w(r) &= \wt(H^0(\X_0,\L_0^r)), \end{align*} where the equality of the Hilbert polynomials arises from flatness and $\wt(H^0(\X_0,\L_0^k\otimes H^k)$ denotes the total weight of the $\C^*$-action on $H^0(\X_0,\L_0^k\otimes M^k)$ induced from the action on $(\X_0,\L_0\otimes M)$.  Also set

$$\tilde{w}(r,k):= w(r,k)\hat  h(r) - \hat w(r)kh(r,k)$$

\begin{lemma} 
The Mumford-weight of the one-parameter subgroup $\lambda\hookrightarrow GL(V_r)$ induced by the test-configuration $(\mathcal X,\mathcal L)\to (Y,M)$ is given by
$$ \mu([\Gamma_p],\lambda) = \frac{\tilde{w}(r,k)}{\hat{h}(r)}.$$
\end{lemma}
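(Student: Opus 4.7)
The plan is to compute the $\lambda$-weight on the Pl\"ucker coordinate of $[\Gamma_{p_0}] = \lim_{t\to 0}\lambda(t).[\Gamma_p]$ directly from the exact sequence defining the Hilbert point, and then to adjust for the passage from $GL(V_r)$ to $SL(V_r)$ using \eqref{eq:HMGL}. Under the identification $V_r \cong H^0(\X_0,\L_0)$ as $\lambda$-representations, the action has total weight $\hat w(r)$ on a space of dimension $\hat h(r)$, and $W = H^0(Y,M)$ carries the trivial $\lambda$-action since $Y$ is fixed. Under the Pl\"ucker embedding \eqref{eq:hilbertscheme}, the limit $[\Gamma_{p_0}]$ is the line $\Lambda^h K \subset \Lambda^h S^k(V_r\otimes W)$, where $K := H^0(\pr(V_r\otimes W),\scI_{\Gamma_{p_0}}(k))$, and since $\Lambda^h K$ is one-dimensional the $\lambda$-weight on it equals the total weight of $\lambda$ on $K$.

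The first main calculation uses the $\lambda$-equivariant exact sequence
$$0\to K \to S^k(V_r\otimes W) \to H^0(\Gamma_{p_0},\O_{\Gamma_{p_0}}(k))\to 0,$$
valid for $k\gg 0$, which gives $\wt(K) = \wt(S^k(V_r\otimes W)) - \wt(H^0(\Gamma_{p_0},\O_{\Gamma_{p_0}}(k)))$. Under the identification $\Gamma_{p_0}\cong \X_0$ the bundle $\O_{\pr(V_r\otimes W)}(1)$ restricts to $\L_0\otimes p^*M$, so the rightmost term is $H^0(\X_0,\L_0^k\otimes M^k)$, with $\lambda$-weight $w(r,k)$ and dimension $h(r,k)$ by definition. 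For the middle term, since $\lambda$ acts trivially on $W$, a short symmetry argument (the average weight of a degree-$k$ monomial in $\dim(V_r\otimes W)=\hat h(r)\dim W$ variables of total weight $\hat w(r)\dim W$ is $k\hat w(r)/\hat h(r)$) yields
$$\wt\bigl(S^k(V_r\otimes W)\bigr) = \frac{k\,\hat w(r)}{\hat h(r)}\,\dim S^k(V_r\otimes W).$$

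The second step normalises from $GL$ to $SL$. By \eqref{eq:HMGL}, $\hat\lambda$ is obtained from $\lambda$ by rescaling each element of $V_r$ by $t^{-\hat w(r)/\hat h(r)}$; this shifts the weight on every element of $S^k(V_r\otimes W)$ by $-k\hat w(r)/\hat h(r)$, and hence the weight on the line $\Lambda^h K$ by $-hk\hat w(r)/\hat h(r)$, where $h = \dim S^k(V_r\otimes W) - h(r,k)$. The $\dim S^k(V_r\otimes W)$ contributions from the two steps cancel cleanly, and the sign convention $\mu = -\wt$ (from $\lambda(t).\hat z_0 = t^{-\mu}\hat z_0$) gives
$$\mu([\Gamma_p],\lambda) = w(r,k) - \frac{k\,\hat w(r)\,h(r,k)}{\hat h(r)} = \frac{\tilde w(r,k)}{\hat h(r)},$$
as claimed.

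The main obstacle is nothing conceptual but purely bookkeeping: one has to keep track of Mumford's sign convention, of the $SL$-rescaling, and of the fact that $W$ carries the trivial action (which is what makes the $\dim S^k(V_r\otimes W)$ contribution cancel between the ambient weight and the $SL$-shift on $\Lambda^h K$); once these are lined up correctly the identity is immediate from the symmetric-power weight formula and the exact sequence.
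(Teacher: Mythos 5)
Your proof is correct and follows essentially the same route as the paper: identify the Pl\"ucker line via the defining exact sequence of the Hilbert point, identify the quotient with $H^0(\X_0,\L_0^k\otimes M^k)$, and normalise from $GL(V_r)$ to $SL(V_r)$ by the shift $\alpha=-\hat w(r)/\hat h(r)$. The only cosmetic difference is that you compute on the subspace $K$ (so you need the symmetric-power weight formula for $\wt(S^k(V_r\otimes W))$, which then cancels against the $SL$-shift), whereas the paper works with $\Lambda^{\max}H^0(\Gamma_{p_0},\O(k))\otimes\Lambda^{\max}S^k(V_r\otimes W)^*$ and observes that $\hat\lambda$ acts with zero weight on $\Lambda^{\max}S^k(V_r\otimes W)$, so that term drops out immediately.
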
 
\begin{proof}
By the definition of the Pl\"ucker embedding used in \eqref{eq:hilbertscheme}, the line in $\Lambda^h S^k(V_r\otimes W)$ over the point $[\Gamma_{p_0}]$ is naturally isomorphic to
$$\Lambda:= \Lambda^{max} H^0(\Gamma_{p_0},\mathcal O_{\mathbb P(V_r\otimes W)}(k)|_{\Gamma_{p_0}}) \otimes  \Lambda^{max} S^k(V_r\otimes W)^*.$$
Under the idenfitication between $\Gamma_{p_0}$ and $\X_0$ the line bundle $\mathcal O_{\mathbb P(V_r\otimes W)}(1)$ pulls back to $L^r\otimes p^*M$.  Hence
\begin{equation}\Lambda \simeq \Lambda^{max} H^0(\X_0,\L_0^{k}\otimes p^* M^k) \otimes  \Lambda^{max} S^k(V_r\otimes W)^*\label{eq:idenfiticationHMbundle}.\end{equation}
We can compose $\lambda$ with the action that scales the vector space $V_r$ by some weight $\alpha \in\Q$ to get a new rational one-parameter subgroup $\hat{\lambda}\hookrightarrow SL(V_r)$. But the induced weight of $\hat{\lambda}$ on $V_r$ is precisely $\hat{w}(r) + \alpha \hat{h}(r)$, and thus to ensure $ \hat{w}(r) + \alpha \hat{h}(r) =0$ we must have
$$ \alpha := -\frac{\hat{w}(r)}{\hat{h}(r)}.$$

Observe that this in particular implies that $\hat{\lambda}$ factors through both $SL(\Lambda^h S^k(V_r\otimes W))$ and moreover acts with zero weight on $\Lambda^{max} S^k(V_r\otimes W)$.  Hence using \eqref{eq:HMGL} and then \eqref{eq:idenfiticationHMbundle}
$$\mu(\lambda,[\gamma]) = \mu(\hat{\lambda},[\gamma]) = w(r,k) + \alpha k h(r,k) = \frac{\tilde{w}(r,k)}{\hat{h}(r)},$$ 
where the factor of $k$ arises as if one adds a constant $c$ to the weights of a $\C^*$-action on a general $(Z,L_Z)$, this changes the total weight on $H^0(Z,L_Z^l)$ by $cl$. The result follows.\end{proof}

All of the above quantities are polynomials in $r$ and $k$.  Thus we can write
\begin{equation}\label{eq-chow}\tilde w(r,k) = \sum_{i=1}^{n+1} e_{i}(r)k^i.\end{equation}

It is convenient now to set $M = H^{\otimes 2}$ and consider the map $q: (X,L) \to (Y,H)$ induced from $p: (X,L) \to (Y,M)$ (the factor of two is to make the formulae cleaner).

\begin{proposition}[$DF_p$ as the leading term of a Mumford-weight]\label{prop:DFasleading}
There is an expansion
\begin{equation} e_{n+1}(r) = \DF_{q}(\mathcal X,\mathcal L) r^{2n} + O(r^{2n-1}).\label{eq:DFasleading}
\end{equation}
\end{proposition}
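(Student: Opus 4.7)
The plan is to obtain asymptotic expansions of the four quantities $h(r,k)$, $\hat h(r)$, $w(r,k)$, $\hat w(r)$ via Hirzebruch--Riemann--Roch in ordinary and equivariant form, then substitute them into $\tilde w(r,k)$ and extract the coefficient of $k^{n+1}$ as a polynomial in $r$. Applying HRR on $X$ to $L^{rk}\otimes M^k$ (which has vanishing higher cohomology for $k\gg 0$) and to $L^r$ yields
\begin{align*}
h(r,k) &= \tfrac{(rL + p^*M)^n}{n!}\,k^n + \tfrac{-K_X\cdot(rL + p^*M)^{n-1}}{2(n-1)!}\,k^{n-1} + O(k^{n-2}),\\
\hat h(r) &= \tfrac{L^n}{n!}\,r^n + \tfrac{-K_X\cdot L^{n-1}}{2(n-1)!}\,r^{n-1} + O(r^{n-2}).
\end{align*}
For the weight polynomials I would compactify $\X\to\C$ to $\overline{\X}\to\pr^1$ by gluing the trivial fibre at $\infty$ with trivial $\C^*$-action (working, without loss of generality, at exponent one). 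Writing $\pi_*(\overline{\L}^{rk}\otimes p^*M^k) = \bigoplus_i\scO_{\pr^1}(a_i^{(k)})$, the total $\C^*$-weight at the central fibre equals $-\sum_i a_i^{(k)}$, which by Riemann--Roch on $\pr^1$ is $h(r,k) - \chi(\overline{\X}, \overline{\L}^{rk}\otimes p^*M^k)$. Applying HRR on $\overline{\X}$ and using $K_{\overline{\X}/\pr^1} = K_{\overline{\X}} + 2F$ for $F$ a fibre of $\pi$, one obtains Wang's formula
\begin{align*}
w(r,k) &= \tfrac{(r\overline{\L} + p^*M)^{n+1}}{(n+1)!}\,k^{n+1} - \tfrac{K_{\overline{\X}/\pr^1}\cdot(r\overline{\L} + p^*M)^n}{2n!}\,k^n + O(k^{n-1}),\\
\hat w(r) &= \tfrac{\overline{\L}^{n+1}}{(n+1)!}\,r^{n+1} - \tfrac{K_{\overline{\X}/\pr^1}\cdot\overline{\L}^n}{2n!}\,r^n + O(r^{n-1}).
\end{align*}

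Substituting into $\tilde w(r,k) = w(r,k)\hat h(r) - \hat w(r) k h(r,k)$ and extracting the coefficient of $k^{n+1}$ gives
$$e_{n+1}(r) = \tfrac{(r\overline{\L} + p^*M)^{n+1}}{(n+1)!}\,\hat h(r) - \hat w(r)\cdot\tfrac{(rL + p^*M)^n}{n!},$$
a polynomial in $r$ of degree at most $2n+1$. A direct check shows that the $r^{2n+1}$ coefficients of the two products agree (each equals $\tfrac{\overline{\L}^{n+1} L^n}{(n+1)!\,n!}$) and cancel, so $e_{n+1}(r) = O(r^{2n})$. The $r^{2n}$ coefficient then arises by pairing the $r^{n+1}$ part of the leading $k$-coefficient with the $r^{n-1}$ part of $\hat h$ (respectively of $(rL + p^*M)^n/n!$), and the $r^n$ parts with one another; expanding by the binomial theorem and collecting, the four contributions assemble (up to an explicit positive normalisation involving $L^n$ and factorials) into the intersection-theoretic expression $\tfrac{n}{n+1}\mu_q\overline{\L}^{n+1} + \overline{\L}^n(K_{\overline{\X}/\pr^1} + p^*H) = \DF_q(\X,\L)$.

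The hard part will be the final combinatorial bookkeeping: one must check that the four cross-terms in the $r^{2n}$ coefficient combine to produce the twisted-slope structure $-(K_X + p^*H)L^{n-1}/L^n$ characteristic of $\mu_q$. The choice $M = H^{\otimes 2}$ in the statement is precisely what absorbs the combinatorial factor of $2$ from $\binom{n+1}{1}$ and $\binom{n}{1}$ appearing in the binomial expansions, so that the target data enters correctly through $p^*H$ rather than $p^*M$. A subsidiary point is verifying that Wang's formula applies to the line bundle $\overline{\L}^r\otimes p^*M$ on $\overline{\X}$: this requires relative ampleness, which holds for $r$ sufficiently large since $\L$ is relatively ample and $p^*M$ is nef.
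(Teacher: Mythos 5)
Your proposal is correct and follows essentially the same route as the paper: the identification of the weight polynomials with differences of Euler characteristics on the compactified family (the paper's Lemma \ref{lem-euler}, attributed to Donaldson, Odaka and Wang), asymptotic Riemann--Roch, cancellation of the $r^{2n+1}$ term, and bookkeeping of the $r^{2n}$ coefficient to recover $\DF_q$, with the factor of $2$ in $M=H^{\otimes 2}$ absorbed exactly as you indicate. (One cosmetic point: your prose gives the total weight as $h-\chi$ while your displayed expansions use $\chi-h$, which is the paper's convention; only the latter enters the computation, so nothing propagates.)
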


It is convenient to start with a preliminary statement:

\begin{lemma}\label{lem-euler} For $r,k \gg 0$ have \begin{align*} h(r,k) &= \chi(X,L^{rk}\otimes M^k), \\  w(mr,k) &= \chi(\X,\L^{mk}\otimes M^k) - \chi(X,L^{mrk}\otimes M^k), \\  \hat{h}(r) &= \chi(X, L^{r}), \\  \hat w(mr) &=\chi(\X,\L^m) -  \chi(X, L^{mr}), \end{align*} where the Euler characteristics are computed on the compactified family $\X\to\pr^1$. 
\end{lemma}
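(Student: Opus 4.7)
The plan is to deduce the first and third equalities from Serre vanishing, and the two weight formulas by pushing forward to $\pr^1$ and invoking the equivariant splitting of vector bundles on $\pr^1$.

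For the identities $h(r,k) = \chi(X, L^{rk} \otimes M^k)$ and $\hat h(r) = \chi(X, L^r)$: since $L$ is ample, $L^r$ has vanishing higher cohomology for $r \gg 0$, and $L^{rk} \otimes M^k = (L^r \otimes M)^k$ has vanishing higher cohomology once $r$ is large enough that $L^r \otimes M$ is ample and $k$ is then taken large. In each case $\dim H^0$ equals $\chi$.

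For the weight formulas, I would introduce the $\C^*$-equivariant sheaf $E_k := \pi_*(\L^{mk} \otimes M^k)$ on $\pr^1$. For $k \gg 0$, flatness of $\pi$ together with relative ampleness of $\L$ forces $E_k$ to be locally free with vanishing higher direct images, so Leray yields $\chi(\X, \L^{mk} \otimes M^k) = \chi(\pr^1, E_k)$. The fibre of $E_k$ at $0$ is $H^0(\X_0, \L_0^{mk} \otimes M^k)$, of total $\C^*$-weight $w(mr, k)$, while the fibre at $\infty$ is $H^0(X, L^{mrk} \otimes M^k)$, on which $\C^*$ acts trivially because the test-configuration is trivial over $\pr^1 \setminus \{0\}$. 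Next, any $\C^*$-equivariant vector bundle on $\pr^1$ decomposes as $\bigoplus_j \scO_{\pr^1}(a_j)$ with equivariant structures, and for each such summand the weights at $0$ and at $\infty$ differ by $a_j$; summing gives $\wt(E_k|_0) - \wt(E_k|_\infty) = \deg E_k$. Combined with Riemann--Roch $\chi(\pr^1, E_k) = \deg E_k + \rk E_k$ and $\rk E_k = h(mr, k) = \chi(X, L^{mrk} \otimes M^k)$, this produces
$$w(mr, k) = \deg E_k = \chi(\X, \L^{mk} \otimes M^k) - \chi(X, L^{mrk} \otimes M^k).$$
The analogous formula for $\hat w(mr)$ follows by the same argument applied to $\pi_* \L^m$ (for $m \gg 0$), whose fibre at $0$ is $H^0(\X_0, \L_0^m)$ of weight $\hat w(mr)$ and whose fibre at $\infty$ is $H^0(X, L^{mr})$ with trivial action.

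The main point to check carefully is that the equivariant weights at $0$ and $\infty$ for a summand $\scO_{\pr^1}(a)$ genuinely differ by the degree $a$ (with the correct sign in the paper's conventions for the $\C^*$-action on $\pr^1$); this is a standard calculation, and once it is in place the rest of the argument is an immediate consequence of the Leray spectral sequence and Riemann--Roch on $\pr^1$.
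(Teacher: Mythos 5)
Your proof is correct, and it is essentially the standard argument: the paper itself disposes of the dimension statements by Serre vanishing plus Riemann--Roch and simply cites Donaldson, Odaka and Wang for the weight identities, and the argument in those references is exactly the one you give (pushforward to $\pr^1$, cohomology and base change, triviality of the action over $\infty$, and the identification of the weight difference at the two fixed points with $\deg E_k$ --- equivalently, with the degree of $\det E_k$, which lets one bypass the equivariant Grothendieck splitting if preferred). The one point you flag, the sign in $\wt(E|_0)-\wt(E|_\infty)=\deg E$ for an equivariant line bundle, is indeed the only convention-dependent step and is a routine check.
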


\begin{proof} For the dimensions of the vector spaces, this follows from Riemann-Roch. For the weights, this is  due in various forms to the first author, Donaldson, Odaka and Wang \cite{RD-twisted,SD-lower,odaka-RT,XW}.\end{proof}

\begin{proof}[Proof of Proposition \ref{prop:DFasleading}]
It is clear that $r\mapsto e_{n+1}(r)$ is a polynomial in $r$ of degree at most $2n+1$.  We claim now that the degree $2n+1$ term actually vanishes.  It is clearer to work with $w(mr,k)$, considering $r$ fixed and varying $m$ and $k$. First note that \begin{align*}\tilde w(mr,k) &= w(mr,k)\hat  h(mr) - \hat w(mr)kh(mr,k), \\ &= ( \chi(\X,\L^{mk}\otimes M^k) - \chi(X,L^{mrk}\otimes M^k)) \chi(X, L^{mr})  \\ & \ \ \ \ \ \ \ \ \ \ \ \ \ \  \ \ \ -(\chi(\X,\L^m) -  \chi(X, L^{mr}))\chi(X,L^{mrk}\otimes M^k) \end{align*}by Lemma \ref{lem-euler}. Thus by asymptotic Riemann-Roch and using additive notation $$e_{n+1}(mr) = \frac{(m\L+ M)^{n+1}}{(n+1)!}\chi(X, L^{mr}) - (\chi(\X,\L^{mr}) -   \chi(X, L^{mr}))\frac{(mrL+M)^{n}}{n!}.$$ From this one sees the degree $2n+1$ term in $m$ vanishes, as claimed.

To make the calculation more explicit, and using the variable $l$ for clarity, denote \begin{align*} \hat{h}(l) &= \dim H^0(X,L^l) =  a_0l^n + \hat a_1 l^{n-1} + O(l^{n-2}), \\  \hat w(l) &= \wt(H^0(\X_0,\L_0^l))=  b_0 l^{n+1} + \hat b_1 l^n + O(l^{n-1}). \end{align*} Define also $$ a_q = \frac{L^{n-1}.H}{(n-1)!}, \quad   b_q = \frac{\L^n.H}{n!},$$ which arise in a simple manner from the Hilbert and weight polynomials defining $\tilde w(r,k)$. It is now clear that $e_{n+1}(r)$ admits an expansion $$ e_{n+1}(r) = \frac{ b_0 ( a_1+  a_q)  - ( b_1 +  b_q) a_0}{ a_0} r^{2n} + O(r^{2n-1}),$$ thus it suffices to show that this equals the Donaldson-Futaki invariant of $q: (\X,\L) \to (Y,H)$. But this is immediate by Fulton's asymptotic Riemann-Roch for singular schemes \cite[Corollary 18.3.1 (a)]{fulton-book}, which states that $$\chi(\X,\L^l) = \frac{\L^{n+1}}{(n+1)!}l^{n+1} + \frac{\L^n.K_{\X}}{2n!}l^n + O(l^{n-1}),$$ where $K_{\X}$ is the degree two part of the (singular) Todd class as usual, together with $\L^n.K_{\pr^1} = -2L^n$.

\end{proof}

\begin{remark}The term $e_{n+1}(r)$ governs Chow stability of the map $p$, where one uses GIT with respect to the Chow line bundle on the Hilbert scheme.  This is precisely as in the absolute case, see for example \cite[Theorem 3.9]{RT}.   Thus one can think of the Donaldson-Futaki invariant as the leading order term in asymptotic Chow stability.
\end{remark}

\begin{remark}Equation \eqref{eq:DFasleading} can also be used as the Donaldson-Futaki invariant (thus avoiding  intersection theory).  This is the approach taken by Donaldson \cite[Definition 2.12]{SD-toric} who was the first to define this invariant (in the absolute case) in this level of generality.
\end{remark}

There is an analogous calculation when one has a divisor $D\subset X$, which leads to the log Donaldson-Futaki invariant being the leading order term of certain Mumford-weights, computed on a product of Hilbert schemes (see for example \cite[Theorem 4.9]{Dervan-Keller}). An interesting point is that this relies on $D$ being a divisor; if $Z$ has codimension at least two, then the leading order term of the Mumford weight is unaffected for dimensional reasons. Thus while it makes sense to ask for a map $p: ((X,Z);L) \to (Y,H)$ to be asymptotically Hilbert stable for $Z$ an arbitrary subscheme of $X$, the same question for K-stability  does not unless $Z$ is a divisor.

For completeness, we show that the minimum norm can also be defined in this way.  Choose $r$ such that $L^r$ is very ample. For each $D\in |L^r|$, and each test-configuration $p:\X\to D$ we obtain a test-configuration for $(D,L)$ by setting the total space $\D$ to be the closure of the orbit of $D$, i.e. $D = \overline{\C^*.D}$. As above, we obtain a polynomial  for $k \gg 0$ $$\wt(H^0(\D_0,\L_0^l)) = b_{0,D}l^n+O(l^{n-1}).$$ One can show that $b_{0,D}$ is in fact constant outside a Zariski closed subset of $D$ \cite[Lemma 9]{Lejmi-Sz}, so we set $\tilde b_0$ to equal this general value. 

\begin{lemma}[Minimum norm]\cite[Remark 3.11]{RD-twisted} The minimum norm of $p: (\X,\L)\to (Y,H)$ is given by $$\|(\X,\L)\|_m = \frac{n!}{r}\left (\tilde b_0 - nb_0\right ).$$\end{lemma}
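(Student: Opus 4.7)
The plan is to express the two leading coefficients $b_0$ and $\tilde b_0$ as intersection numbers via equivariant asymptotic Riemann--Roch, and then to match the resulting expression with the intersection-theoretic definition of $\|(\X,\L)\|_m$ from Definition \ref{lem:minimum} by means of a Bertini-type argument that identifies the divisor class of $\D\subset\X$.

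The equivariant Riemann--Roch computation underlying the proof of Proposition \ref{prop:DFasleading} expresses $\hat w(l)=\wt(H^0(\X_0,\L_0^l))$ asymptotically via intersection numbers on the total space of the test-configuration, giving $b_0 = \L^{n+1}/(n+1)!$. Applying the same recipe to $(\D,\L|_\D)$, which is a test-configuration for $(D,L)$ of exponent $r$ and dimension $n-1$, yields $b_{0,D} = (\L|_\D)^n/n! = \L^n\cdot[\D]/n!$ for any $D\in|L^r|$, and in particular $\tilde b_0 = \L^n\cdot[\D]/n!$ for a generic $D$.

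The geometric heart of the proof is the identity $\L^n\cdot[\D] = r\cdot \L^n\cdot q^*L$ for generic $D\in|L^r|$, where $q:\Y\to X\times\pr^1$ is the resolution in \eqref{resolution}. Since $f:X\times\pr^1\dashrightarrow\X$ is a $\C^*$-equivariant isomorphism over $\pr^1\setminus\{0\}$, its indeterminacy locus lies in $X\times\{0\}$ and has codimension at least two in $X\times\pr^1$, so its projection to $X$ has codimension at least one. A standard Bertini argument, using that $L^r$ is very ample, then shows that a generic $D\in|L^r|$ contains no component of this projection, and therefore $D\times\pr^1$ does not contain the $q$-image of any $q$-exceptional divisor. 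This forces $q^*(D\times\pr^1)$ to coincide with its proper transform $\tilde D\subset\Y$, with no exceptional contribution. Meanwhile $\pi^*\D=\tilde D+E_\pi$ with $E_\pi$ supported on $\pi$-exceptional divisors, so the projection formula gives $(\pi^*\L)^n\cdot E_\pi=0$, and combining yields
$$\L^n\cdot[\D] = (\pi^*\L)^n\cdot\tilde D = (\pi^*\L)^n\cdot q^*(D\times\pr^1) = r\cdot\L^n\cdot q^*L.$$

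Putting everything together gives
$$\frac{n!}{r}(\tilde b_0 - n b_0) = \frac{\L^n\cdot[\D]}{r} - \frac{n\L^{n+1}}{r(n+1)} = \L^n\cdot q^*L - \frac{nr^{-1}}{n+1}\L^{n+1} = \|(\X,\L)\|_m,$$
as claimed. The main technical obstacle is the Bertini-type argument of the third paragraph, where one must verify the codimension estimate on the indeterminacy locus of $f$ and ensure that a generic divisor in $|L^r|$ suffices to kill every exceptional contribution to $q^*(D\times\pr^1)$; the remainder reduces to standard Riemann--Roch and the projection formula.
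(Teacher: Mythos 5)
The paper offers no proof of this lemma; it is quoted directly from \cite[Remark 3.11]{RD-twisted}, so there is no in-text argument to compare against, and your proposal supplies what the citation leaves implicit. The derivation is correct and uses exactly the right two ingredients: the Odaka--Wang identification of the leading weight coefficients, giving $b_0=\L^{n+1}/(n+1)!$ on the compactified total space and $b_{0,D}=\L^n\cdot[\D]/n!$ for the induced $n$-dimensional test-configuration of the divisor, together with the identity $\L^n\cdot[\D]=r\,\L^n\cdot q^*L$ for generic $D\in|L^r|$. Your Bertini step is sound: every $q$-exceptional prime divisor has image of codimension at least two in $X\times\pr^1$, so its projection to $X$ is a proper closed subvariety, and a generic member of the very ample system $|L^r|$ contains none of these finitely many subvarieties; hence $q^*(D\times\pr^1)$ acquires no exceptional components and coincides with the proper transform of $\D$. (This also shows the resulting value is independent of the generic $D$ chosen, so it really is the general value $\tilde b_0$ of the paper.) One small cleanup: writing $\pi^*\D=\tilde D+E_\pi$ presumes $\D$ is $\Q$-Cartier, which need not hold on the total space; it is cleaner to apply the projection formula directly to the proper transform, $(\pi^*\L)^n\cdot\tilde D=\L^n\cdot\pi_*\tilde D=\L^n\cdot[\D]$, which yields the same conclusion without that assumption. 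With that adjustment the computation $\frac{n!}{r}\bigl(\tilde b_0-nb_0\bigr)=\L^n\cdot q^*L-\frac{nr^{-1}}{n+1}\L^{n+1}$ closes the argument.
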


The minimum norm also arises as a limit of certain finite dimensional Mumford weights \cite[Theorem 4.9]{Dervan-Keller}.

\section{Stability criteria}

This section provides several situations in which one can show a map is uniformly K-stable, proving Theorem \ref{thm:props-intro}. These results apply when $X$ has log canonical singularities, and in particular hold when $X$ is smooth. It is straightforward to extend the results to pairs. For ease of exposition, we shall only consider the log canonical case in this section, so in particular throughout we assume $X$ is normal.

\begin{definition}\label{def:lc} Let $X$ be a $\Q$-Gorenstein normal variety, and let $f: Y\to X$ be a resolution of singularities. Write $$K_Y- f^*K_X\equiv   \sum a_i E_i,$$ where $E_i$ are the components of the exceptional divisor. We say $X$ is
\begin{enumerate}[(i)]
\item \emph{log canonical} if $a_i \geq -1$ for all $i$,
\item \emph{Kawamata log terminal} if if $a_i > -1$ for all $i$. \end{enumerate}\end{definition}

One can prove analogous results to those of this section in the semi-log canonical setting  (see Definition \ref{def:slc}), where $X$ is no longer irreducible, as follows. Let $X^{\nu}\to X$ be the normalisation, and let $\bar F$ be the conductor divisor (see Definition \ref{def:conductor}). Then uniform K-stability of $p: (X,L)\to (Y,H)$ is implied by uniform K-stability of $p^{\nu}: ((X^{\nu},\bar F);L) \to (Y,H)$ (this follows, for example, by \cite[Remark 3.19]{BHJ}). Thus the above result for pairs implies the analogue of the above result for semi-log canonical varieties.

\subsection{Odaka's blowing up formalism}\label{sec:odaka}

Remark that the Donaldson-Futaki invariant and the minimum norm are unchanged by modifying the line bundle $\L$ on $\X$ by adding a multiple of $\pi^*\scO_{\pr^1}(1)$. When dealing with these invariants in practice, it is often convenient to remove this ambiguity by choosing a more canonical choice of $\L$. A useful way of doing this is by using Odaka's blowing-up formalism \cite{odaka-RT}; this has the added benefit of giving a concrete geometric interpretation of the test-configurations. 

Note that one obtains resolutions of indeterminacy, as in equation \eqref{resolution}, by blowing up so-called flag ideals on $X\times\pr^1$, which are simply ideal sheaves of the form $$\scI = I_0 + I_1(t) + \hdots + (t^N),$$ where $t$ is the co-ordinate on $\pr^1$. Set $\B = Bl_{\scI}(X\times \pr^1)$ to be this blow-up with exceptional divisor $E$.  

\begin{proposition} A map $p:(X,L)\to (Y,T)$ is uniformly K-stable if and only if it is uniformly K-stable with respect to semi-test-configurations of the form $(\B, rL-E)$, where $r\geq 1$ is such that $rL-E$ is relatively semi-ample and $\B$ is normal. \end{proposition}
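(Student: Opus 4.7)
The plan is to treat the two directions separately, with the forward direction following from a perturbation argument and the reverse being Odaka's blow-up reduction.

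For the ``only if'' direction, a semi-test-configuration of the stated form becomes a genuine test-configuration after replacing $rL-E$ by $rL - E + \delta H$, where $H$ is an auxiliary relatively ample line bundle on $\B$ and $\delta > 0$; indeed, a relatively semi-ample line bundle plus a relatively ample one is relatively ample, and equivariance of $H$ can be arranged. Since both $\DF_p$ and $\|\cdot\|_m$ are intersection-theoretic and depend continuously on $\L$, letting $\delta \to 0$ transfers the uniform K-stability inequality from the perturbed test-configuration to $(\B, rL-E)$.

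For the ``if'' direction, I would start with an arbitrary test-configuration $(\X,\L,p)$ and produce a semi-test-configuration of Odaka form with the same DF invariant and the same minimum norm. First, replace $(\X,\L)$ by its normalisation, which leaves both invariants as well as the morphism to $Y$ unchanged (the minimum norm case is cited in the proof of Definition \ref{lem:minimum}, and the DF case follows from the intersection-theoretic formula together with Fulton's asymptotic Riemann--Roch). Next, take an equivariant resolution of indeterminacy of the birational map $X\times\pr^1 \dashrightarrow \X^\nu$, obtaining a normal scheme $\B$ equipped with morphisms $q:\B\to X\times\pr^1$ and $\pi:\B\to \X^\nu$, both isomorphisms over $\pr^1 \setminus \{0\}$. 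Applying the universal property of the blow-up to $q$, the scheme $\B$ is identified (after normalisation) with the blow-up of $X\times\pr^1$ along some $\C^*$-invariant ideal sheaf supported on $X\times\{0\}$; such an ideal is automatically of the flag form $\scI = I_0 + I_1(t) + \cdots + (t^N)$. The morphism $p$ extends to $\B$ through the composition $\B \to X\times\pr^1 \to Y$.

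The main obstacle is the final step: exhibiting $\pi^*\L$, up to a multiple of $\pi^*\scO_{\pr^1}(1)$ (which affects neither DF nor the minimum norm), as $rL - E$ with $E$ the exceptional divisor of $q$ and $r\geq 1$ chosen so that $rL-E$ is $q$-semi-ample. For this one uses that $\pi^*\L$ is relatively semi-ample over $\pr^1$ and $\pi$-trivial, so modulo pullback from $\pr^1$ its numerical class is supported on the exceptional locus of $q$ together with a multiple of $L$; by scaling $\L$ via Remark \ref{rmk:scaling} one ensures that $r$ is integral and the relation $\pi^*\L \sim rL - E + s\,\pi^*\scO_{\pr^1}(1)$ holds with $E$ the exceptional divisor of the chosen flag ideal. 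Since $\DF_p$ and $\|\cdot\|_m$ are intersection-theoretic and preserved by pullback along the birational morphism $\pi$, the semi-test-configuration $(\B, rL-E)$ has the same Donaldson--Futaki invariant and minimum norm as $(\X,\L)$, and the uniform K-stability inequality transfers.
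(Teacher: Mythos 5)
Your overall architecture is reasonable, and you correctly isolate the one genuinely new point the paper needs: the Odaka-type model $\B$ automatically carries a morphism to $Y$ by composing $\B\to X\times\pr^1\to X\xrightarrow{\ p\ } Y$, so a reduction proved for twisted K-stability (without any map to $Y$) transfers to K-stability of the map. Indeed, the paper's proof consists of exactly this observation plus a citation of \cite[Corollary 3.3]{RD-twisted} for the reduction itself. Your ``only if'' perturbation is also fine in spirit, though you should take $H$ of the form $a\,q^*L-E$ (or otherwise restricting to a multiple of $L$ on the general fibre) so that the perturbed object is still a test-configuration for $(X,L)$ rather than for some other polarisation.

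The gap is in the final step of your ``if'' direction. You first fix $\B$ as the blow-up of \emph{some} flag ideal $\scI$ (produced by the universal property applied to $q:\B\to X\times\pr^1$), with exceptional divisor $E$, and then assert $\pi^*\L\sim r\,q^*L-E+s\,\pi^*\scO_{\pr^1}(1)$. This does not follow from the fact that $\pi^*\L-r\,q^*L$ is supported on the central fibre: that fibre has several components (the strict transform of $X\times\{0\}$ together with the $q$-exceptional divisors), $\pi^*\L-r\,q^*L$ is some divisor supported on them, and $E$ is one particular effective combination whose multiplicities are dictated by the highly non-unique choice of $\scI$; twisting by $\pi^*\scO_{\pr^1}(s)$ only shifts all multiplicities by a common constant, and rescaling $\L$ does not help. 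The correct argument runs in the opposite direction: write $\pi^*\L=r\,q^*L+D$ with $D$ supported on the central fibre, twist so that $-D$ is effective, and \emph{define} the flag ideal as $\scI:=q_*\scO_{\B}(D)$; one must then still verify that $r\,q^*L-E_{\scI}$ is relatively semi-ample on $\Bl_{\scI}(X\times\pr^1)$ and that the Donaldson--Futaki invariant and minimum norm are controlled in passing to this model. That verification is the actual content of \cite[Corollary 3.3]{RD-twisted}, following \cite{odaka-RT}, and it is precisely what your write-up elides.
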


\begin{proof} This is essentially proved in \cite[Corollary 3.3]{RD-twisted} in the setting of ``twisted K-stability'' using the strategy of Odaka \cite{odaka-RT}. There it is proved that if there is a test-configuration satisfying $$\DF_p(\X,\L) < \epsilon \|(\X,\L)\|_m,$$ then there is a $(\B, rL-E)$ as above, \emph{not a priori admitting a map to} $Y$, with $$\DF_p(\B, rL-E) < \epsilon \|(\B, rL-E)\|_m$$ as a formal intersection number. However since there is a natural map $\B\to X\times\pr^1$, by composition it is clear that there is a map $\B\to Y$ extending the usual map on the general fibre.
\end{proof}

\begin{remark}\label{rmk:odaka-min}Note that, from Lemma \ref{lem:minimum} the Donaldson-Futaki invariant of such a semi-test-configuration is given up to a dimensional constant as $$\|(\B,rL-E)\|_m = (rL-E)^n.(L+nr^{-1}E).$$ \end{remark}

The main advantage of this formalism is the following concrete bounds on the intersection numbers defining the Donaldson-Futaki invariant. 

\begin{lemma}\cite[Proposition 4.3, Theorem 2.6]{Od-Sa} \cite[Equation (3)]{odaka-calabi} \cite[Lemma 3.7]{Derv}\label{inequalities} Suppose $\scI\neq (t^m)$ for any $m$, and let $R$ be a nef line bundle on $X$. With all notation as above, the following intersection theoretic inequalities hold: \begin{itemize} \item[(i)] $(rL-E)^n.R \leq 0$, \item[(ii)] $(rL-E)^n.E > 0$,  \item[(iii)] $(rL-E)^n.(rL+nE)>0$. \end{itemize}\end{lemma}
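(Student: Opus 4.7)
Set $N := rL - E$; by hypothesis $N$ is $\pi$-semi-ample over $\pr^1$, and $E$ is effective and supported on the central fibre $\B_0 = \pi^{-1}(0)$. The non-triviality of $E$ as an exceptional divisor is enforced by the hypothesis $\scI \neq (t^m)$. A basic input I would use throughout is that since $L$ and $R$ are pulled back from $X$ of dimension $n$, any intersection of the form $(rL)^{n+1}\cdot\beta$ with $\beta$ also pulled back from $X$ vanishes on $\B$ by the projection formula; in particular $(rL)^n\cdot R = 0$ and $(rL)^{n+1} = 0$.

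For (i), I would expand $N^n\cdot R$ binomially. The leading $(rL)^n\cdot R$ term vanishes by the observation above, and what remains is a sum of intersections each carrying at least one factor of $E$, hence supported on $E\subset\B_0$. Using that $N|_{\B_0}$ is nef (being $\pi$-semi-ample) and $R$ is nef pulled back from $X$, the sign of the remaining terms is controlled by Zariski's lemma for the fibration $\pi:\B\to\pr^1$, which bounds intersections of $\pi$-nef classes with $\pi$-exceptional effective divisors. This is the approach taken in \cite[Proposition 4.3]{Od-Sa}. For (ii), one exploits $\scI\neq(t^m)$: writing the $\pi$-semi-ample morphism as $\phi:\B\to\B'$ over $\pr^1$ with $mN = \phi^*A$ and $A$ relatively ample on $\B'$, we have $N^n\cdot E = m^{-n}A^n\cdot\phi_*E$. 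If this vanishes then $\phi$ contracts $E$ to a locus of dimension $<n$, which on unpacking forces $\scI = (t^m)$ for some $m$, contrary to assumption. Hence $N^n\cdot E>0$ strictly.

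For (iii), one can first write $N^n\cdot(rL+nE) = r\,N^n\cdot L + n\,N^n\cdot E$. Now (i) with $R=L$ gives $r\,N^n\cdot L\leq 0$, while (ii) gives $n\,N^n\cdot E>0$; but the strict positivity of the sum does not follow from these two alone, since in principle the non-positive contribution could dominate. Rather, after a binomial rearrangement using $(rL)^{n+1}=0$, one obtains
\begin{align*}
N^n\cdot(rL+nE) = \sum_{k=2}^{n+1}(-1)^{k+1}(k-1)\binom{n+1}{k}(rL)^{n+1-k}\cdot E^k,
\end{align*}
and the required signs follow from the positivity properties of the Segre classes of the flag ideal $\scI$, with strict positivity of at least one summand ensured by $\scI\neq(t^m)$. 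The main obstacle I anticipate is precisely this delicate bookkeeping in (iii); the cleanest route is to invoke directly the Segre-class positivity statements in \cite[Lemma 3.7]{Derv} and \cite[Theorem 2.6]{Od-Sa}, which handle exactly this kind of alternating-sign computation on a blow-up along a flag ideal.
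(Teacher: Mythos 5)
The paper does not prove this lemma at all: it is imported verbatim from the cited sources (Odaka--Sano, Odaka, Dervan), so there is no internal argument to compare against, and your closing move of deferring to those references is exactly what the authors do. Your sketches do track how those sources argue, and the combinatorial identity you state in (iii) is correct: writing $N^n.(rL+nE)=(n+1)N^n.(rL)-nN^{n+1}$ and using $(rL)^{n+1}=0=(rL)^n.E$ gives the coefficient $(n+1)\binom{n}{k}-n\binom{n+1}{k}=(1-k)\binom{n+1}{k}$, which is precisely your formula. You are also right to observe that (iii) does not follow formally from (i) and (ii), which is the point the paper itself leans on when it later strengthens (iii) to Proposition \ref{odaka-improvement}.

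Two soft spots remain if your sketch is read as a proof rather than as a pointer. First, in (i) the raw binomial expansion of $(rL-E)^n.R$ has terms of alternating sign, so ``each remaining term carries a factor of $E$'' controls nothing by itself; one needs the telescoping factorisation $(rL-E)^n-(rL)^n=-E.\sum_{j=0}^{n-1}(rL-E)^j(rL)^{n-1-j}$, after which each summand is an effective cycle supported on the central fibre intersected with classes that become nef after adding fibres of $\pi$ (this mechanism is not ``Zariski's lemma'', which is a surface-fibration statement). Second, and more seriously, in (ii) the entire content is hidden in ``which on unpacking forces $\scI=(t^m)$'': it is far from clear that $\phi$ contracting every component of $E$ forces the flag ideal to be a power of $(t)$, and the cited proofs do not argue this way --- they use $(rL)^n.E=0$ (which, note, requires normalising the flag ideal so that $I_0\neq 0$; it is not literally implied by $\scI\neq(t^m)$) to write $(rL-E)^n.(-E)=\sum_j E^2.(rL-E)^j.(rL)^{n-1-j}$ and then exploit relative ampleness of $-E$ over $X\times\pr^1$. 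As a self-contained argument (ii) has a genuine gap; as a reconstruction of where the result lives in the literature, the proposal lands in the same place as the paper.
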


Remark in particular that $(iii)$ implies the minimum norm is strictly positive for non-trivial semi-test-configurations. We will need an improved version of this (which is proved in \cite[Lemma 4.40]{Dervan-Keller} when $n=2$). 

\begin{proposition}\label{odaka-improvement} With all notation as above, we have $$(rL-E)^n.(rL+(n-1)E)\geq 0.$$ \end{proposition}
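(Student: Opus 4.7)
Setting $H := rL - E$, the desired inequality rewrites as
\[
H^n \cdot (H + nE) = H^{n+1} + n\, H^n \cdot E \geq 0,
\]
a sharpening of Lemma~\ref{inequalities}(iii) by one copy of $H^n \cdot E$. My plan is to exploit the structure of the central fibre of the semi-test-configuration. Since the flag ideal $\scI$ has codimension at least two in $X \times \pr^1$, the central fibre decomposes as $\B_0 = \tilde X + E$, where $\tilde X$ is the strict transform of $X \times \{0\}$, and correspondingly $\tilde X \sim F - E$ for the fibre class $F := \pi^* \scO_{\pr^1}(1)$.

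First I would observe that the relative semi-ampleness of $H$ over $\pr^1$ forces $H|_{\tilde X}$ to be nef on $\tilde X$ (in fact semi-ample), and also that $H|_E = rL|_E - E|_E$ is ample on $E$ (since $-E|_E$ is relatively ample for $E \to V(\scI)$ while $L|_E$ is the pullback of the ample class $L|_{V(\scI)}$). Using $\tilde X \sim F - E$ together with $H|_F = rL|_X$ on a generic fibre, the nefness of $H|_{\tilde X}$ and of $L|_{\tilde X}$ (pulled back from the ample $L$ on $X$ through $\tilde X \to X$) then yields for each $0 \leq k \leq n$ the bound $H^k \cdot L^{n-k} \cdot E \leq r^k L^n$, coming from $(H|_{\tilde X})^k (L|_{\tilde X})^{n-k} \geq 0$. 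Pairing instead with the effective class $E|_{\tilde X}$ in one slot produces the complementary bounds $H^k \cdot L^{n-k-1} \cdot E^2 \leq 0$ for $0 \leq k \leq n-1$.

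Next, using the vanishings $L^{n+1} = 0$ (as $L$ is pulled back from the $n$-dimensional $X$) and $L^n \cdot E = 0$ (projection formula with $f_* E = 0$ for the blow-down $f: \B \to X \times \pr^1$), one can expand the quantity of interest as an alternating sum
\[
r H^n \cdot L + (n-1) H^n \cdot E \;=\; \sum_{j=2}^{n+1} (-1)^j \left[\binom{n}{j} - (n-1)\binom{n}{j-1}\right] r^{n+1-j}\, L^{n+1-j} \cdot E^j.
\]
Non-negativity is then to be deduced by combining the bounds above with the positivity $(H|_E)^n > 0$, which via adjunction controls the top terms $E^{n+1}$ through the Segre classes of the normal structure of the blow-up centre.

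The main obstacle is extracting the sharp factor $n-1$ rather than the weaker $n$ of Lemma~\ref{inequalities}(iii): the coefficients $\binom{n}{j} - (n-1)\binom{n}{j-1}$ are strictly negative for each $j \geq 2$, so the combinatorial bookkeeping is delicate. For $n=2$ the sum collapses to the single inequality $(rL - V(I_0)) \cdot V(I_0) \geq 0$, immediate from the nefness of $H|_{\tilde X} = rL - V(I_0)$ applied to the effective divisor $V(I_0) = E|_{\tilde X}$; this is precisely the argument of \cite[Lemma~4.40]{Dervan-Keller}. For general $n$ I would expect an inductive approach, for instance restricting to a general member of a sufficiently high power of $|L|$ (which reduces to the lower-dimensional situation by exploiting the codimension-$\geq 2$ assumption on $V(\scI)$), to be the cleanest way to isolate the correct coefficient.
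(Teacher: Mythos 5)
Your reduction to the expansion
$$rH^n.L+(n-1)H^n.E=\sum_{j=2}^{n+1}(-1)^j\left[\binom{n}{j}-(n-1)\binom{n}{j-1}\right]r^{n+1-j}L^{n+1-j}.E^j$$
is correct, as are the auxiliary inequalities $H^kL^{n-k}.E\le r^kL^n$ and $H^kL^{n-k-1}.E^2\le 0$ obtained by pairing nef classes against $\tilde X\sim F-E$. But these only control the monomials with $j\le 2$; for $j\ge 3$ nothing you have written pins down the sign of $L^{n+1-j}.E^j$, and the appeal to ``$(H|_E)^n>0$ controlling $E^{n+1}$ through Segre classes'' together with an unspecified induction on hyperplane sections is not an argument --- you say as much yourself. (Also, $H|_E$ need not be ample for the fixed $r$: relative semi-ampleness over $\pr^1$ only gives that it is semi-ample on $E$.) As it stands this is a complete proof only for $n=2$, which is the already-known case of Dervan--Keller, and a strategy sketch otherwise; there is a genuine gap for $n\ge 3$.

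The missing idea is to not expand into monomials at all. Using only $L^{n+1}=0$ and $L^n.E=0$ one has the exact identity
$$(rL-E)^n.(rL+(n-1)E)=-E.E.\left(\sum_{j=1}^{n-1}(n-j)(rL)^{j-1}.(rL-E)^{n-j}\right),$$
which is consistent with your alternating sum via the combinatorial identity $(n-1)\binom{n}{k-1}-\binom{n}{k}=\sum_{m=1}^{n-1}m\binom{m}{k-2}$. Since $-E$ is relatively ample for the blow-down $\B\to X\times\pr^1$, the class $-E.E$ is represented by an effective cycle supported in the central fibre, and each summand pairs this cycle against a product of classes ($rL$ and $rL-E$) that are semi-ample relative to $\pr^1$, hence nef on that fibre. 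Every term is therefore non-negative at once, with no delicate bookkeeping and no induction on dimension. I would rework the argument around this factorisation rather than trying to salvage the monomial-by-monomial estimates.
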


\begin{proof}

Note that $L^{n+1}=0$ since $X$ is $n$-dimensional, while $L^n.E=0$ since $\scI$ has codimension two support in $X\times\pr^1$. A combinatorial formula then gives $$(rL-E)^n.(rL+(n-1)E) = -E.E.\left(\sum_{j=1}^{n-1}(n-j)(rL)^{j-1}.(rL-E)^{n-j}\right).$$ Now, $-E$ is relatively ample for the blowup map. Thus $-E.E$ is an effective cycle with support contained in the central fibre of $\B$, and hence each term of the sum is non-negative by (relative) semi-ampleness of $rL$ and $rL-E$. \end{proof}

\begin{remark}\label{rmk:min-improve}Although the improvement on Lemma \ref{inequalities} $(iii)$ seems minor, it is crucial to proving the results in the present section. Essentially the above says that, instead of the minimum norm just being positive, it is positive in an ``effective'' way: $$\|(\B,rL-E)\|_m \geq c^{-1}(rL-E)^n.E,$$ where $c$ is independent of the test-configuration.\end{remark}

\subsection{Kodaira embeddings}\label{subsec:stability}

\begin{theorem}\label{thm:embeddings-body}\ 
Assume that $X$ is log canonical.  Then any Kodaira embedding 
$$(X,L)\lhook\joinrel\xrightarrow{L^{\otimes k}}(\pr^{N_k},\scO_{\pr}(1))$$
is a uniformly K-stable map for $k\gg 0$. \end{theorem}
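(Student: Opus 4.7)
The plan is to reduce, via Odaka's blowing-up formalism of Section \ref{sec:odaka}, to semi-test-configurations of the form $(\B, \L)$ where $\B = \Bl_\scI(X \times \pr^1)$ is the normalised blow-up along a flag ideal $\scI$ and $\L = rL - E$. The composition $\B \to X \times \pr^1 \to X \xrightarrow{p_k} \pr^{N_k}$ automatically provides the required extension of $p_k$ to the central fibre, so any such semi-test-configuration is a valid one for $p_k$.

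The core observation is the identity
\begin{equation*}
\DF_{p_k}(\B, \L) \;=\; \DF(\B, \L) \;+\; k \,\|(\B, \L)\|_m,
\end{equation*}
which follows by expanding Definition \ref{def:logKstability} using $p_k^*\scO_\pr(1) = L^{\otimes k}$ together with $\mu_{p_k}(X, L^r) = (\mu(X, L) - k)/r$; the $k$-dependent contributions from the slope and from the twisting term $\L^n . p_k^* T$ collapse exactly into $k$ times the minimum norm. Given this identity, it suffices to establish a uniform lower bound
\begin{equation*}
\DF(\B, \L) \;\geq\; -C \,\|(\B, \L)\|_m
\end{equation*}
for a constant $C = C(X, L)$ independent of the test-configuration; any $k > C$ then delivers uniform K-stability of $p_k$ with constant $\epsilon = k - C$.

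For the uniform lower bound I would follow the Odaka--Dervan strategy. Decompose $K_{\B/\pr^1} = f^* K_X + K_{\B/X\times\pr^1}$ where $f : \B \to X \times \pr^1$. The first piece is handled by writing $K_X = (K_X + mL) - mL$ for $m$ large enough that $K_X + mL$ is ample, and then applying Lemma \ref{inequalities}(i) together with Proposition \ref{odaka-improvement}. For the discrepancy piece, log canonicity of $X$ implies log canonicity of the pair $(X \times \pr^1, X \times \{0\})$, which bounds the log discrepancies of $f$ along the exceptional divisors below by $-1$; the resulting effective estimate, intersected with $(rL - E)^n$ and combined with Remark \ref{rmk:min-improve} (which compares $(rL-E)^n.E$ to $\|(\B, \L)\|_m$ with a test-configuration-independent constant), gives the required bound.

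The main obstacle is precisely this uniformity: a priori the complexity of the flag ideal $\scI$ is unbounded and the exceptional discrepancies of $f$ can grow with $\scI$, so crude sign information is insufficient. What saves the argument is the refined inequality of Proposition \ref{odaka-improvement}, which strengthens Lemma \ref{inequalities}(iii) just enough to convert a priori bounds involving $(rL-E)^n.E$ into bounds involving $\|(\B, \L)\|_m$ with a constant depending only on $(X, L)$; this is what upgrades Odaka's arguments from showing $\DF \geq 0$ in the canonically polarised case to yielding a uniform lower bound applicable to arbitrary polarisations.
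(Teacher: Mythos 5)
Your overall strategy is the paper's: the reduction to Odaka's blow-up semi-test-configurations $(\B,rL-E)$, Lemma \ref{inequalities} and Proposition \ref{odaka-improvement} are exactly the ingredients used there. Your identity $\DF_{p_k}(\B,\L)=\DF(\B,\L)+k\,\|(\B,\L)\|_m$ is correct (it follows from $\mu_{p_k}(X,L^r)=r^{-1}(\mu(X,L)-k)$ and the definition of the minimum norm), and reducing to a $k$-independent bound $\DF(\B,\L)\geq -C\|(\B,\L)\|_m$ is a clean repackaging of what the paper does implicitly when it groups $K_X+kL$ with the slope term and extracts the piece $2\hat k r^{-1}(rL+nE)$.

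Two steps in your sketch of the lower bound need repair. First, the discrepancy term: bounding the log discrepancies of $f$ below by $-1$ and then absorbing $(rL-E)^n.E$ into the norm is \emph{not} uniform, because the comparison furnished by Proposition \ref{odaka-improvement} is $\|(\B,rL-E)\|_m\geq r^{-1}(rL-E)^n.E$; absorbing $(rL-E)^n.E$ itself costs a factor of $r$, which is unbounded over the family of semi-test-configurations. The fix is the sharper standard statement, which is what the paper invokes: every $f$-exceptional divisor lies over $X\times\{0\}$ and so occurs with multiplicity at least one in $f^*(X\times\{0\})$, whence log canonicity of $(X\times\pr^1,X\times\{0\})$ (by inversion of adjunction from $X$) forces $K_{\B/X\times\pr^1}$ to be effective and $(rL-E)^n.K_{\B/X\times\pr^1}\geq 0$ — nothing needs absorbing. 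Second, your lower bound must also control the slope term $\frac{n\mu}{(n+1)r}(rL-E)^{n+1}$, which your sketch omits; writing it as $-\mu\|(\B,\L)\|_m+\mu\,(rL-E)^n.L$ and using $(rL-E)^n.L\geq -(n-1)r^{-1}(rL-E)^n.E\geq -(n-1)\|(\B,\L)\|_m$ (again Proposition \ref{odaka-improvement}) handles it, and the same estimate is what makes your nef decomposition of the $f^*K_X$ term produce an $r$-independent constant. With these repairs your argument goes through and yields the theorem.
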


\begin{proof}  We use the blowing up formalism of Section \ref{sec:odaka}. Let $(\B,rL-E)$ be a semi-test-configuration as above. Since $X$ is log canonical, by inversion of adjunction and Lemma \ref{inequalities} $(ii)$ we have $$(rL-E)^n.K_{\B/X\times \pr^1}\geq 0,$$ hence by Remark \ref{rmk:odaka-min} it suffices to show that \begin{align*} \DF_p(\B,rL-E) &\geq  \frac{n}{n+1}\mu_{kL}(X,rL)(rL-E)^{n+1} + (rL-E)^n.(K_X+kL) \\ &\geq \epsilon(rL-E)^n.(L+nr^{-1}E) =  \epsilon \|(\B,rL-E)\|_m\end{align*} for some $\epsilon>0$ and $k\gg 0$. For notational convenience, denote $\mu:= \mu(X,L)$, so that $\mu_{kL}(X,rL) = r^{-1}\mu-r^{-1}k$. Setting $\hat{k} = \frac{k}{2(n+1)}$, the Donaldson-Futaki invariant can be rewritten as \begin{align*}& (rL-E)^n.\left(\frac{nr^{-1}}{n+1}(\mu-k)(rL-E)  +K_X+kL \right) , \\ & =(rL-E)^n.\left(\frac{nr^{-1}\mu}{n+1}(rL-E)+K_X+2\hat{k}r^{-1}(rL+nE) \right), \\ &\geq (rL-E)^n.\left(\frac{nr^{-1}\mu}{n+1}(rL-E)+K_X+2\hat{k}r^{-1}E \right),\\  &= (rL-E)^n.\left(\left(\frac{n\mu}{n+1}L+K_X+\hat kr^{-1} E\right)+r^{-1}\left (\hat{k}-\frac{n}{n+1}\right)E\right), \end{align*} where we have used Proposition \ref{odaka-improvement} to go from the second to the third line. 

We deal with the two terms separately. For the first term, note that $$(rL-E)^n.E \geq (rL-E)^n.(-n^{-1}rL)$$ by Lemma \ref{inequalities} $(iii)$, so that \begin{align*} (rL-E)^n.\bigg(\frac{n\mu}{n+1}&L+K_X+\hat k r^{-1}E\bigg) \\ & \geq (rL-E)^n.\bigg(\frac{n\mu}{n+1}L+K_X-\hat k n^{-1}L\bigg),\end{align*} which is positive proved $-\frac{n\mu}{n+1}L-K_X+\hat k n^{-1}L$ is nef by Lemma \ref{inequalities} $(i)$. Certainly the class is nef for $\hat k$ (or equivalently $k$) sufficiently large. 

For the second term, note that by Lemma \ref{inequalities} $(ii)$ and $(iii)$, for $\hat k \geq 1$ we have 
\begin{align}(rL-E)^n.\left(r^{-1}\left (\hat{k}-\frac{n}{n+1}\right)E\right)&\geq \frac{1}{n+1}(rL-E)^n.r^{-1}E, \\ &\geq \frac{1}{n(n+1)}(rL-E)^n.(L+nr^{-1}E), \end{align} as required.  \end{proof}

\begin{remark} The same proof shows that the identity map $(X,L) \to (X,L^k)$ is a uniformly K-stable map for $k \gg 0$. This can also be obtained from a combination of the above and the factorisation properties of K-stability of maps, which we shall prove as Theorem \ref{thm:factorization} $(i)$. \end{remark}

It is natural to ask how $k$ depends on $(X,L)$, and whether or not the above construction can be performed in families. Our next result shows that this is the case, at least when $X$ is smooth, if one instead embeds using powers of certain adjoint bundles $mL+2K_X$. 

\begin{theorem}\label{thm:uniform-kodaira}

Consider the set $\mathcal{S}$ of smooth polarised varieties with values $\{\dim X = n, \vol(X) = L^n, L^{n-1}.K_X, L^{n-2}.K_X^2,\hdots, K_X^n\}$ fixed. For all $m \gg 0$, there is a $k=k(m)$ such  that for all $(X,L)\in \mathcal{S}$ the map $$(X,mL+2K_X)\lhook\joinrel\xrightarrow{(mL+2K_X)^{\otimes k'}}(\pr^{N_{k'}},\scO_{\pr}(1))$$ is uniformly K-stable for all $k' \geq k$.

\end{theorem}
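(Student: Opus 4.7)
The plan is to repeat the argument of Theorem \ref{thm:embeddings-body} with $L$ replaced by $L_m := mL + 2K_X$ and $k$ by $k'$, keeping track of every constant that appears and verifying it can be chosen uniformly over $\mathcal{S}$. The key observation is that any intersection number built from $L$ and $K_X$ is a polynomial in $m$ with coefficients in the fixed data $\{L^n, L^{n-1}.K_X, \dots, K_X^n\}$; hence once $m$ is fixed, the slope $\mu^{(m)} := -K_X . L_m^{n-1}/L_m^n$, together with every related quantity, takes the same value on every member of $\mathcal{S}$.

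The first step is to dispose of the ampleness question. Before running the argument one needs $L_m$ to be ample for every $(X,L) \in \mathcal{S}$ with a single threshold on $m$. Since all pairs in $\mathcal{S}$ share a common Hilbert polynomial, the family is bounded, and a Matsusaka-type effective ampleness result then provides a uniform $m_0$ such that $L_m$ is ample for all $m \geq m_0$ and all $(X,L) \in \mathcal{S}$.

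With $m$ so fixed, one applies the Odaka blowing-up formalism of Section \ref{sec:odaka} to the Kodaira embedding $p : (X, L_m) \to \pr^{N_{k'}}$ and reduces to semi-test-configurations of the form $(\B, rL_m - E)$ with $\B = \Bl_{\scI}(X \times \pr^1)$ normal. Setting $\hat{k'} := k'/(2(n+1))$ and running the manipulation of Theorem \ref{thm:embeddings-body} verbatim, the Donaldson--Futaki invariant splits into a ``nefness'' term, whose positivity reduces to the class $\bigl(\hat{k'} n^{-1} - \tfrac{n\mu^{(m)}}{n+1}\bigr) L_m - K_X$ being nef, together with a ``norm'' term which for $\hat{k'} \geq 1$ already produces $\tfrac{1}{n(n+1)} \|(\B, rL_m - E)\|_m$ via Lemma \ref{inequalities}. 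The latter estimate is completely insensitive to the particular variety in $\mathcal{S}$, so only the nefness condition needs to be upgraded to a uniform statement.

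The decisive manoeuvre is to eliminate $K_X$ using the defining relation $2K_X = L_m - mL$, which rewrites the class in question as
\[
\bigl(\hat{k'} n^{-1} - \tfrac{n\mu^{(m)}}{n+1} - \tfrac{1}{2}\bigr) L_m + \tfrac{m}{2} L,
\]
a non-negative combination of the ample classes $L_m$ and $L$ as soon as $\hat{k'} \geq n\bigl(\tfrac{n\mu^{(m)}}{n+1} + \tfrac{1}{2}\bigr)$. Since this threshold depends only on $m$, $n$ and the fixed intersection numbers, we obtain the required uniform $k(m)$. The hardest part of the plan is not the intersection-theoretic calculation but the preliminary boundedness input: the uniform ampleness threshold for $L_m$ cannot be extracted from the DF computation itself and must be imported from the general theory of bounded families of polarised varieties.
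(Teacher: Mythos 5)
Your treatment of the Donaldson--Futaki computation is essentially the paper's own argument: the proof does run Theorem \ref{thm:embeddings-body} with $L_m=mL+2K_X$, isolates the single non-uniform ingredient as the nefness of $-\tfrac{n\mu^{(m)}}{n+1}L_m-K_X+\hat k' n^{-1}L_m$, and disposes of it by regrouping $-K_X$ as a combination of $L_m$ and $L$ so that the class becomes a non-negative combination of the nef class $L_m$ and the ample class $L$ once $\hat k'$ exceeds a threshold determined by $n$ and $\mu^{(m)}$. (Your coefficients $-\tfrac12$ and $\tfrac{m}{2}L$ are in fact the arithmetically correct ones; the paper's displayed grouping with $-1$ and $mL$ reconstitutes $-2K_X$ rather than $-K_X$ and should be read as a harmless slip.) Your observation that $\mu^{(m)}$ is literally the \emph{same number} for every member of $\mathcal{S}$, being a ratio of polynomials in $m$ with coefficients among the fixed intersection numbers, is a clean and correct replacement for the paper's weaker bound $\mu^{(m)}\le 1$. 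The norm term and the $K_{\B/X\times\pr^1}$ term are handled exactly as in the paper via Lemma \ref{inequalities} and Proposition \ref{odaka-improvement}, and are indeed insensitive to the member of $\mathcal{S}$.

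The genuine gap is in your preliminary step. You derive uniform ampleness of $L_m$ from the claim that all members of $\mathcal{S}$ share a common Hilbert polynomial and hence form a bounded family. This is false as stated: $\chi(X,L^k)$ is computed by Riemann--Roch from $L$ and the full Todd class of $X$, and the coefficients below the top two involve $c_2(X)$ and higher Chern classes, which are not fixed by the data $\{L^n, L^{n-1}.K_X,\dots,K_X^n\}$. So $\mathcal{S}$ need not have a single Hilbert polynomial, boundedness of $\mathcal{S}$ is not available as an input (indeed the authors present the theorem itself as ``a sort of boundedness result'', so assuming it would be circular), and the Matsusaka-type appeal does not get off the ground. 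Moreover, you only assert \emph{ampleness} of $L_m$, whereas the statement requires the map to be a Kodaira embedding by $(mL+2K_X)^{\otimes k'}$ for all $k'\ge k$ uniformly over $\mathcal{S}$, which needs uniform \emph{very} ampleness. The paper's fix is different and simpler: it invokes Demailly's numerical criterion (an effective Fujita-type result) giving an $m'$, depending only on the dimension, such that $2K_X+mL$ is very ample for every smooth polarised $n$-fold and every $m\ge m'$. Replacing your boundedness argument by this citation (or, for mere ampleness, by the cone theorem bound on the length of extremal rays) repairs the proof; everything after that point is correct.
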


\begin{proof}

This is essentially a consequence of the various results towards Fujita's conjecture. We use Demailly's result, which states that there exists an $m'$ such that for all $(X,L) \in \mathcal{S}$, the line bundle $mL+2K_X$ is very ample for all $m \geq m'$ \cite[Corollary 2]{demailly-numerical}. We shall work only with such $m$. 

The condition in the proof of Theorem \ref{thm:embeddings-body} that needs to be satisfied is that the line bundle  $$-\frac{n}{n+1}\mu(X,mL+2K_X)(mL+2K_X) -K_X + \hat k n^{-1}(mL+2K_X)$$ is nef for $m \gg 0$ and $\hat k \gg 0$, where explicitly $$\mu(X,mL+2K_X) = \frac{-K_X.(mL+2K_X)^{n-1}}{(mL+2K_X)^n}.$$ Grouping terms, this line bundle is $$\left(-\frac{n}{n+1}\mu(X,mL+2K_X) +  \hat k n^{-1}-1\right)(mL+2K_X)+mL.$$ 

Note that for $m\gg 0$, independent of $(X,L)\in \mathcal{S}$, we have $\mu(X,mL+2K_X)\leq 1.$ Hence for any such $m$, the number $-\frac{n}{n+1}\mu(X,mL+2K_X) +  \hat k n^{-1}-1$ is positive for all $\hat k \gg 0$ (or equivalently $k \gg 0)$. Thus the result follows since $mL+2K_X$ is very ample, hence nef. 

\end{proof}

This is a sort of boundedness result for K-stable maps. A version of the Fujita conjecture for log canonical varieties would extend the above proof to the singular setting. Of course, in the singular case for any geometric application of the above it is crucial to obtain bounds on the Cartier index of $L$. For example, a typical case is when $L=\pm K_X$ (so $\mathcal{S}$ just fixes the dimension and the volume), where to obtain some boundedness result the main difficulty is to bound the Cartier index of $\pm K_X$. The point of the above Corollary is that, once one has a bounded family of varieties, it is essentially automatic that one obtains a bounded family of K-stable maps. 

One can easily generalise Theorem \ref{thm:embeddings-body} as follows, using the notion of ``J-stability'' introduced by Lejmi-Sz\'ekelyhidi \cite{Lejmi-Sz}. We use the reformulation of \cite[Proposition 4.29]{Dervan-Keller}. Denote \begin{equation}\label{eq:gamma}\gamma_T(X,L) = \frac{L^{n-1}.T}{L^n}.\end{equation}

\begin{definition}\label{def:j-stab} Consider a variety $X$ with line bundles $L,T$ where $T$ is ample. We say that $((X,T);L)$ is \emph{uniformly J-stable} if there exists an $\epsilon>0$ such that for each test-configuration $(\X,\L)$, we have $$\J_T(\X,\L) = -\frac{n}{n+1}\gamma_T(X,L) \L^{n+1} + \L^n.T \geq \epsilon \|(\X,\L)\|_m,$$ calculated on a resolution of indeterminacy. \end{definition}

\begin{remark} When $X$ is smooth and $T$ is ample, uniform J-stability is conjecturally equivalent to the existence of a solution to $\Lambda_{\omega} \alpha = c$, where $c\in \R$, $\alpha\in c_1(T)$ is a fixed K\"ahler form and $\omega\in c_1(L)$ \cite[Conjecture 1]{Lejmi-Sz}.\end{remark}

\begin{theorem}\label{thm:j-stab} Suppose $X$ is log canonical, $((X,T);L)$ is uniformly J-stable and $T$ is semi-ample. Then $$(X,L)\joinrel\xrightarrow{T^{\otimes k}}(\pr^{N_k'},\scO_{\pr}(1))$$ is a uniformly K-stable map for all $k\gg 0$. Conversely, if $((X,T);L)$ is J-unstable, then the Kodaira embedding above is a K-unstable map. \end{theorem}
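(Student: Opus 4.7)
\emph{Proof proposal.} The argument extends the strategy of Theorem \ref{thm:embeddings-body} to accommodate a general semi-ample line bundle $T$ in place of $L$. The starting observation is an additive decomposition: for any test-configuration $(\X,\L)$ for the map $p_k$ (where $p_k^*\scO_{\pr}(1)=T^k$), substituting the identity $\mu_{p_k}=\mu-k\gamma_T$ into the intersection-theoretic formula for $\DF_{p_k}$ and separating the $k$-linear from the $k$-independent pieces yields
\begin{equation*}
\DF_{p_k}(\X,\L) \;=\; \DF(\X,\L) \;+\; k\,\J_T(\X,\L),
\end{equation*}
where $\DF(\X,\L)$ denotes the absolute Donaldson-Futaki invariant of $(X,L)$ and $\J_T(\X,\L)$ the J-functional of Definition \ref{def:j-stab}.

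For the forward direction, I would apply Odaka's formalism (Section \ref{sec:odaka}) to reduce to semi-test-configurations of the form $(\B, rL-E)$. Uniform J-stability then supplies
\begin{equation*}
k\,\J_T(\B,rL-E)\;\geq\; k\delta\,\|(\B,rL-E)\|_m
\end{equation*}
with $\delta>0$ independent of the test-configuration, so the remaining task is to bound the absolute piece below by a fixed constant multiple of the minimum norm. Since $X$ is log canonical, inversion of adjunction together with Lemma \ref{inequalities}(ii) gives $(rL-E)^n.K_{\B/X\times\pr^1}\geq 0$, whence
\begin{equation*}
\DF(\B,rL-E)\;\geq\;\tfrac{n}{n+1}r^{-1}\mu(X,L)(rL-E)^{n+1}+(rL-E)^n.K_X.
\end{equation*}
Rearranging exactly as in the proof of Theorem \ref{thm:embeddings-body}, by isolating a positive multiple of $(rL-E)^n.E$ via Proposition \ref{odaka-improvement} and bounding the residual class $\tfrac{n\mu}{n+1}L+K_X$ against $L$ using Lemma \ref{inequalities}(i) and (iii), one obtains an estimate $\DF(\B,rL-E)\geq -C\|(\B,rL-E)\|_m$ with $C$ depending only on the intersection numbers of $(X,L)$. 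Choosing $k$ large enough that $k\delta > C+\varepsilon_0$ for a fixed $\varepsilon_0>0$ then delivers uniform K-stability of $p_k$.

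For the converse, if $((X,T);L)$ is not J-semistable pick a test-configuration $(\X,\L)$ for $(X,L)$ with $\J_T(\X,\L)<0$. After passing to a resolution of indeterminacy $\B\to X\times\pr^1$ dominating $\X$ (which leaves the intersection-theoretic invariants unchanged), the composition $\B\to X\times\pr^1\to X\xrightarrow{p_k}\pr^{N_k}$ provides a natural extension of $p_k$ over the central fibre, so $(\B,\L_\B)$ is a genuine test-configuration for the map. The decomposition above then forces $\DF_{p_k}(\B,\L_\B)<0$ for all $k>-\DF(\B,\L_\B)/\J_T(\B,\L_\B)$, giving K-instability. The main obstacle lies in the forward direction: establishing the uniform lower bound $\DF(\B,rL-E)\geq -C\|(\B,rL-E)\|_m$ requires arranging the intersection-theoretic identities carefully so that the potentially negative contribution from $K_X$ is absorbed by the non-negative terms supplied by Lemma \ref{inequalities} and Proposition \ref{odaka-improvement}, with any residual explicitly controlled by the minimum norm. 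Once this is in place, the J-stability hypothesis provides exactly the positivity needed to close the argument for $k\gg 0$.
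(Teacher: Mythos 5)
Your proposal is correct and follows essentially the same route as the paper: your decomposition $\DF_{p_k}=\DF+k\,\J_T$ is exactly the paper's identity $\DF_p(\B,rL-E)=\J_{K_X+kT}(\B,rL-E)+(rL-E)^n.K_{\B/X\times\pr^1}$ after applying the linearity \eqref{eq:j-linearity}, and both arguments then combine log canonicity (to discard the relative canonical term), Proposition \ref{odaka-improvement} with Lemma \ref{inequalities} (to bound the $k$-independent piece below by $-C\|\cdot\|_m$), and uniform J-stability (to supply the $k\epsilon\|\cdot\|_m$ term). The converse via a resolution of indeterminacy and a small perturbation to regain relative ampleness is likewise the intended argument.
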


\begin{proof} The proof is essentially identical to that of Theorem \ref{thm:embeddings-body}, by noting that for a blow-up test-configuration we have $$\DF_p(\B,rL-E) =  \J_{K_X+kT}(\B,rL-E)+ (rL-E)^n.K_{\B/X\times \pr^1},$$ and using the linearity property \begin{equation}\label{eq:j-linearity}\J_{aH+bM}(\B,rL-E) =a\J_{H}(\B,rL-E)+ a\J_{M}(\B,rL-E).\end{equation} \end{proof} It is fairly simple to give explicit criteria for uniform J-stability \cite{Lejmi-Sz,Dervan-Keller,Yoshi-Keller}, so from the above we obtain further examples of stable maps. 

\subsection{Numerical Properties, and Variation}

Here we prove some results regarding the behaviour of uniform K-stability on the various cones of line bundles of $X$ and $Y$. These results rely on the \emph{uniform} lower bound on the Donaldson-Futaki invariant, it seems much more challenging to prove them assuming only that the map is K-stable. Firstly, we show that uniform K-stability is a numerical condition on $L$ and $T$:

\begin{theorem} \label{thm:numerical} Uniform K-stability of $p:(X,L)\to (Y,T)$ depends only on the numerical class of $L,T$. \end{theorem}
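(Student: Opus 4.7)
The proof hinges on the observation that both $\DF_p(\X,\L)$ and $\|(\X,\L)\|_m$ are intersection numbers on the projective variety $\X$, and so depend only on the numerical classes of the line bundles involved. The work is in matching up test-configurations on the two sides so that these invariants are preserved.

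The replacement $T\mapsto T'$ is trivial: the definition of a test-configuration involves only the map $p:X\to Y$ and the polarisation $L$, but not $T$, so the set of test-configurations for $p:(X,L)\to (Y,T)$ and for $p:(X,L)\to (Y,T')$ is literally the same. The bundle $T$ enters $\DF_p$ only through the constant $\mu_p$ and the intersection $\L^n\cdot p^*T$, both numerical in $T$ since pullback preserves numerical equivalence, while $\|\cdot\|_m$ does not involve $T$ at all. So we may assume $T=T'$ and reduce to showing that $L\mapsto L'$ with $L\equiv L'$ preserves uniform K-stability.

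For the $L$-replacement my plan is to apply the Odaka blow-up reduction of Section \ref{sec:odaka}, which lets us verify uniform K-stability on the restricted class of semi-test-configurations $(\B,rL-E)$ arising from flag ideals $\scI$ on $X\times\pr^1$. The crucial feature is that $\B=\Bl_\scI(X\times\pr^1)$ and the exceptional divisor $E$ depend only on $\scI$, not on $L$. Since $L\equiv L'$ on $X$, their pullbacks to $\B$ are numerically equivalent, and hence $rL-E\equiv rL'-E$ as $\Q$-line bundles on $\B$. Every intersection number entering $\DF_p(\B,\cdot)$ and $\|\cdot\|_m$ depends only on these numerical classes, and $\mu_p$ itself is a numerical invariant of $L$; consequently $\DF_p(\B,rL-E)=\DF_p(\B,rL'-E)$ and $\|(\B,rL-E)\|_m=\|(\B,rL'-E)\|_m$, so the uniform estimate transfers with the same constant $\epsilon$.

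The main technical point I expect to need care with is that the definition of a semi-test-configuration demands \emph{relative semi-ampleness}, which is not in general a numerical condition on $\Q$-line bundles. However, relative ampleness \emph{is} numerical by Kleiman's criterion, and for any fixed flag ideal $\scI$ both $rL-E$ and $rL'-E$ are relatively ample once $r$ is sufficiently large (since $L$ and $L'$ are ample on $X$). A standard perturbation $rL-E+\epsilon A$ with $A$ ample and $\epsilon\to 0$, combined with the continuity of the DF invariant and the minimum norm as polynomials in $\epsilon$, shows that the uniform estimate need only be checked on relatively ample polarisations; with this reduction the correspondence $(\B,rL-E)\leftrightarrow(\B,rL'-E)$ becomes a bijection preserving DF and norm, completing the proof. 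As an alternative route one may invoke Remark \ref{rmk:twistedKstability}, which identifies $\DF_p$ with the twisted Donaldson-Futaki invariant of $(X,L)$ with twist $p^*T$, and appeal to the analogous intersection-theoretic numerical invariance for uniform twisted K-stability in \cite{RD-twisted}.
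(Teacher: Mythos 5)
Your proposal is correct and follows essentially the same route as the paper: the $T$-direction is immediate, and the $L$-direction is handled via the Odaka blow-up formalism of Section \ref{sec:odaka}, observing that $\B$ and $E$ are independent of $L$, that all the relevant quantities are intersection numbers hence numerical, and perturbing $rL-E$ to a relatively ample class (since relative ampleness, unlike semi-ampleness, is a numerical condition) while preserving the destabilising inequality.
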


\begin{proof} This is obvious for $T$. For $L$, this a consequence of the blowing up formalism of  Section \ref{sec:odaka}. Suppose $L'\equiv_{num} L$. If $p:(X,L)\to (Y,T)$, then for all $\epsilon>0$ there is a semi-test-configuration $(\B,rL-E)$ with $\DF_p(\B,rL-E)< \epsilon \|(\B,rL-E)\|_m$. By definition of a semi-test-configuration, $rL-E$ is relatively semiample. Perturbing slightly one can assume $rL-E$ is actually relatively ample, while preserving the inequality $\DF(\B,rL-E)< \epsilon \|(\B,rL-E)\|_m$. But then as relative ampleness is a numerical condition,  $(\B,rL'-E)$ is a test-configuration for $p': (X,L')\to (Y,T)$ and one still has $\DF_p(\B,rL'-E) < \epsilon \|(\B,rL'-E)\|_m$, as required.  \end{proof}

\begin{remark} Given the above, it is natural to ask if there is a definition of K-stability of maps when $L\in \Amp_{\R}(X)$ and $T\in\Pic_{\R}(Y)$. In fact more generally one can give a definition that makes sense for $L$ replaced a \emph{K\"ahler} class with $X$ a smooth K\"ahler manifold and $Y$ a complex manifold, and $T$ replaced with a Bott-Chern class, by a straightforward variant of the K\"ahler version of K-stability defined in \cite{Dervan-Ross,ZSD}. \end{remark}

We next prove an openness result for uniformly K-stable maps. 

\begin{theorem}\label{thm:variation} Let $p:(X,L)\to (Y,H)$ be a uniformly K-stable map. Then stability of a map is an open condition in $\Pic_{\Q}Y$. \end{theorem}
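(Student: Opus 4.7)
The strategy is to compare Donaldson-Futaki invariants for different twistings via the J-functional, and then to bound the difference uniformly in terms of the minimum norm. For any $\eta\in \Pic_\Q(Y)$ and any test-configuration $(\X,\L)$ for $p$, a direct computation from the definition \eqref{eq:DFintersection} of $\DF_p$ (and of the slope $\mu_p$) shows that changing the target twisting from $H$ to $H+\eta$ modifies the invariant by
$$\DF_p(\X,\L)\big|_{H+\eta} - \DF_p(\X,\L)\big|_{H} = \J_{p^*\eta}(\X,\L),$$
where $\J_\alpha$ is defined by the formula in Definition \ref{def:j-stab}, extended to arbitrary (not necessarily ample) line bundles. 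Hence if $p$ is uniformly K-stable with constant $\epsilon$, then obtaining uniform K-stability with constant $\epsilon/2$ for the perturbed target $(Y,H+\eta)$ reduces to the estimate
$$|\J_{p^*\eta}(\X,\L)| \leq \tfrac{\epsilon}{2}\|(\X,\L)\|_m$$
for all test-configurations for $p$, provided $\eta$ lies in a sufficiently small neighbourhood of $0$ in $\Pic_\Q(Y)$.

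Because $\Pic_\Q(Y)$ is finite-dimensional over $\Q$ and the ample cone is open, we may fix a basis $\beta_1,\ldots,\beta_N$ of $\Pic_\Q(Y)$ consisting of ample $\Q$-line bundles. By the linearity \eqref{eq:j-linearity} of $\J_\alpha$ in $\alpha$, the required estimate will follow once we establish, for each ample $\beta$ on $Y$, a constant $C_\beta$ with $|\J_{p^*\beta}(\X,\L)| \leq C_\beta \|(\X,\L)\|_m$ uniformly over all test-configurations for $p$. The desired neighbourhood of $H$ in $\Pic_\Q(Y)$ is then $\{H+\sum c_i\beta_i : \sum |c_i|C_{\beta_i} \leq \epsilon/2\}$.

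To produce the uniform bound on $|\J_{p^*\beta}|$, we invoke Odaka's blowing-up formalism (Section \ref{sec:odaka}) and reduce to semi-test-configurations of the form $(\B,rL-E)$. For $\beta$ ample on $Y$, the pullback $p^*\beta$ is nef on $X$, so Lemma \ref{inequalities}(i) gives $(rL-E)^n.p^*\beta \leq 0$; a matching lower bound follows by choosing $\kappa>0$ with $\kappa L - p^*\beta$ nef and using the bound $|(rL-E)^n.L| \leq \tfrac{n-1}{r}(rL-E)^n.E$ extracted from Proposition \ref{odaka-improvement}. Similarly, writing $-(rL-E)^{n+1} = (rL-E)^n.E - r(rL-E)^n.L$ and reapplying Proposition \ref{odaka-improvement} yields $-(rL-E)^{n+1} \leq n(rL-E)^n.E$. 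Finally, Remark \ref{rmk:min-improve} bounds $(rL-E)^n.E$ by a constant multiple of $\|(\B,rL-E)\|_m$ uniformly in $(\B,rL-E)$, and these estimates assemble to the desired inequality. The main obstacle is exactly the uniformity of the constants $C_\beta$: they cannot depend on the individual test-configuration. This uniformity is precisely the effective strengthening of Lemma \ref{inequalities}(iii) supplied by Proposition \ref{odaka-improvement} and Remark \ref{rmk:min-improve}, which is the feature distinguishing uniform K-stability from K-stability and makes a perturbation argument of this form tenable.
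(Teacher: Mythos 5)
Your proof is correct and follows essentially the same route as the paper: both reduce the perturbation of $\DF_p$ in the target polarisation to a uniform two-sided bound $|\J_{p^*T}(\B,rL-E)|\leq c\,\|(\B,rL-E)\|_m$ over Odaka blow-up semi-test-configurations, with the uniformity supplied by Proposition \ref{odaka-improvement} together with Lemma \ref{inequalities} and Remark \ref{rmk:min-improve}. The only cosmetic difference is that you decompose $\Pic_\Q(Y)$ into a basis of ample classes and bound each $\J_{p^*\beta}$ termwise, whereas the paper packages the same estimates as the single inequality $\J_{cL-T}(\B,rL-E)\geq 0$ for $c\gg 0$.
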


\begin{proof}

We use the blowing up formalism of Section \ref{sec:odaka}, and follow the notation used there and in Definition \ref{def:j-stab}. Following this notation, note that $$\J_L(\B,rL-E) = \|(\B,rL-E)\|_m.$$ Fix some $T\in \Pic_{\Q}(Y)$. Since $ \Pic_{\Q}(Y)$ is a finite dimensional vector space, from the definition of uniform K-stability, it is clear that it suffices to establish that there exists a $c=c(T)\in \R$ independent of $(\B,rL-E)$ such that $$-c\J_L(\B,rL-E) \leq \J_T(\B,rL-E) \leq c \J_L(\B,rL-E).$$ 

We first establish the second inequality. By the linearity property noted in equation \eqref{eq:j-linearity}, it is enough to show that for $c\gg 0$ we have $$\J_{cL-T}(\B,rL-E)\geq 0.$$ In fact our argument will still hold if we replace $T$ with $-T$, so will give the first inequality as well. 

By definition and using $\gamma_{cL-T}(X,L) = c-\gamma_T(X,L)$  we have \begin{align*}\J_{cL-T}(\B,rL-E) &= (rL-E)^n.\left(-\frac{nr^{-1}}{n+1}\gamma_{cL-T}(X,L)(rL- E) + cL-T\right), \\ &= (rL-E)^n.\bigg(\frac{r^{-1}}{n+1}\gamma_{cL-H}(rL+(n-1) E)- \\ & \ \ \ \ \ \ \ \ \ \ \ \  \ \ \ \ \ \  \ \ \ \ \ \ -\gamma_{T}L-T +\frac{1}{n+1}(c-\gamma_{T})E  \bigg) .\end{align*} For $c \gg 0$ we have $\gamma_{cL-T}(X,L)>0$ so the first term is non-negative by the key result Proposition \ref{odaka-improvement}. The remaining terms sum to a non-negative number by a combination of Lemma \ref{inequalities} $(iii)$ and Lemma \ref{inequalities} $(i)$.

\end{proof}

\begin{remark} The above result also shows that uniform J-stability in the sense of Definition \ref{def:j-stab}  is an open condition as one varies $T$. Remark again that the improvement in Proposition \ref{odaka-improvement} compared to Lemma \ref{inequalities} $(iii)$ is crucial in proving  the above: knowing only Lemma \ref{inequalities} $(iii)$, the proof breaks down.  \end{remark}

 \subsection{Factorisation, compositions, naturality}\label{sec:fact}

Here we prove:
 
\begin{theorem}\label{thm:factorization} Let $(X,L)\joinrel\xrightarrow{p} (Z,q^*T)\joinrel\xrightarrow{q} (Y,T)$ be maps.
\begin{enumerate}[(i)]
\item  If $p\circ q: (X,L) \to (Y,T)$ is K-stable (resp. K-semistable, uniformly K-stable), then $X\to Z$ is K-stable (resp. K-semistable, uniformly K-stable).  If $q: Z\to Y$ is an isomorphism, then the converse is true. Thus the automorphism group of $Y$ acts on the space of stable maps to $Y$.
\item If $p: (\X,\L)\to (Z,q^*T)$ is uniformly K-stable, then so is $q\circ p: (X,L) \to (Y,T)$.
\end{enumerate}
\end{theorem}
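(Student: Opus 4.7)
The plan is to reduce both parts to a direct comparison of Donaldson-Futaki invariants between a test-configuration for $p$ and the same data viewed as a test-configuration for $q\circ p$. The key observation is that whenever $(\X,\L)$ carries a morphism $\tilde p:\X \to Z$ extending $p$, setting $\tilde q = q\circ\tilde p$ gives $\tilde q^{*}T = \tilde p^{*}(q^{*}T)$ on $\X$, and similarly $(q\circ p)^{*}T=p^{*}(q^{*}T)$ on $X$. Consequently the slopes $\mu_p$ and $\mu_{q\circ p}$ agree, the intersection number $\L^n\cdot\tilde q^{*}T$ is unchanged, and the minimum norm depends only on $\L$ and $L$. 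So for such configurations $\DF_p(\X,\L) = \DF_{q\circ p}(\X,\L)$ and $\|(\X,\L)\|_m$ is the same in either setting.

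For part $(i)$, I would start with an arbitrary test-configuration $(\X,\L,\tilde p:\X\to Z)$ for $p:(X,L)\to(Z,q^{*}T)$ and post-compose with $q$ to produce $(\X,\L,q\circ\tilde p:\X\to Y)$, which is readily checked to satisfy all conditions of Definition \ref{def:testconfiguration} for $q\circ p$. By the observation above, the Donaldson-Futaki invariants and norms agree, so each of the three stability notions for $q\circ p$ descends directly to $p$. When $q$ is an isomorphism the assignment $\tilde p\mapsto q\circ\tilde p$ is a bijection on test-configurations, giving the converse.

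Part $(ii)$ is more delicate: a test-configuration for $q\circ p$ carries a map $\tilde q:\X\to Y$ that need not factor through $Z$, so the construction of $(i)$ cannot be reversed in general. The plan is to invoke Odaka's blow-up formalism from Section \ref{sec:odaka}, which reduces uniform K-stability of $q\circ p$ to verifying the defining inequality on semi-test-configurations of the form $(\B,rL-E)$ with $\B=\Bl_{\scI}(X\times\pr^1)$ for a flag ideal $\scI$. Such $\B$ carry a \emph{canonical} morphism to $X$ through the blow-down followed by the first projection, and condition (iv) of Definition \ref{def:testconfiguration} forces the map $\B\to Y$ in the given test-configuration structure to be $\B\to X\xrightarrow{q\circ p}Y$, which automatically factors through $p:X\to Z$. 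Each such $(\B,rL-E)$ is therefore simultaneously a semi-test-configuration for $p$, and the first paragraph identifies their Donaldson-Futaki invariants and norms. Uniform K-stability of $p$ then produces the required lower bound for $q\circ p$. The main obstacle is precisely this non-factorisation issue, and the restriction to uniform K-stability in the hypothesis reflects that the blow-up reduction is only available in the uniform setting.
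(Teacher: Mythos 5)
Your proposal is correct. Part (i) coincides with the paper's argument: composing a test-configuration for $p$ with $q$ yields a test-configuration for $q\circ p$ with the same Donaldson-Futaki invariant and minimum norm, since $(q\circ p)^*T=p^*(q^*T)$, and when $q$ is an isomorphism this is a bijection. For part (ii) you and the paper isolate the same obstruction --- a test-configuration for $q\circ p$ need not factor through $Z$ --- and the same cure --- pass to a model dominating $X\times\pr^1$, which then maps to $Z$ through $X$ --- but you package it differently. The paper argues by contradiction on an arbitrary destabilising test-configuration: it takes an equivariant resolution of indeterminacy $\Y\to X\times\pr^1$, perturbs the relatively semi-ample pullback $f^*\L$ to the relatively ample $\L-\delta E$ to obtain a genuine test-configuration for $p$, and uses continuity of the intersection numbers as $\delta\to 0$ together with the ``for all $\epsilon$'' quantifier. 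You instead invoke the flag-ideal reduction of Section \ref{sec:odaka} to restrict to semi-test-configurations $(\B,rL-E)$ with $\B=\Bl_{\scI}(X\times\pr^1)$, on which the structure map to $Y$ is forced (by irreducibility of the normal $\B$ and separatedness of $Y$, via condition (iv) of Definition \ref{def:testconfiguration}) to be the composite $\B\to X\to Y$, hence factors through $Z$. The two routes are equivalent in substance --- the Odaka reduction is itself proved by a perturbation of the same kind --- but yours outsources the $\delta$-perturbation to the already-established proposition, at the cost of using both directions of that equivalence, while the paper's version is self-contained within the proof. Your closing observation that the argument is confined to the uniform setting for exactly this reason matches the paper's.
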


\begin{proof} \

$(i)$ Note that if $p: (\X,\L)\to (Z,q^*T)$ is a test-configuration for $p: (X,L)\to (Z,q^*T)$, then $q\circ p: (\X,\L)\to (Y,T)$ is a test-configuration for $q\circ p: (X,L) \to (Y,T)$. The result follows easily from this. 

$(ii)$ Let $p: (\X,\L)\to (Y,T)$ be a test-configuration for $p\circ q: (\X,\L)\to (Y,T)$ which satisfies $$\DF_{q\circ p} (\X,\L) < \epsilon \|(\X,\L)\|_m.$$ Take an equivariant resolution of indeterminacy as in equation \eqref{resolution}: \begin{equation}\label{res-ind}
\begin{tikzcd}
\Y \arrow[swap]{d}{h} \arrow{dr}{f} &  \\
X\times\pr^1 \arrow[dotted]{r}{g} & \X
\end{tikzcd}
\end{equation}

The line bundle $f^*\L$ is relatively semi-ample over $\pr^1$. Letting $E$ be the exceptional divisor of $f$, the line bundle $\L-\delta E$ is relatively ample for $\delta$ sufficiently small. The natural map $h \circ p: (\Y,\L-\delta E)\to (Z,q^*T)$ gives a test-configuration for $q\circ p: (\X,\L)\to (Z,q^*T)$. Moreover, for $\delta$ sufficiently small, by continuity of the intersection numbers defining the Donaldson-Futaki invariant and the minimum norm, we have $\DF_{p}(\Y,\L-\delta E) < \epsilon \|(\Y,\L-\delta E)\|_m.$ Since this argument works for all $\epsilon$ sufficiently small, this contradicts the uniform K-stability of $p: (\X,\L)\to (Z,q^*T)$, proving the result.

\end{proof}

The same argument gives the following, as promised in Remark \ref{rmk:twistedKstability}:

\begin{corollary} A map $p: (X,L)\to (Y,T)$ is uniformly K-stable if and only if $(X,L)$ is uniformly twisted K-stable with respect to $p^*T$. \end{corollary}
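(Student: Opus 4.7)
The plan is to observe that one direction is immediate from the definitions and that the other follows from exactly the resolution-of-indeterminacy argument already used in the proof of Theorem \ref{thm:factorization}(ii).

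For the easy direction, suppose $(X,L)$ is uniformly twisted K-stable with respect to $p^*T$. As recorded in Remark \ref{rmk:twistedKstability}, for any test-configuration $p:(\X,\L)\to (Y,T)$ for the map $p$, the invariant $\DF_p(\X,\L)$ coincides with the twisted Donaldson-Futaki invariant of $(\X,\L)$ (as a test-configuration for $(X,L)$) with respect to the twisting line bundle $p^*T$. Since forgetting the map $\X\to Y$ yields a genuine test-configuration for $(X,L)$ with the same minimum norm, the uniform twisted lower bound on the twisted Donaldson-Futaki invariant translates directly into the desired uniform lower bound $\DF_p(\X,\L)\ge \epsilon \|(\X,\L)\|_m$.

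For the reverse direction, suppose $p$ is uniformly K-stable as a map, and assume for contradiction that $(X,L)$ is not uniformly twisted K-stable with respect to $p^*T$. Then for every $\epsilon>0$ there exists a test-configuration $(\X,\L)$ for $(X,L)$ (not a priori carrying a morphism to $Y$ extending $p$) with twisted Donaldson-Futaki invariant strictly less than $\epsilon\|(\X,\L)\|_m$. Take an equivariant resolution of indeterminacy
\[
\begin{tikzcd}
\Y \arrow[swap]{d}{h} \arrow{dr}{f} &  \\
X\times\pr^1 \arrow[dotted]{r}{g} & \X
\end{tikzcd}
\]
and let $E$ be the (effective) $f$-exceptional divisor. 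Then $f^*\L - \delta E$ is relatively ample over $\pr^1$ for $\delta>0$ sufficiently small. The composition of $h$ with the projection $X\times\pr^1\to Y$ followed by the trivial extension of $p$ produces an equivariant morphism $\tilde p:\Y\to Y$ extending $p$ on a general fibre, so that $\tilde p:(\Y,f^*\L-\delta E)\to (Y,T)$ is a \emph{bona fide} test-configuration for the map $p$.

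The final step is to compare numerical invariants. By birational invariance of the twisted Donaldson-Futaki invariant on a resolution of indeterminacy, and by continuity of the intersection numbers in $\delta$, both $\DF_{\tilde p}(\Y,f^*\L-\delta E)$ and $\|(\Y,f^*\L-\delta E)\|_m$ differ from $\DF_p(\X,\L)$ and $\|(\X,\L)\|_m$ by quantities that tend to zero with $\delta$. Hence, for $\delta$ small enough, the strict inequality $\DF_{\tilde p}(\Y,f^*\L-\delta E)<\epsilon \|(\Y,f^*\L-\delta E)\|_m$ still holds. Letting $\epsilon\to 0$ contradicts uniform K-stability of $p$. The main (mild) obstacle is to verify that the continuity estimate is uniform in the test-configuration for a fixed $\delta$-perturbation scheme, but this is ensured precisely because on the resolution $\Y$ the class $f^*\L$ is already pulled back, so the intersection numbers are polynomial in $\delta$ with leading term the original invariant.
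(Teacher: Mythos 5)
Your proposal is correct and follows essentially the same route as the paper: the paper also treats the direction from uniform twisted K-stability as immediate, and for the other direction invokes the resolution-of-indeterminacy and $\L-\delta E$ perturbation argument from Theorem \ref{thm:factorization}(ii), using continuity of the intersection numbers in $\delta$ to preserve the destabilising inequality. Your closing worry about uniformity in the test-configuration is not actually needed, since for each $\epsilon$ one fixes a single destabilising test-configuration before choosing $\delta$.
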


\begin{proof} The definition of twisted K-stability of $(X,L)$ involves taking a resolution of indeterminacy of a test-configuration $\Y\to \X$ as in \eqref{res-ind} above, and defining $$\DF_T(\X,\L):= \frac{n}{n+1}\mu_T(X,L^r)\L^{n+1} + \L^n.(K_{\X/\pr^1} + f^*T).$$ By the argument of Theorem \ref{thm:factorization}, by perturbing we can assume $(\X,\L)$ itself admits a map to $(X,L)$, hence a map to $(Y,T)$ by composing. This invariant clearly equals the Donaldson-Futaki invariant of $p: (X,L)\to (Y,H)$, hence uniform K-stability of $p$ implies uniform twisted K-stability of $(X,L)$ with respect to $p^*T$. The converse is obvious. \end{proof}
  
 \section{Fibrations}\label{sec:fibrations}
 
Let $f:U\to Y$ be a flat proper morphism between schemes of constant relative dimension $d$ and $L_{U}$ be a line bundle on $U$ that is relatively ample over $Y$.  We recall the construction of the CM-line bundle (see \cite[Section 2]{FR-CM} for a more detailed account).  The Knudson-Mumford  expansion \cite[Theorem 4]{KM-expansion} provides line bundles $\lambda_i$ for $i=0,\ldots, d$ on $Y$ and a natural polynomial expansion
$$ \det(\pi_{!} L_U^k) \simeq \lambda_{d+1}^{\binom{k}{d+1}} \otimes \lambda_{d}^{\binom{k}{d}} \otimes \cdots \otimes \lambda_0 \text{ for } k\ge 0.$$
Moreover the $\lambda_i$ commute with base-change.  The \emph{CM-line bundle} \cite[Definition 1]{PT-CMstability} with respect to $L_U$ is defined to be
\begin{equation}L_{CM} = \lambda_{d+1}^{d \mu + d(d+1)} \otimes \lambda_{d}^{-2(d+1)}\label{eq:defCMlinebundle}\end{equation}
where $\mu=\mu(U_y,L_U|_{U_y})$ is the slope of any fibre of $U$ (and the reader is warned our convention for $\mu$ differs to that of \cite{FR-CM}).

 Now suppose $p:B\to Y$ is a morphism from a normal projective variety $B$ and consider the fibre product

$$\begin{tikzcd}
X: = B\times_Y U \arrow{r}{p} \arrow{d}{f}   & U \arrow{d}{f}\\
B \arrow{r}{p} &Y
\end{tikzcd}$$

 \begin{remark}
The reader should have in mind here the case that $Y$ is some kind of moduli space of varieties or schemes with a universal family $U$.  Then $p:B\to Y$ carries the same data as the fibre product $X\to B$, which is a fibration whose fibres vary in the moduli space $Y$.
 \end{remark}

 Fixing an ample line bundle $L_B$ on $B$,   for $m$ sufficiently large the line bundle
 $$ L_X:= p^*L_U \otimes f^*L_B^m$$
 on $X$ is ample.  We wish to relate K-stability of $(X,L_X)$ with K-stability of the morphism 
$$p:(B,H)\to (Y,\delta L_{CM})$$  for some suitable constant $\delta=\delta(m)>0$ and $m\gg 0$.  To this end, suppose that $(\mathcal B,\mathcal L_\mathcal B,p)$ is a test-configuration for $p$ and set
$$\begin{tikzcd}
\mathcal X: = \mathcal B\times_Y U \arrow{r}{p} \arrow{d}{f}   & U \arrow{d}{f} \\
\mathcal B \arrow{r}{p} &Y
\end{tikzcd}$$
and
$$L_{\mathcal X}:= p^*L_U \otimes  f^*\mathcal L_{\mathcal B}^m.$$

\begin{lemma}
For $m$ sufficiently large $(\mathcal X,L_{\mathcal X})$ is a test-configuration for $(X,L)$.
\end{lemma}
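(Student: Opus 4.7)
The plan is to check each of the defining properties of a test-configuration for $(\mathcal X, L_{\mathcal X})$ in turn, with the only condition requiring largeness of $m$ being the relative ampleness of $L_{\mathcal X}$.

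First, I would observe that the $\mathbb C^*$-action and equivariant structure come for free from the fibre-product construction. Since $p:\mathcal B\to Y$ is $\mathbb C^*$-equivariant with $Y$ carrying the trivial action, the product action of $\mathbb C^*$ on $\mathcal B$ and the trivial action on $U$ descends to a $\mathbb C^*$-action on $\mathcal X = \mathcal B\times_Y U$ covering the usual action on $\mathbb P^1$ via $\mathcal X\to \mathcal B\to \mathbb P^1$. This action lifts to $p^*L_U$ (since $L_U$ lives on $U$ with its trivial action we can use the pullback structure) and to $f^*\mathcal L_{\mathcal B}^m$ (by pullback of the equivariant structure on $\mathcal L_{\mathcal B}$), and hence to $L_{\mathcal X}$.

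Next, I would verify flatness of $\mathcal X\to\mathbb P^1$: the map $\mathcal X\to \mathcal B$ is the base change of the flat map $U\to Y$, hence flat, and composing with $\mathcal B\to\mathbb P^1$ (flat by the test-configuration hypothesis) gives the required flatness. For the restriction over $\mathbb P^1\setminus\{0\}$, the test-configuration $\mathcal B$ restricts $\mathbb C^*$-equivariantly to $B\times\mathbb C$ with $\mathcal L_{\mathcal B}|_{\mathbb P^1\setminus 0}\cong L_B^s$ for some $s$, and base change along $p$ identifies $\mathcal X|_{\mathbb P^1\setminus 0}$ with $X\times\mathbb C$. Pulling this back, $L_{\mathcal X}|_{\mathbb P^1\setminus 0}$ is a $\mathbb C^*$-equivariantly trivial family with fibre $p^*L_U\otimes f^*L_B^{ms}$, which is a positive multiple of $L_X$. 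After passing to an appropriate power this matches the required isomorphism with exponent $r$.

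The one non-formal step is relative ampleness of $L_{\mathcal X}$ over $\mathbb P^1$, which will force the lower bound on $m$. Here I would apply the standard fact that if $\mathcal X\to \mathcal B$ is proper and $p^*L_U$ is relatively ample over $\mathcal B$, and $\mathcal L_{\mathcal B}$ is relatively ample over $\mathbb P^1$, then $p^*L_U\otimes f^*\mathcal L_{\mathcal B}^m$ is relatively ample over $\mathbb P^1$ for all $m\gg 0$ (this is e.g.\ a consequence of \cite[Proposition 1.7.10]{fulton-book}-style reasoning, or directly from the fact that relative ampleness is preserved under the composition of a relatively ample morphism with a morphism whose pullback of an ample class dominates the negative part; concretely one checks positivity of $L_{\mathcal X}^{\dim\mathcal X}\cdot Z$ on curves $Z$ contracted by $\mathcal X\to\mathbb P^1$). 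Since $p^*L_U$ is already relatively ample over $\mathcal B$ by base change, this is precisely the standard criterion.

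Once these points are checked, every clause of Definition \ref{def:testconfiguration} holds for $(\mathcal X, L_{\mathcal X})$ as a test-configuration for the polarised variety $(X,L_X)$ (treating the target as a point, since the lemma is about absolute K-stability of $X$). The main obstacle is simply the relative ampleness bound on $m$, which is where the ``sufficiently large'' hypothesis enters and is unavoidable since $p^*L_U$ by itself is only relatively ample over $\mathcal B$ rather than over $\mathbb P^1$.
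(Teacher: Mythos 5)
Your proof is correct and follows essentially the same route as the paper's, which simply notes that flatness follows from base change and composition, that the $\mathbb C^*$-action lifts, and that the remaining properties are ``easily verified.'' You have usefully made explicit the one non-formal point the paper leaves implicit, namely that relative ampleness of $p^*L_U\otimes f^*\mathcal L_{\mathcal B}^m$ over $\mathbb P^1$ is the standard EGA-type statement forcing $m\gg 0$.
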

\begin{proof}
As flatness commutes with basechange, the fact that $U$ is flat over $Y$ and $\mathcal B$ is flat over $\mathbb P^1$ imply that $\mathcal X$ is also flat over $\mathbb P^1$.  The $\mathbb C^*$-action on $\mathcal B$ lifts to $(\mathcal X, L_\mathcal X)$, and the properties needed to make this data a test-configuration are all easily verified.
\end{proof}
Now let $V = L_{X_b}^n$ where $X_b$ is a fibre of $X\to B$ over some (resp.\ any) point $b\in B$ (so $V$ is the volume of the fibre)  and set
$$ \delta = \frac{1}{(n-b+1)V}.$$

\begin{proposition}\label{prop:DFfibration}
It holds that
\begin{equation}DF(\mathcal X,\mathcal L_{\mathcal X})  = V \binom{n}{b}  DF_{\delta p^* L_{CM}}(\mathcal B,\mathcal L_{\mathcal B}) m^{b} + O(m^{b-1}).\label{eq:DFfibration}\end{equation}
\end{proposition}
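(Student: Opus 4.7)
The plan is to expand $DF(\mathcal X, \mathcal L_{\mathcal X})$ as a polynomial in $m$ and show that the leading nonzero term occurs at order $m^b$ with the claimed coefficient. First I would set up the intersection calculus on $\mathcal X = \mathcal B\times_Y U$. By flat base change one has $f_*\circ p^* = p^*\circ f_*$ (where on the left $f_*$ refers to $\mathcal X \to \mathcal B$ and on the right to $U \to Y$), and by functoriality the relative canonical decomposes as $K_{\mathcal X/\mathbb P^1} = p^*K_{U/Y} + f^*K_{\mathcal B/\mathbb P^1}$. Expanding $\mathcal L_{\mathcal X} = p^*L_U + mf^*\mathcal L_{\mathcal B}$ binomially and applying the projection formula, each relevant intersection number ($\mathcal L_{\mathcal X}^{n+1}$, $\mathcal L_{\mathcal X}^n \cdot K_{\mathcal X/\mathbb P^1}$, $L_X^n$, and $K_X\cdot L_X^{n-1}$) reduces to a polynomial in $m$ whose coefficients are intersections on $\mathcal B$ (or $B$) of powers of $\mathcal L_{\mathcal B}$ with pullbacks of fibre pushforwards $f_*(c_1(L_U)^j\cdot c_1(K_{U/Y})^{\epsilon})$ for $\epsilon\in\{0,1\}$. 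For dimensional reasons only the terms with $j+\epsilon\in\{d,d+1\}$ contribute in the top two orders, yielding the fibre volume $V$, the fibre slope $-\mu V$, and the divisor classes $f_*(c_1(L_U)^{d+1})$ and $f_*(c_1(L_U)^d\cdot K_{U/Y})$ on $Y$.

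Next I would identify these pushforwards with the Knudson-Mumford bundles. Applying Grothendieck-Riemann-Roch to $f\colon U\to Y$ and matching the Chern character expansion of $f_!L_U^k$ against $\det f_!(L_U^k)\simeq \prod\lambda_i^{\binom{k}{i}}$, one obtains
\[
c_1(\lambda_{d+1}) = f_*\bigl(c_1(L_U)^{d+1}\bigr), \qquad f_*\bigl(c_1(L_U)^d\cdot K_{U/Y}\bigr) = d\,c_1(\lambda_{d+1}) - 2\,c_1(\lambda_d).
\]
Substituting into the definition \eqref{eq:defCMlinebundle} then gives the compact expression $c_1(L_{CM}) = d\mu\, c_1(\lambda_{d+1}) + (d+1)\,f_*\bigl(c_1(L_U)^d\cdot K_{U/Y}\bigr)$, which is the form in which the CM class will naturally appear on the $\mathcal B$-side.

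Expanding the slope $\mu(X,L_X) = -K_X\cdot L_X^{n-1}/L_X^n$ as a Laurent series in $m$ shows its leading term is $d\mu/n$. Combined with the binomial identity $\tfrac{n}{n+1}\cdot\tfrac{d}{n}\binom{n+1}{b+1} = \binom{n}{b+1}$, this forces the $m^{b+1}$ contributions to $\tfrac{n}{n+1}\mu(X,L_X)\mathcal L_{\mathcal X}^{n+1}$ and to $\mathcal L_{\mathcal X}^n\cdot K_{\mathcal X/\mathbb P^1}$ to cancel exactly, confirming $DF(\mathcal X,\mathcal L_{\mathcal X}) = O(m^b)$ in the first place.

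The main obstacle is the bookkeeping required to extract and match the $m^b$ coefficient. This coefficient receives contributions from (i) the $m^b$ terms of both intersection expansions times the leading slope $d\mu/n$, and (ii) the subleading $1/m$ correction of $\mu(X,L_X)$ times the leading $m^{b+1}$ intersection $\mathcal L_{\mathcal X}^{n+1}$. After repeated use of identities such as $\tfrac{1}{n+1}\binom{n+1}{b+1} = \tfrac{1}{b+1}\binom{n}{b}$ and $\tfrac{1}{n+1}\binom{n+1}{b} = \tfrac{1}{d+1}\binom{n}{b}$, together with the choice $\delta = 1/((d+1)V)$ (so that $V\binom{n}{b}\delta = \tfrac{1}{d+1}\binom{n}{b}$), a direct comparison shows the $m^b$ coefficient is precisely
\[
V\binom{n}{b}\Bigl[\tfrac{b}{b+1}\,\mu_{\delta p^*L_{CM}}(B,L_B)\,\mathcal L_{\mathcal B}^{b+1} + \mathcal L_{\mathcal B}^b\cdot(K_{\mathcal B/\mathbb P^1} + \delta p^*L_{CM})\Bigr] = V\binom{n}{b}\,DF_{\delta p^*L_{CM}}(\mathcal B,\mathcal L_{\mathcal B}).
\]
The essential point is that the contributions involving $c_1(\lambda_{d+1})$ coming from the subleading slope correction combine with the others so that the coefficient of $c_1(\lambda_{d+1})$ equals $d\mu$, which is exactly what is demanded by the formula for $c_1(L_{CM})$; this is the algebraic reason $V$ appears in the denominator of $\delta$.
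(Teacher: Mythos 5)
Your proposal is correct and follows essentially the same route as the paper's main proof: expand $\mathcal L_{\mathcal X}=p^*L_U+mf^*\mathcal L_{\mathcal B}$ and the slope $\mu(X,L_X)$ in powers of $m$, observe the cancellation of the $m^{b+1}$ term, and identify the fibrewise pushforwards in the $m^b$ coefficient with $c_1(L_{CM})$ via Grothendieck--Riemann--Roch (the paper cites this last identification from the literature as equation \eqref{eq:chernclasscm}, whereas you rederive it from the Knudson--Mumford expansion, but the content is identical). The key identities you quote, including $\tfrac{d}{n+1}\binom{n+1}{b+1}=\binom{n}{b+1}$ for the cancellation and $f_*\bigl(c_1(L_U)^d\cdot K_{U/Y}\bigr)=d\,c_1(\lambda_{d+1})-2\,c_1(\lambda_d)$, all check out.
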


\begin{corollary}\label{cor:Kstabfibration}
For $m$ sufficiently large, if $(X,L_X)$ is a K-semistable variety then $p:(B,L_B)\to (Y,\delta L_{CM})$  is a K-semistable map.
\end{corollary}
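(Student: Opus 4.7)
The approach is to argue by contrapositive directly from Proposition \ref{prop:DFfibration}. Suppose that $p:(B,L_B) \to (Y,\delta L_{CM})$ is not K-semistable. Then by definition there exists a test-configuration $(\mathcal B, \mathcal L_\mathcal B)$ for $p$ with
$$DF_{\delta p^* L_{CM}}(\mathcal B, \mathcal L_\mathcal B) < 0.$$
I would fix such a destabilising test-configuration once and for all.

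The lemma preceding Proposition \ref{prop:DFfibration} ensures that, taking $m$ large enough so that $L_\mathcal X = p^* L_U \otimes f^* \mathcal L_\mathcal B^m$ is relatively ample, the fibre product
$$\mathcal X := \mathcal B \times_Y U, \qquad L_\mathcal X := p^* L_U \otimes f^* \mathcal L_\mathcal B^m$$
is a genuine test-configuration for $(X, L_X)$. Applying Proposition \ref{prop:DFfibration}, its Donaldson-Futaki invariant admits the expansion
$$DF(\mathcal X, L_\mathcal X) = V \binom{n}{b} DF_{\delta p^* L_{CM}}(\mathcal B, \mathcal L_\mathcal B)\, m^{b} + O(m^{b-1}),$$
whose leading coefficient in $m$ is strictly negative by our assumption. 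Hence for all $m$ sufficiently large (with threshold determined by the chosen $(\mathcal B, \mathcal L_\mathcal B)$ and the subleading terms), $DF(\mathcal X, L_\mathcal X) < 0$, exhibiting a destabilising test-configuration for $(X, L_X)$. This is the required contrapositive.

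All genuine content has already been packaged into Proposition \ref{prop:DFfibration}, so the deduction of the corollary is essentially formal; the main obstacle lies upstream in establishing the asymptotic expansion of the Donaldson-Futaki invariant through the Knudsen-Mumford expansion and the definition of the CM-line bundle. The one subtlety worth flagging is that the threshold on $m$ depends on the chosen destabilising test-configuration, so the statement should be read as: given any putative destabiliser $(\mathcal B, \mathcal L_\mathcal B)$ of the map, the corresponding fibre-product test-configuration $(\mathcal X, L_\mathcal X)$ witnesses the failure of K-semistability of $(X, L_X)$ for all $m$ beyond that threshold.
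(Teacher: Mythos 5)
Your argument is correct and is precisely the deduction the paper intends: the corollary is stated as an immediate consequence of Proposition \ref{prop:DFfibration}, obtained by the contrapositive via the fibre-product test-configuration, exactly as you describe. Your remark about the threshold on $m$ depending on the chosen destabiliser is a sensible reading of the quantifiers and does not affect the validity of the statement, since K-semistability of the fixed map $p$ does not depend on $m$.
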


\begin{proof}[Proof of Proposition \ref{prop:DFfibration}]
Let $\dim B=b$ and $\dim X = n$ and set
$$ \mu: = \mu(X_b,L_{X}|_{X_b}) = -\frac{K_{X_b}.L_{X_b}^{n-b-1}}{L_{X_b}^{n-b}}$$
where $X_b$ denotes a (resp.\ any) fibre of $X\to B$, which has dimension $n-b$.    The CM line bundle commutes with base change, so $p^*L_{CM}$ is precisely the CM line bundle of the fibration $f:X\to B$ computed with $p^*L_U$.  The first Chern class of the CM line bundle is easily calculated with Grothendieck-Riemann-Roch \cite[Corollary 18.3.1 (c)]{fulton-book}, which gives \cite[p2]{FR-CM}
\begin{equation} c_1(p^*L_{CM}) = f_* [(n-b)\mu c_1(p^*L_U)^{n-b+1} + (n-b+1) c_1(K_{X/B}) c_1(p^*L_U)^{n-b}].\label{eq:chernclasscm}\end{equation}
Here $c_1(K_{X/B})$ is the cycle which is the degree two part of the relative singular Todd class \cite[p354]{fulton-book}.  Our first task is to calculate $\mu(X,L_X)$ asymptotically for large $m$.  To ease exposition we shall drop the pullback and use additive notation, so $L_X = L_U + m L_B$.  Then $L_X^n = (L_U + mL_B)^m = \binom{n}{b} m^b L_B^b L_U^{n-b} + \binom{n}{b-1} m^{b-1} L_B^{b-1} L_U^{n+1-b} + O(m^{b-2})$ and similarly for $L_X^{n-1}$.    Algebraic manipulation and the  projection formula yields
\begin{equation}\label{eq:expansionslopefibration}
 \mu(X,L_X) = \frac{-K_X. L_X^{n-1}}{L_X^n} = \lambda_0 + \lambda_1 m^{-1} + O(m^{-2})
 \end{equation} where
\begin{align*}
\lambda_0 &= \frac{n-b}{n} \mu\\
\lambda_1&= -\frac{b}{n(L_B^b)}( \delta L_B^{b-1} p^* L_{CM} + L_B^{b-1}K_B) = \frac{b}{n} \mu_{T}(B,L_B)\end{align*}
where $T=\delta p^*L_{CM}$.  A similar calculation yields the desired Donaldson-Futaki invariant, which by definition is equal to
\begin{align*}
DF(\mathcal X,\mathcal L_\mathcal X) &= \frac{n}{n+1} \mu(X,L) \mathcal L_\mathcal X^{n+1} + K_{\mathcal X/\mathbb P^1} \mathcal L_{\mathcal X}^n\\
&=\frac{n}{n+1}(\lambda_0 + \lambda_1 m^{-1} + O(m^{-2}))\mathcal L_{\mathcal X}^{n+1} + K_{\mathcal X/\mathbb P^1} \mathcal L_{\mathcal X}^n.\end{align*}
Expand $\mathcal L_{\mathcal X}^{n+1} = (p^* L_U + mf^* \mathcal L_{\mathcal B})^{n+1}$ and extract the top two powers of $m$, and similarly for $\mathcal L_{\mathcal X}^{n}$.  Algebraic manipulation yields that the $m^{b+1}$ term in $DF(\mathcal X,\mathcal L_\mathcal X)$ vanishes, and the $O(m^{b})$ term is  
$$\binom{n}{b}\left( \mathcal L_\mathcal B^b K_{X/\mathbb P^1} p^*L_U^{n-b} + \frac{b}{b+1} \mu_T(B,L_B) \mathcal L_B^{b+1} p^*L_U^{n-b} + \frac{\mu(n-b)}{n+1-b} \mathcal L_\mathcal B^n p^* L_U^{n+1-b}\right).$$
Along with the observation that $\mathcal K_{\mathcal X/\mathbb P^1} = K_{\mathcal X/\mathcal B} + K_{\mathcal B/\mathbb P^1}$, an application of the projection formula along with the formula for $c_1(p^*L_{CM})$ \eqref{eq:chernclasscm} yields \eqref{eq:DFfibration}.
\end{proof}

\begin{proof}[Alternative Proof of Proposition \ref{prop:DFfibration}]
We sketch a proof that does not require Grothen-dieck-Riemann-Roch.  Again $\dim X=n$ and $\dim B=b$.  First observe that if $E$ is a vector bundle of rank $r_E$ on $(B,L_B)$ then the Euler-characteristic satisfies
$$ \chi(E\otimes L_B^p) = r_E \chi(L^p) + \frac{p^{b-1}}{(b-1)!} \int_B c_1(E) c_1(L)^{b-1}+ O(p^{b-1}).$$
(This can be seen by assuming $L_B$ to be very ample and taking hyperplane sections, but of course can also be seen from Grothendieck-Riemann-Roch).  Thus if $E_k$ is a sequence of vector bundles of rank $r_{k}$ then
$$ \chi(E_k\otimes L^{\otimes mk}) = r_k \chi(L_B^{mk}) + \frac{m^{b-1} k^{b-1}}{(b-1)!} \int_B c_1(E_k) c_1(L^{b-1})+O(m^{b-2})$$
where the $O(m^{b-2})$ term also depends on $k$.  Now let $E_k = \pi_{!}(L_U^{\otimes k})$ for large $k$.  The using the notation from \eqref{eq:defCMlinebundle} there are line bundles $\lambda_i$ on $Y$ such that
$$\det(E_k) = \lambda_{n-b+1}^{\binom{k}{n-b+1}} \otimes \lambda_{n-b}^{\binom{k}{n-b}}\otimes \cdots \otimes \lambda_{0}.$$
Hence by the projection formula
\begin{align*}\chi(L_X^k) &= \chi(p^* L_U^k \otimes\pi^* L_B^{mk}) = \chi(\pi_{!} p^*L_U^k \otimes L_B^{mk}) \\
&=r_k \chi(L_B^{mk}) + \frac{m^{b-1} k^{b-1}}{(b-1)!}  \int_B c_1(p^*E_k) c_1(L_B^{b-1}) +O(m^{b-2})\\
&=r_k \chi(L_B^{mk}) + \frac{m^{b-1} k^{b-1}}{(b-1)!} \left( \binom{k}{n-b+1} \int_B c_1(p^*\lambda_{n-b+1}) c_1(L_B)^{b-1}\right)\\
&+\frac{m^{b-1} k^{b-1}}{(b-1)!} \left(\binom{k}{n-b} \int_B c_1(p^*\lambda_{n-b}) c_1(L_B)^{b-1}\right) +O(k^{n-2}) + O(m^{b-2}).
\end{align*}
Now $r_k = rank (E_k)$ is the Hilbert-polynomial of the fibre of $U\to Y$, and so one can extract the $m^b$ term and $m^{b-1}$ term in the top two leading order terms of $\chi(L_X^k)$ in $k$.  Algebraic manipulation then gives the expansion of the slope $\mu(X,L)$ is as stated as in \eqref{eq:expansionslopefibration}.  The proof for the Donaldson-Futaki invariant is a similar calculation on the total space $\mathcal X$, and is left to the reader.\end{proof}

\begin{remark}
In the above we do not assume that $L_{CM}$ has any positivity, and in fact there are examples for which it is negative \cite[Example 5.2]{FR-CM}.  This is the only case we know where K-stability of a map $p:(X,L)\to (Y,T)$ may be interesting without any positivity assumptions on $T$.
\end{remark}

\begin{remark}[Converse] The converse to Corollary \ref{cor:Kstabfibration}  clearly requires some stability hypothesis of the fibres of $X$ (as can be seen if $X$ is a product).   We speculate that with some such hypothesis (for instance if one assumes they are canonically polarised or uniformly K-stable) then stability of $X\to C$ is equivalent to stability of the map $p: C\to Y$ (either assuming canonical polarisations, or otherwise taking $m$ to be sufficiently large).

The two difficulties in proving such a statement are (i) the fact that a priori there can be test-configurations for $X$ that have limits that are not themselves fibrations and (ii) how large $m$ must be taken should be uniform over all test-configurations for $X$ that need to be considered.   This may be related to the fact that if a KSBA stable variety admits a fibration to a stable base with stable fibres then this fibration structure deforms uniquely for small deformations \cite{Patakfalvi-fibered}.

\end{remark}

\begin{remark}[Stacks]\label{rmk:stacks}As the reader is surely aware, in general moduli spaces do not come with universal families due to the presence of automorphisms.   But one can run the same argument as above (which is purely formal) if $Y$ is instead taken to be a Deligne-Mumford stack.   The main difference is that a test-configuration for a morphism $B\to Y$ will itself be a stack, but one can define the Donaldson-Futaki invariant in precisely the same way as before (for instance using the same intersection formula \eqref{eq:DFintersection}). 

For example, if $B$ is a curve and $Y=\M_g$ the (proper) moduli stack of stable curves (of some fixed genus say) then $X\to B$ is a fibered surface whose stability is related to stability of the map $p:B\to Y$.  For another example, $Y$ could be the KSBA moduli stack of canonically polarised semi-log-canonical varieties. By definition of a morphism of stacks, from a map $\B\to Y$ one obtains a family $\X\to \B$ whose fibres are KSBA stable varieties. Thus if $B\to Y$ is a K-unstable map (of stacks), $(X,L_X)$ is K-unstable for $m \gg 0$, without any further hypotheses needed. This suggests that for the study of stability of fibrations, the more useful notion of K-stability of maps should allow maps to stacks.\end{remark} 

\begin{remark}[Projective bundles] \label{rmk:bundles}Another examples of a fibration that has attracted significant interest from the point of view of K-stability and canonical K\"ahler metrics is that of the projectivisation $\mathbb P(E)$ of a vector bundle $E$ over $(B,L_B)$ (for instance \cite{apostolov-remark,apostolov-ruled,aposolovIII,bronnle-extremal,Hong-hermitian,keller-mumfordsemistable,KR-notechow,li-extremal,RT-obstruction}).

From the point of view of this paper it makes sense to consider the moduli stack $\M$ of projective space (of course the coarse moduli space of $\M$ is a single point, but the stack which is clearly not Deligne-Mumford is much richer).  Then any projective bundle $\mathbb P(E)\to B$ is induced by a map $p:B\to \M$ which must be K-semistable if $(\mathbb P(E), mL_B + \scO_{\pr(E)}(1))$ is K-semistable for $m\gg 0$.  This is slightly different, but presumably related to, requiring that $(B,L_B)$ be stable and that $E$ be a stable vector bundle, which are the kind of hypothesis usually made in the references above.

\end{remark}

\begin{remark}[Comparison with Abramovich-Vistoli]
The compactification of the moduli space of stable fibred surfaces $X\to B$ is considered by Abramovich-Vistoli in \cite{ AbramovichVistoli-complete,AbramovichVistoli-compactifying}.  In that paper the authors compactify the space of fibrations $X\to B$ by stable curves over a one dimensional base $B$ such that the induced map $B\to M_g$ is a stable map (in the sense of Kontsevich).   The points in the boundary of their moduli space consist of certain maps $\tilde{B}\to \mathcal M_g$ where $\mathcal M_g$ is the moduli stack of curves, and $\tilde{B}$ is a curve endowed with additional stack structure.   This further suggests that it is interesting to consider K-stability of maps whose \emph{domain} is a stack.  We refer to \cite{RT-orbifold} for prior work towards K-stability for certain Deligne-Mumford stacks in the absolute case. The generalisation to pairs is taken up in \cite{Ascher}.
\end{remark}

\section{Moduli spaces of maps}

\subsection{Preliminaries on semi-log canonical pairs}

We recall some definitions and results we which require from the minimal model program. Most importantly, we shall define semi-log canonical (or slc) varieties, which are the higher dimensional analogue of nodal curves.

As such varieties are typically not irreducible, the most effective way to study them is through their normalisation.  Recall that a nodal curve $C$ is encoded by the triple $(\bar C, \bar F, \tau)$, where $\bar C$ is its normalisation, $\bar F$ is the preimage of the nodes and $\tau: \bar F \to \bar F$ is the involution which determines which pairs of points are identified in $C$. We will use a similar technique to study slc varieties, following closely ideas of Koll\'ar \cite[Section 5]{kollar-singularities}.

\begin{definition}\label{def:conductor}
Let $(Y,D)$ be a pair consisting of a normal variety $Y$ and an effective $\Q$-Weil divisor $D$ such that $K_X+D$ is $\Q$-Cartier. Let $f: Y\to X$ be a log resolution of singularities, so that $f_*^{-1}D\cup E$ has simple normal crossing singularities. Write $$K_Y- f^*(K_X +D) = \sum a_i E_i,$$ where either $E_i$ is exceptional or the proper transform of a component of $D$. We say $(X,D)$ is \emph{log canonical} if $a_i \geq -1$ for all $i$.

Now let $X$ be an equidimensional $\Q$-Gorenstein projective variety. We say $X$ is \emph{demi-normal} if it satisfies Serre's S2 condition and is nodal in codimension one. For a demi-normal variety $X$, denote by $\pi: \bar{X} \to X$ its normalisation. The preimage of the double normal crossing locus of $X$ is called the \emph{conductor} of $\pi$ and denoted $\bar F$. We say that $X$ is \emph{semi-log canonical} if $(\bar X, \bar F)$ is log canonical. \end{definition}

The map $\bar F \to F$ induces an involution between the normalisations $\tau: \bar F^{\nu} \to \bar F^{\nu}$, which is fixed point free in codimension one \cite[p189]{kollar-singularities}. For this it is essential to work on the normalisation of $\bar F$: there are examples in which a point in $F$ has three preimages in $\bar F$, so no involution can exist \cite[p189]{kollar-singularities}. Then as in the curve case, the triple $(\bar X, \bar F, \tau)$ determines $X$ \cite[Theorem 5.13]{kollar-singularities}. Moreover, we have \cite[Equation (5.7.5)]{kollar-singularities} \begin{equation}\label{divisorless-normalisation}\pi^*(K_X+F) \sim_{\Q} K_{\bar X} + \bar F.\end{equation}

We will require a similar technique for pairs, so let $D$ be a $\Q$-Weil divisor on $X$ whose support does not contain any codimension one component of the singular locus of $X$. Then $D$ is $\Q$-Cartier in codimensione one, so we can define $\bar D$ as the closure of the pullback $\pi^*D$ on the $\Q$-Cartier locus.

\begin{definition}\label{def:slc} We say a pair $(X,D)$ is \emph{semi-log canonical} if $K_X+D$ is $\Q$-Cartier and $(\bar X, \bar D+ \bar F)$ is log canonical. \end{definition} The analogue of equation (\ref{divisorless-normalisation}) for pairs states that $$\pi^*(K_X+D) \sim_{\Q} K_{\bar X} + \bar F + \bar D.$$

In order to recover $(X,D)$ from its normalisation, we need to consider the corresponding involution. Choose $m$ such that $mD$ is integral and $m(K_{\bar X}+\bar F+\bar D)$ is Cartier. For $\sigma: \bar F^{\nu}\to \bar X$ the induced map, one defines an effective $\Q$-Cartier divisor on the normalisation $\bar F^{\nu}$ called the \emph{different}, denoted $\Diff_{\bar F^{\nu}}(\bar D)$, in such a way that:
\begin{enumerate}[(i)]
\item $m\Diff_{\bar F^{\nu}}(D)$ is integral and $m(K_{\bar F^{\nu}}+\Diff_{\bar F^{\nu}}(\bar D))$ is Cartier,
\item $\sigma^*\pi^*(K_X+D) \sim_{\Q}\sigma^*(K_{\bar X} + \bar F + \bar D)\sim_{\Q} K_{\bar{F}^{\nu}}+\Diff_{F^{\nu}}(\bar D).$
\end{enumerate} We refer to \cite[Section 5.11]{kollar-singularities} for further details. Again, the key point is that $\Diff_{\bar F^{\nu}}(\bar D)$ is a $\tau$-invariant divisor, and the data $(\bar{X},\bar F+\bar D)$  together with the involution $\tau$ of $(\bar F^{\nu}, \Diff_{\bar F^{\nu}}(\bar D))$ determines $(X, D)$ \cite[Theorem 5.38]{kollar-singularities}. A simple consequence of the definition is the following. 

\begin{lemma} Suppose $K_X+D$ is ample. Then so is $K_{\bar{F}^{\nu}}+\Diff_{F^{\nu}}(\bar D)$. \end{lemma}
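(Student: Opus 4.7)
The plan is to deduce ampleness of $K_{\bar{F}^{\nu}}+\Diff_{F^{\nu}}(\bar D)$ directly from the $\Q$-linear equivalence
$$\sigma^*\pi^*(K_X+D) \sim_{\Q} K_{\bar{F}^{\nu}}+\Diff_{F^{\nu}}(\bar D)$$
recalled just above the lemma, where $\pi:\bar X\to X$ is the normalisation and $\sigma:\bar F^{\nu}\to \bar X$ is the composition of the normalisation $\bar F^{\nu}\to \bar F$ with the closed immersion $\bar F\hookrightarrow \bar X$. Since $\Q$-linear equivalence preserves ampleness, it suffices to show that $\sigma^*\pi^*(K_X+D)$ is ample on $\bar F^{\nu}$.

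First I would observe that $\pi:\bar X\to X$ is a finite morphism: $X$ is demi-normal and hence in particular reduced and equidimensional, so its normalisation is finite. Likewise $\bar F^{\nu}\to \bar F$ is finite as the normalisation of a reduced scheme, and $\bar F\hookrightarrow \bar X$ is a closed immersion, so the composition $\sigma$ is also finite. Therefore $\sigma\circ\pi$ is a finite morphism from $\bar F^{\nu}$ to $X$.

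The main step is then the standard fact that the pullback of an ample $\Q$-line bundle under a finite morphism is ample (this follows from the Nakai--Moishezon criterion, or more directly from the characterisation of ampleness in terms of some power being very ample combined with the observation that finite morphisms of projective schemes are affine and proper). Applied to the finite morphism $\sigma\circ\pi$ and the ample class $K_X+D$, this gives ampleness of $\sigma^*\pi^*(K_X+D)$, and the lemma follows from the displayed $\Q$-linear equivalence.

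There is no real obstacle; the only subtle point is to be sure the maps in play really are finite, which in turn uses the demi-normal hypothesis on $X$ (to ensure $\pi$ is finite and not merely birational) and the fact that we normalise $\bar F$ before working with it. Once this is in place, the argument is a one-line invocation of preservation of ampleness under finite pullback.
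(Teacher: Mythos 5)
Your argument is correct and is exactly the paper's proof: the authors likewise invoke the $\Q$-linear equivalence $\sigma^*\pi^*(K_X+D) \sim_{\Q} K_{\bar{F}^{\nu}}+\Diff_{\bar F^{\nu}}(\bar D)$ defining the different together with the finiteness of $\pi\circ\sigma$, and conclude by the fact that ampleness is preserved under finite pullback. Your additional care in checking that each map in the composition really is finite is a welcome elaboration but not a departure from the paper's reasoning.
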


\begin{proof} This follows immediately from the formula defining the different, noting that $\pi\circ\sigma$ is finite.\end{proof}

We will also need some information on the singularities of the pair $(\bar F^{\nu}, \Diff_{\bar F^{\nu}}(\bar D))$.

\begin{proposition} Suppose $(X,D)$ is slc. Then the pair $(\bar F^{\nu}, \Diff_{\bar F^{\nu}}(\bar D))$ is log canonical. \end{proposition}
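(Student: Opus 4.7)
The plan is to deduce this from the standard adjunction formula for log canonical pairs. By the definition of semi-log canonical (Definition \ref{def:slc}), the pair $(\bar X, \bar F + \bar D)$ on the normalisation is log canonical. The divisor $\bar F$ is reduced (it is the conductor), and the second bullet point recalled just before the statement provides precisely the adjunction identity
\[
\sigma^*(K_{\bar X} + \bar F + \bar D) \sim_{\Q} K_{\bar F^\nu} + \Diff_{\bar F^\nu}(\bar D),
\]
so $\Diff_{\bar F^\nu}(\bar D)$ is by construction the divisor for which log-canonicity on $\bar F^\nu$ is the natural output of restricting the log-canonical structure upstairs.

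Thus what must be invoked is the version of adjunction which asserts that if $(Z, S + B)$ is a log canonical pair with $S$ a reduced divisor that is $\Q$-Cartier in codimension one, then the pair $(S^\nu, \Diff_{S^\nu}(B))$ on the normalisation of $S$ is again log canonical. This is the content of Kollár's adjunction theorem for lc pairs (see \cite[Section 4]{kollar-singularities}, specifically the statement that log canonicity descends to the different on the normalisation of a reduced boundary component). Applying this with $Z = \bar X$, $S = \bar F$ and $B = \bar D$ gives log canonicity of $(\bar F^\nu, \Diff_{\bar F^\nu}(\bar D))$ at once.

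The only point to verify is that the hypotheses of the adjunction theorem are all in place: $\bar X$ is normal by construction; $K_{\bar X} + \bar F + \bar D$ is $\Q$-Cartier because the different is already set up (the integer $m$ from the discussion above adjusts this); $\bar F$ is reduced; and $\bar D$ has no component contained in $\bar F$ since the support of $D$ was assumed to avoid the codimension-one part of the singular locus of $X$. Given these, no extra work is required beyond citing the adjunction result. If one preferred a more hands-on approach, one could instead take a log resolution of $(\bar X, \bar F + \bar D)$, restrict to the strict transform of $\bar F$ and read off the discrepancies on $\bar F^\nu$ directly, but the efficient route is simply to apply the adjunction statement for lc pairs as a black box; the main (and essentially only) obstacle is making sure one has the right formulation of adjunction for non-normal boundary components and the correct definition of the different, both of which are already recorded in the excerpt.
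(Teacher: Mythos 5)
Your proposal is correct and is exactly the paper's argument: the paper's proof reads, in full, ``This follows from adjunction, since $(\bar X, \bar D+ \bar F)$ is log canonical,'' which is precisely the application of Koll\'ar's adjunction theorem to the reduced boundary component $\bar F$ that you spell out. Your version merely makes explicit the hypotheses being checked and the reference to the different, so there is nothing to add.
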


\begin{proof} This follows from adjunction, since $(\bar X, \bar D+ \bar F)$ is log canonical. \end{proof}

\subsection{Moduli}

Let X be a variety and let $D$ be a divisor such that the pair $(X,D)$ is slc. In this section we construct a moduli space of stable maps $p: ((X,D);L)\to (Y,H)$ such that $L=K_X +D+p^*H$ is an ample $\Q$-Cartier divisor. We sometimes abbreviate this data to $(X,D)$ and simply call this a  ``stable map''. Remark \ref{rmk:generaltype} ensures that these are simply canonically polarised uniformly K-stable maps. It will be important in our construction that $D$ is a genuine Weil divisor, rather than merely a $\Q$-Weil divisor. 

\begin{remark}\label{rmk:suffample}For technical reasons, we require that $H$ is ``sufficiently ample''. Precisely, fixing an arbitrary ample line bundle $M$ on $Y$, we will require that $H$ satisfies $H- 2n M$ is ample, where $n=\dim X$. For example, this applies for $H=(2n+1)M$.\end{remark}

Our main result is as follows. 

\begin{theorem}\label{moduli-space-existence} The moduli functor of stable maps is coarsely represented by a separated projective scheme.\end{theorem}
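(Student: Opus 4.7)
The plan is to follow Alexeev's strategy in dimension two closely, substituting the recent advances in the minimal model program for slc pairs (Hacon--McKernan--Xu, Kov\'acs--Patakfalvi, Birkar, Koll\'ar) wherever Alexeev's argument relied on dimension-two-specific input. The four classical steps are boundedness, local closedness inside a Hilbert scheme, separatedness and properness via the valuative criterion, and projectivity.

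First, for boundedness, I would fix the discrete invariants (dimension of $X$, the volume $L^n$, the numerical class of $p^*H$, and the coefficient set of $D$) and invoke the recent boundedness theorem for stable slc pairs of fixed volume and coefficient set to conclude that the underlying $(X,D)$ vary in a bounded family. Once the source is bounded, the morphism $p\colon X\to Y$ with $p^*H$ in a fixed numerical class lies in a finite-type Hom-scheme, so a uniform $k$ exists for which $L^k$ is very ample across the entire family. Hence every stable map embeds as a point of a fixed product of Hilbert schemes of $Y\times \pr^N$ parametrising the graph of $p$ together with the divisor $D$. The sufficient ampleness assumption on $H$ (namely that $H-2nT'$ is nef for some ample $T'$) is precisely what guarantees that the graph of $p$, and hence $p$ itself, can be recovered from this data in a flat manner and that powers of $p^*H$ behave uniformly over the family.

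Second, local closedness: inside this product of Hilbert schemes, the locus parametrising stable maps is cut out by the condition that fibres are slc with $L = K_{X}+D+p^*H$ ample, together with the requirement that the divisor-and-graph combinatorics restrict fibrewise to $D$ and to $\Gamma_p$. These are locally closed conditions by standard deformation theory for slc pairs due to Koll\'ar and Kov\'acs--Patakfalvi. Then separatedness and properness follow from the valuative criterion over DVRs. Separatedness is a consequence of the uniqueness of log canonical models: if two families agree on the generic fibre, their central fibres are canonically identified as lc models of a common resolution, and the two maps to $Y$ agree because both are determined by the unique sections of $p^*H$. Properness requires constructing limits; after semistable reduction one runs the relative log MMP, now available for lc pairs thanks to Birkar and Hacon--Xu, to produce the canonical model, and the map to $Y$ extends across the central fibre because $Y$ is projective and $p$ is fixed on the generic fibre.

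Finally, projectivity follows from an application of Koll\'ar's ampleness criterion applied to a CM-type line bundle built from the pushforward of a high power of $\L = K_{\X/S} + \D + p^*H$ on a universal family; sufficient ampleness of $H$ ensures that this pushforward is a nef vector bundle whose determinant descends to an ample class on the coarse moduli space. The main obstacle will be properness: one must verify that the output of the relative MMP is genuinely compatible with the fixed morphism to $Y$, in the sense that the limit morphism is itself a stable map rather than acquiring singularities worse than slc or losing the map structure across extremal contractions. This is exactly where sufficient ampleness of $H$ plays its technical role, controlling how $p^*H$ interacts with the MMP steps; a careful accounting of this compatibility, together with checking that the universal family exists after passing to a suitable quotient by the $\PGL$-ambiguity in the Hilbert-scheme embedding, is the technical heart of the argument.
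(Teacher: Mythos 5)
Your overall strategy is the paper's: Alexeev's four-step program with modern MMP input. But the step you yourself identify as ``the main obstacle'' --- that the output of the relative MMP still carries a morphism to $Y$ --- is left unresolved, and your suggested reason (``the map to $Y$ extends across the central fibre because $Y$ is projective'') is not an argument: a morphism defined off the central fibre of a degeneration does not automatically extend just because the target is projective. The missing idea in the paper is to run the log canonical closure of Hacon--Xu \emph{relative to $C'\times Y$} rather than relative to $C'$, after first resolving the indeterminacy of $\tilde{\X}\dashrightarrow C'\times Y$; the map to $Y$ is then built into the output by construction. Sufficient ampleness of $H$ then plays a precise and different role from the one you assign it: the log canonical model is only ample \emph{relative to} $Y$, and Lemma \ref{relative-to-absolute-ampleness} upgrades this to absolute ampleness of $K_X+D+p^*H$ using Fujino's bound $(K_X+D).C\ge -2n$ on the length of extremal rays for slc pairs, which is exactly why $H-2nM$ is required to be nef.

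A second genuine gap is that both your separatedness and properness arguments implicitly assume the fibres are normal. For slc pairs the log canonical ring need not be finitely generated, so one cannot take canonical models or run the MMP directly; the paper instead normalises, works with the conductor $\bar F$ and the different, applies uniqueness of lc models (for separatedness) or log canonical closures (for properness) componentwise, and then reassembles the slc limit via Koll\'ar's gluing theory, checking that the involution $\tau$ on $(\bar F^\nu,\Diff_{\bar F^\nu}(\bar D))$ extends over the puncture and that the glued space inherits the map to $Y$ from $\tau$-invariance. Two smaller points: your boundedness step applies boundedness of stable pairs to $(X,D)$, but $(X,D)$ is not a stable pair since $K_X+D$ need not be ample; the correct move (Alexeev's) is to replace $p^*H$ by a general $G\sim_{\Q}p^*H$ so that $(X,D+G)$ is a genuine stable pair to which Hacon--McKernan--Xu applies. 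And you never verify finiteness of automorphism groups, which is needed for the Keel--Mori quotient to produce a separated algebraic space before Koll\'ar--Alexeev--Fujino projectivity is invoked.
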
  

The construction of this moduli space is due to Kontsevich in the case $n=1$ and Alexeev in the case $n=2$ \cite{va-icm,va-stablemaps}. Our proof uses several deep results from the minimal model program, and follows Alexeev's strategy in the case $n=2$. The main differences compared with Alexeev's work arise due to the progress made in the minimal model program over the last twenty years. In particular the moduli space we construct has a slightly different scheme structure to the space constructed by Alexeev. His construction was restricted to components of the moduli space where the general element is irreducible, as one cannot apply the minimal model progrem na\"ively to slc pairs in general \cite{JK-example}, which leads us to use Koll\'ar's gluing theory. Moreover by applying Koll\'ar's theory of hulls \cite{kollar-hulls-husks}, we are able to give the modul space a unique scheme structure. 

We emphasise that we are not really proving any new technical results in the minimal model program. Instead, our work can be seen as a new application of the existing techniques.  In the absolute case  $Y=\{pt\}$, our construction reduces to the construction of the moduli space of stable slc models, originating in the work of Koll\'ar-Shepherd Barron \cite{KSB} and Alexeev \cite{Alexeev-moduli-KSBA,va-stablemaps}, and we refer to \cite{HK-book,jk-moduli-survey} for a survey of this construction in this case. 

We begin with the definition of the moduli functor of stable maps, which reduces to \cite[Definition 29]{jk-moduli-survey} in the  absolute case $Y=\{pt\}$.

Let $(\X,\D)\to S\times Y$ be a family, flat over $S$, whose fibres over $S$ are stable maps. For a coherent sheaf $F$ on $\X$, denote by $F^{[m]}$ the reflexive hull of $F^{\otimes m}$. Since there is a subscheme $Z\subset \X$ satisfying $Z\cap \X_s$ has codimension two for all $s\in S$ and $(\omega_{\X/S}\otimes \D \otimes p^*H)^{\otimes m}$ is locally free on $X\backslash Z$ for each $m$, this sheaf admits a reflexive hull. It is not true in general that one has an isomorphism \begin{equation}\label{kollar-example}(\omega_{\X/S}\otimes \scO_{\X}(\D) \otimes p^*H)^{[m]}|_{\X_s} \cong (\omega_{\X_s}\otimes \scO_{\X_s}(\D_s) \otimes (p^*H)_s)^{[m]},\end{equation} and the main subtlety in the definition of the moduli functor is to impose a condition on the admissible families such that this property holds.

\begin{definition} Fix an integer valued function $h(m)$. We define the \emph{moduli functor of stable maps} to be: \[
\scM(S)=\left\{
\begin{aligned}
&\text{Projective morphisms $(\X,\D)\to S\times Y$ such that:}\\ 
&\text{$(i)$ $\X\to S$ and $\D\to S$ are flat,}\\
&\text{$(ii)$ the fibres over each $s \in S$ are stable maps,}\\
&\text{$(iii)$ the Hilbert function of each fibre}\\
&\text{$\chi(\X_s, (\omega_{\X_s}\otimes \scO_{\X}(\D_s) \otimes H)^{[m]})=h(m)$ is fixed,}\\
&\text{$(iv)$ $(\omega_{\X/S}\otimes \scO_{\X}(\D) \otimes p^*H)^{[m]}$ is flat over $S$ for all $m \in \Z_{>0}$,}\\
&\text{$(v)$ $(\omega_{\X/S}\otimes \scO_{\X}(\D) \otimes p^*H)^{[m]}$ commutes with arbitrary base change,}\\
&\text{modulo isomorphisms over $S$.}
\end{aligned}
\right\}
\]
with morphisms given by taking the pullback.
\end{definition}

\begin{remark} Here two families $(\X,\D)\to S\times Y$ and  $(\X',\D')\to S\times Y$ are isomorphic over $S$ if there exists an isomorphism $(\X,\D)\to Y \cong (\X',\D')\to Y$ over $S$.\end{remark}

Condition $(v)$ is an adaptation of Koll\'ar's condition to our setting. It means that, for a family  $\X\to S$ as above and an arbitrary morphism $\alpha: T \to S$, for all $m\in\Z$ we have \[ \alpha_X^*(\omega_{\X/S}\otimes \scO_{\X}(\D) \otimes p^*H)^{[m]}_{\X/S} \cong (\omega_{\X/T}\otimes \scO_{\X_T}(\D) \otimes p^*H)^{[m]}.\] Here $\X_T= \X \times_T S$ is the fibre product, while by $\scO_{\X_T}(\D)$ we mean the pullback of the sheaf $\scO_{\X}(\D)$ to $\X_T$. Remark that when $T={s} \in S$ with $\alpha_T$ the inclusion, Koll\'ar's condition ensures the isomorphism (\ref{kollar-example}) exists. 

\begin{remark}\label{kollar-condition-remark} The last two conditions in the definition of the moduli functor are automatic over a reduced base, see \cite[Definition 28]{jk-moduli-survey} and the preceding discussion. Note that the extra term $p^*H$ in the various sheaves in the last two conditions in the definition of the moduli functor plays no role, since it is a locally free on $Y$. However it seems more convenient to include it in order to apply results in the literature directly.\end{remark}

To prove Theorem \ref{moduli-space-existence}, we follow the usual strategy of proving various properties of the moduli functor. Namely, we will prove separatedness, properness, local closedness, boundedness and the finiteness of the automorphism group. Applying the general theory of \cite{KM,kollar-quotients} will then result in the representability of the moduli functor as a separated alegraic space of finite type; we then appeal to a result of Alexeev to obtain projectivity \cite[Theorem 4.2]{va-stablemaps}.

We begin by proving separatedness of the moduli functor, which means that for each family over a  punctured curve $C^0$, a possible extension to $C$ is unique.

\begin{proposition}\label{prop-separated} The moduli functor of stable maps is separated. \end{proposition}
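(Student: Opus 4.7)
The plan is to verify the valuative criterion of separatedness. Given two families $(\X_i, \D_i) \to C \times Y$ for $i = 1, 2$ in $\scM(C)$ over a smooth pointed curve $(C, 0)$ that are isomorphic on $C^0 := C \setminus \{0\}$ via some $\phi^0$ compatible with the projections to $Y$, I would show $\phi^0$ extends to an isomorphism $\phi$ over all of $C$. The strategy is to recover each $\X_i$ canonically from the data on the general fibre together with a common birational model of the two families, exploiting the fact that $L_i := \omega_{\X_i/C} \otimes \scO(\D_i) \otimes p_i^*H$ is relatively ample and that Koll\'ar's condition (v) controls how its reflexive powers behave under specialisation to the central fibre.

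First I would take an equivariant common log resolution $h_i : W \to \X_i$ of the birational map $\X_1 \dashrightarrow \X_2$ induced by $\phi^0$. Because the general fibres of $\X_1$ and $\X_2$ are canonically identified via $\phi^0$, the strict transforms of $\D_1$ and $\D_2$ agree as a single divisor $\D_W$ on $W$. Each pair $(\X_i, \D_i)$ is then the relative log canonical model over $C$ of an appropriate pair on $W$, since $K_{\X_i/C} + \D_i + p_i^*H$ is relatively ample; equivalently $\X_i \simeq \mathrm{Proj}_C \bigoplus_m (\pi_i)_* L_i^{[m]}$. The key step is to invoke uniqueness of log canonical models in the slc setting: in the lc case this follows from Birkar-Cascini-Hacon-McKernan and subsequent work, and one then passes to the slc setting by working on the normalisations $\bar{\X}_i^\nu$ with their conductor divisors $\bar F_i$, applying lc uniqueness there, and gluing back via the involutions $\tau_i$ on $\bar F_i^\nu$ using Koll\'ar's gluing theory (which shows an slc variety is determined by its normalisation together with the conductor data). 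This produces the desired $C$-isomorphism $\phi : \X_1 \to \X_2$.

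It remains to check compatibility with the maps to $Y$: since $p_2 \circ \phi$ and $p_1$ are two morphisms from $\X_1$ to the separated scheme $Y$ agreeing on the dense open $\X_1 \times_C C^0$, the equaliser locus is closed and hence all of $\X_1$. The main obstacle is the uniqueness step in the slc setting. One must use Koll\'ar's condition (v) to ensure that the graded rings $\bigoplus_m (\pi_i)_* L_i^{[m]}$ computing $\X_i$ are determined by their restrictions to $C^0$ together with the slc structure of the central fibre, and then combine this with Koll\'ar's hulls and gluing theory to handle reducible slc central fibres rather than merely klt ones. This is precisely the technical point at which the strategy must depart from Alexeev's original treatment of the surface case, where the general member of a component was assumed irreducible.
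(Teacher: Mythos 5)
Your proposal is correct and follows essentially the same route as the paper: pass to the normalisations with their conductors, apply uniqueness of log canonical models for lc pairs arising from a common resolution, and recover the slc families via Koll\'ar's gluing theory, with agreement of the maps to $Y$ following from separatedness of $Y$. The paper makes explicit two details you elide --- that inversion of adjunction gives slc-ness of the total space pairs $(\X,\X_0+\D)$ so that lc uniqueness applies, and that the conductor involutions agree because they are morphisms coinciding away from a locus with codimension-one preimage --- and it does not need Koll\'ar's condition (v), which over a reduced base is automatic.
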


\begin{proof} The proof is identical to the case $Y=\{pt\}$, however as we are not aware reference for this in the general slc setting we include the proof.

Let $(C,0)$ be a pointed curve, and let $(\X,\D)\to C\times Y$ and $(\X',\D') \to C\times Y$ be two families of stable maps which are isomorphic away from $0 \in C$. We wish to show $(\X_0,\D_0)\to Y$ is isomorphic to  $(\X'_0,\D_0')\to Y$, where by stability $K_{\X_0}+\D_0$ and $K_{\X'_0}+\D'_0$ are relatively ample. It is enough to show that the isomorphism between $(\X,\D)$ and $(\X',\D')$ over the punctured curve $C^0$ extends to an isomorphism over all of $C$, as once this has been prove the maps to $Y$ must be equal since they agree over the preimage of $C\setminus \{0\}$ which is open.

As each fibre of the families is slc, inversion of adjunction applies to give that the pairs $(\X,\X_0+\D)$ and $(\X',\X'_0+\D')$ are themselves slc \cite[Lemma 2.12]{Patakfalvi-fibered} \cite{Kaw}. Thus it suffices to show that the slc pairs $(\X,\X_0+\D)$ and $(\X',\X'_0+\D')$ are isomorphic. 

We now reduce to the normal case for pairs. Take the normalisations $\nu: \bar{\X}\to \X$ and $\nu': \bar{\X}'\to \X'$ with conductors $\bar{F}, \bar{F}'$. The pairs $(\bar{\X},\bar{\X_0}+\bar{\D}+\bar{F})$ and $(\bar{\X}',\bar{\X_0}'+\bar{\D}'+\bar{F}')$ are lc pairs which are both canonical models of a common resolution. Thus they are isomorphic by uniqueness of canonical models for lc pairs \cite[Theorem 3.52]{KM-book}. But $(\X,\X_0+\D)$ and $(\X',\X'_0+\D')$ are determined by their normalisations together with the involutions  $\tau: (\bar F^{\nu}, \Diff_{\bar F^{\nu}}(\bar D)) \to (\bar F^{\nu}, \Diff_{\bar F^{\nu}}(\bar D))$ and $\tau': (\bar F'^{\nu}, \Diff_{\bar F'^{\nu}}(\bar D'))\to (\bar F'^{\nu}, \Diff_{\bar F'^{\nu}}(\bar D'))$. Remark that the involutions agree, as they are morphisms which agree away from $0\in C$, which has codimension one preimage in the conductors. Thus $(\X,\X_0+\D)$ and $(\X',\X'_0+\D')$ are isomorphic, as required.
\end{proof}

\begin{remark}The reason one cannot prove separatedness directly using the argument in the irreducible case is that the canonical ring of an slc variety is not finitely generated in general \cite{JK-example}.
\end{remark}

To prove properness, we will first need the following, which is proved in an essentially identical way to the surface case \cite[Lemma 2.23]{va-stablemaps}.

\begin{lemma}\label{relative-to-absolute-ampleness} Let $p: (X,D)\to (Y,M)$ be such that $X$ is slc and $K_X+D$ is $p$-ample. Then $K_X+D+p^*H$ is ample, where $H$ is ``sufficiently ample'' in the sense of Remark \ref{rmk:suffample}. \end{lemma}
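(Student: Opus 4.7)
The natural strategy is to combine Kleiman's ampleness criterion with the Cone Theorem for slc pairs. The coefficient $2n$ in the definition of ``sufficiently ample'' is precisely the length bound on extremal rays provided by this Cone Theorem: for the slc pair $(X,D)$, every $(K_X+D)$-negative extremal ray of $\overline{NE}(X)$ is spanned by a rational curve $R$ with $0 < -(K_X+D)\cdot R \leq 2n$ (by Fujino's extension of the Cone Theorem to the semi-log canonical setting).

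The key step is to verify that $(K_X+D+p^*H)\cdot C > 0$ for every irreducible curve $C\subset X$. If $p$ contracts $C$, then $p^*H\cdot C = 0$ and the $p$-ampleness of $K_X+D$ yields the inequality. If $p$ does not contract $C$, then $p(C)$ is a curve in $Y$ and the projection formula gives $p^*H\cdot C = \deg(p|_C)\cdot H\cdot p(C)\geq H\cdot p(C)$; since $H - 2nM$ is ample while $M$ is an integral ample line bundle on $Y$ (so $M\cdot p(C)\geq 1$), we obtain $H\cdot p(C) > 2n$. When $(K_X+D)\cdot C \geq 0$ the sum is manifestly positive. When $(K_X+D)\cdot C < 0$, the Cone Theorem decomposes $[C]$ in $\overline{NE}(X)$ as a nonnegative combination of extremal rays $R_j$ together with a class $\beta$ in the $(K_X+D)\geq 0$ part; each $R_j$ is necessarily non-$p$-contracted (else $p$-ampleness of $L$ would force $L\cdot R_j>0$, contradicting $L\cdot R_j<0$), so the bounds $-(K_X+D)\cdot R_j \leq 2n$ and $p^*H\cdot R_j > 2n$ combine to give $(K_X+D+p^*H)\cdot R_j > 0$, while $(K_X+D+p^*H)\cdot \beta \geq 0$, producing strict positivity on $[C]$.

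Having established positivity on $\overline{NE}(X)\setminus\{0\}$, Kleiman's criterion concludes the proof. For the Nakai--Moishezon verification on a higher-dimensional irreducible subvariety $V\subset X$, the case $V\subset p^{-1}(y)$ in a fibre reduces the top self-intersection to $(K_X+D)^{\dim V}\cdot V > 0$ directly by $p$-ampleness; otherwise one argues inductively, using that any curve in $V$ is a curve in $X$, so the same Cone-Theoretic bounds apply verbatim to $(K_X+D+p^*H)|_V$ on $V$.

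The main obstacle is technical rather than conceptual: one must apply the Cone Theorem with the precise length bound $2\dim X$ in the non-normal slc setting (which relies on Fujino's extension), and invoke Kleiman's criterion / Nakai--Moishezon on a projective but possibly non-normal scheme. The geometric content of the lemma is entirely encoded in the numerical matching between the extremal-ray length bound and the coefficient appearing in the definition of ``sufficiently ample.''
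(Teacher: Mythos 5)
Your core argument is the same as the paper's: the whole point of the lemma is the numerical match between Fujino's slc extension of Mori's bound $-(K_X+D)\cdot R \leq 2n$ on the length of extremal rays and the coefficient $2n$ in the definition of ``sufficiently ample'', combined with the dichotomy between $p$-contracted curves (handled by $p$-ampleness of $K_X+D$) and non-contracted curves (handled by $M\cdot p_*C\geq 1$). That part is correct and is exactly what the paper does.

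The gap is in the final upgrade from curve-level positivity to ampleness. First, Kleiman's criterion requires strict positivity on $\overline{NE}(X)\setminus\{0\}$, not merely on classes of irreducible curves, and your decomposition only gives $(K_X+D+p^*H)\cdot\beta\geq 0$ on the part $\overline{NE}(X)_{K_X+D\geq 0}$; you have not excluded a nonzero limit class $\beta$ there with $(K_X+D)\cdot\beta = p^*H\cdot\beta = 0$ (the natural fix — $p^*H\cdot\beta=0$ forces $\beta$ to be ``$p$-vertical'' and then relative Kleiman for the $p$-ample divisor $K_X+D$ applies — needs to be said, and requires identifying such $\beta$ with a class in the relative cone). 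Second, the Nakai--Moishezon ``induction'' on a subvariety $V$ not contained in a fibre does not work as stated: the cone theorem on $V$ concerns $K_V$ (plus a boundary obtained by adjunction), not $(K_X+D)|_V$, so the bounds do not ``apply verbatim''; and in any case positivity of a divisor on every curve of $V$ does not imply $(L|_V)^{\dim V}\cdot V>0$ — that implication is false for nef divisors in general, and proving it is precisely the content of Nakai--Moishezon, not an input to it. The paper sidesteps all of this: it uses the extremal-ray bound only to prove that $K_X+D+2n\,p^*M$ is \emph{nef} (for which a curve-by-curve check genuinely suffices), and then deduces ampleness of $K_X+D+p^*H = (K_X+D+2n\,p^*M) + p^*(H-2nM)$ formally from the combination of a nef divisor, the $p$-ampleness of $K_X+D$, and the ampleness of $H-2nM$ on $Y$. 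I would recommend restructuring your conclusion along those lines rather than invoking Kleiman or Nakai--Moishezon directly.
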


\begin{proof} We show $K_X+D+2n p^*M$ is nef, which implies the statement by relative ampleness of $K_X+D$ and the fact that $H-2nM$ is ample by the definition of sufficient ampleness given in Remark \ref{rmk:suffample}. 

Suppose not, so that there is a curve $C$ such that $(K_X+D+2n p^*M).C<0$. As $K_X+D$ is relatively ample and $p^*M$ is semi-ample, $C$ cannot map to a point. By Fujino's version \cite{fujino} of Mori's theorem on the length of extremal rays \cite{mori-extremal} for slc varieties, we know that $(K_X+D).C \geq -2n$. As $C$ does not map to a point, we have $p^*H.C=H.p_*C \geq 1$, hence $(K_X+D+2n p^*M).C\geq0$, as required. \end{proof}

We now proceed to the proof of properness. Recall this entails proving that for an arbitrary family of stable maps over a smooth punctured curve $C^0\subset C$, there exists an extension to some $C'$, where $C'\to C$ is a finite map branched over $0$.

\begin{proposition}\label{prop-proper} The moduli functor of stable maps is proper. \end{proposition}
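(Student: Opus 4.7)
The plan is to follow Alexeev's strategy \cite{va-stablemaps} for properness, adapting it to the higher dimensional slc setting by invoking the advances in the minimal model program of the last twenty years, in particular existence of relative log canonical models for lc pairs of log general type due to Birkar-Cascini-Hacon-McKernan, Birkar and Hacon-Xu. Since the statement permits a finite base change $C' \to C$ branched over $0$, I will freely replace $C$ by such covers. The input is a family $p^0 : (\X^0, \D^0) \to C^0 \times Y$ of stable maps, and the goal is to produce a flat extension $(\X, \D) \to C \times Y$ whose central fibre is a stable map and for which the Koll\'ar conditions (iv), (v) in the moduli functor hold; uniqueness of any such extension is already guaranteed by Proposition \ref{prop-separated}.

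First I would produce some flat, not yet stable, extension $(\tilde \X, \tilde \D) \to C \times Y$. Because the relative polarisation $K_{\X^0/C^0} + \D^0 + (p^0)^*H$ is relatively ample over $Y \times C^0$ with fixed Hilbert polynomial, a uniform power of it embeds the fibres over $C^0$ into a fixed projective bundle over $Y$, and the scheme-theoretic closure of the resulting family produces such an extension. After a further finite base change and a log resolution, semistable reduction for pairs arranges that $(\tilde\X, \tilde\D + (\tilde\X_0)_{\mathrm{red}})$ is log smooth with $\tilde\X_0$ reduced simple normal crossings, the morphism to $Y \times C$ still extending $p^0$.

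Next I would run the relative log MMP for $(\tilde\X, \tilde\D + (\tilde\X_0)_{\mathrm{red}})$ over $Y \times C$. Since $K + D$ is relatively ample over $Y$ on the general fibre, $K_{\tilde\X/C} + \tilde\D$ is relatively big over $Y \times C$, and the relative log canonical model $(\X, \D) \to Y \times C$ exists by the MMP results cited above. Every step is a morphism over $Y \times C$, so the target map $p : \X \to Y$ is inherited automatically. Inversion of adjunction forces the central fibre $(\X_0, \D_0)$ to be semi-log canonical, and $K_{\X_0} + \D_0$ is relatively ample over $Y$ as part of being a canonical model; Lemma \ref{relative-to-absolute-ampleness} then upgrades this to ampleness of $K_{\X_0} + \D_0 + p^*H$ using the sufficient ampleness hypothesis on $H$, so $(\X_0, \D_0) \to Y$ is a stable map.

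Finally I would verify the Koll\'ar conditions (iv), (v), namely flatness and base-change compatibility of the reflexive powers $(\omega_{\X/C}(\D) \otimes p^*H)^{[m]}$. Over a reduced one-dimensional base these follow, once the total space is a relative canonical model with slc fibres, from Koll\'ar's theory of hulls and husks \cite{kollar-hulls-husks}. The hardest step is the middle one: ensuring the existence of the relative log canonical model and the slc property of the central fibre in the full slc generality we work in. The presence of the target $Y$ is handled simply by running the MMP relatively over $Y \times C$ rather than over $C$, which requires no essential modification of the arguments in the literature but does mean we need to appeal to the fully relative form of the existence theorems.
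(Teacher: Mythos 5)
There is a genuine gap. Your argument is, in substance, the paper's ``case (i)'': when the general fibre is irreducible (log canonical), one completes the family, performs semistable reduction, and obtains the relative log canonical model over $C'\times Y$ --- the paper does this by citing Hacon--Xu's log canonical closure theorem rather than a bare existence statement for relative lc models, but the content is the same, and the conclusion via inversion of adjunction, Lemma \ref{relative-to-absolute-ampleness}, and the automatic validity of conditions (iv), (v) over a reduced base matches the paper. What your proof does not handle is the case where the general fibre is a genuinely non-normal semi-log canonical pair. In that situation the total space $\X$ is non-normal, a ``log resolution'' factors through its normalisation, and the relative log canonical model of that resolution computes (at best) the lc model of the normalisation with the conductor added to the boundary --- a boundary term your set-up omits, since $\pi^*(K_X+D)\sim_{\Q}K_{\bar X}+\bar F+\bar D$. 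More fundamentally, one cannot simply ``run the MMP'' to produce the non-normal limit: the log canonical ring of an slc pair is not finitely generated in general (Koll\'ar's example \cite{JK-example}), which is precisely why Alexeev's original construction was restricted to components whose general member is irreducible.

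The paper's proof therefore splits off a second case: normalise the family into components $(\bar\X_j^0,\bar F_j^0+\bar D_j^0)$, apply case (i) to each component with the conductor in the boundary, extend the involution $\tau$ on $(\bar F^{\nu},\Diff_{\bar F^{\nu}}(\bar D))$ across the central fibre (using that a family of stable pairs on the conductor has a unique limit), and then invoke Koll\'ar's gluing theory to reconstruct a non-normal $(\hat\X,\hat\D)$ from the gluing data $(\bar\X,\bar F,\bar D,\tau)$ --- checking finally that the glued space still admits a morphism to $C'\times Y$ via the universal property of the quotient. None of this is present in your proposal, and it cannot be recovered from the MMP step alone; you would need to add the normalisation--gluing argument (or restrict the statement to families whose general fibre is irreducible) for the proof to be complete.
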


\begin{proof}

This is a variant of \cite[Theorem 1.5]{HX-lc-closures} which is proven in the absolute case $Y=\{pt\}$. We split the proof into two cases: the first when the general fibre is log canonical (in particular, irreducible), the second is the slc case. Again, the reason is that the log canonical ring of an slc pair may not be finitely generated \cite{JK-example}, so a different argument is needed using Koll\'ar's gluing theory.

$(i)$ (the general fibre is log canonical) 

We apply the valuative criterion for properness, so let $(C,0)$ be a smooth pointed curve and $(\X^0,\D^0)\to C^0 \times Y$ be a family of stable maps. We can complete this to some family $(\X,\D)\to C$, which may not admit a map to $Y$ extending the given one away from $0$. By semistable reduction \cite[Theorem 7.17]{KM-book}, there exists a finite map $C'\to C$, branched over $0 \in C$, and a resolution $\tilde{\X}\to \X'_{\nu} \to C'$ with $\X'_{\nu}$ the normalisation of $\X' = \X\times_C C'$ such that $\tilde{\X}_0+\tilde{D}$ is a reduced relatively snc divisor, where $\tilde \D$ is the pullback. Passing to a resolution of indeterminacy of the induced rational map $\tilde{\X}\dashrightarrow C' \times Y$ if necessary, we can assume that $\tilde{\X}$ itself admits a morphism to $C'\times Y$. The important point is that the relative log canonical model of $(\tilde{\X}^0,\tilde{D}^0)\to C'^{0} \times Y$ is $(\X^0,\D^0)\to C'^0 \times Y$.

Consider the log smooth, hence dlt, pair $(\tilde{X}, \tilde{X}_0+\tilde{D})$. By the fundamental result of Hacon-Xu \cite[Theorem 1.1]{HX-lc-closures} on the existence of log canonical closures, since the relative log canonical model of $(\tilde{\X}^0,\tilde{D}^0)$ over $C'^{0} \times Y$ exists, and  $(\tilde{X}, \tilde{X}_0+\tilde{D})$ is a dlt pair, the relative log canonical model of $(\tilde{X}, \tilde{X}_0+\tilde{D})$ also exists. Write this model as $(\bar{\X},\bar{\X}_0+\bar{\D}) \to C'\times Y$. Since $(\bar{\X},\bar{\X}_0+\bar{\D})$ is log canonical, adjunction implies $(\bar{\X}_0,\bar{\D}_0)$ is semi-log canonical. As $K_{\X_0}+\bar{\D}_0$ is ample over $Y$, Lemma \ref{relative-to-absolute-ampleness} and Remark \ref{rmk:suffample} imply that $(\bar{\X}_0,\bar{\D}_0)\to Y$ is a stable map. By Remark \ref{kollar-condition-remark}, since we are working over a reduced base, the final two conditions in definition of the moduli functor are satsified. Thus $(\bar{\X},\bar{\D})\to C' \times Y$ is the family we seek.

$(ii)$ (the general case) 

Assume we have a family $(\X^0,\D^0)\to C^0 \times Y$ where all fibres over $C^0$ are stable maps. Let $\nu: (\bar{\X}^{0},\bar{\D}^{0}) = \cup_j (\bar{\X}_j^{0},\bar{\D}^{0}_j)\to (\X^0,\D^{0})$ be the normalisation, where $\bar{\X}_j^{0}$ are the connected components, and let $\bar{F}^0= \cup_j \bar{F}^0_j$ be the conductor. Let $\tau^0$ be the involution of the normalisation of $\bar{F}^0$, so that the data $(\bar{\X}^{0},\bar{F}^{0}+\bar{D}^0)$ together with the involution $\tau^0: (\bar{F}^{0,\nu},\Diff_{\bar{F}^{0,\nu}}(\bar{D}^0))\to (\bar{F}^{0,\nu},\Diff_{\bar{F}^{0,\nu}}(\bar{D}^0))$ determines $(\X^0,\D^0)\to C^0 \times Y$. 

By $(i)$, after passing to a base change of $C$, each $(\bar{\X}_j^{0},\bar{F}_j^{0}+\bar{D}_j^0)$ admits a log canonical closure $(\bar{\X}_j,\bar{F}_j+\bar{D}_j)\to C'\times Y$ extending $(\bar{\X}_j^{0},\bar{F}_j^{0}+\bar{D}_j^0)\to C'^0\times Y$. In particular, $(\bar\X_{j,0},\bar{F}_{j,0}+\bar{D}_{j,0})$ is log canonical for each $j$.

Set $\bar{\X} =  \cup_j \bar{\X}_j^{0}$, and let $\bar{F}$ be the closure of $\bar{F}^{0}$ in $\bar{X}$. We need to extend the involution to the normalisation of $\bar{F}$. Since $(\bar{F}^{n},\Diff_{\bar{F}^n}(\bar{D}^n))\to C\times Y$ is a family of stable varieties, the fibre over $0\in C$ must be unique. Thus the existence of the involution over $C^0$ implies the existence of the involution over $C$, as in \cite[Theorem 1.5, Step 2]{HX-lc-closures}. 

Next, following \cite[Section 7, Step 3]{HX-lc-closures}, we apply Koll\'ar's gluing theory \cite[Section 5]{kollar-singularities}. A direct application of  \cite[Section 7, Step 3]{HX-lc-closures} gives that $(\bar{X},\bar{F},\bar{D},\tau)$ is the gluing data of some $(\hat\X,\hat{\D})\to C'$, and what remains to be proved is that  $(\hat\X,\hat{\D})$ actually admits a map to $C'\times Y$. But the map to $Y$ exists for precisely the same reason that the map to $C'$ exists, which is the universal property of $(\hat\X,\hat{\D})$ as defined by Koll\'ar \cite[Definition 9.4]{kollar-singularities}. Indeed, the induced map $(\bar{F}^{\nu},\Diff_{\bar{F}^{\nu}}(\bar{D}))\to Y$ is $\tau$-invariant, hence $(\hat\X,\hat{\D})$ admits a map to $C'\times Y$ as required.

Summing up, we obtain a stable map $(\hat\X_0,\hat{\D}_0)\to Y$ as required.\end{proof}

Next we show that each stable map has finite automorphism group.
\begin{definition} We define the \emph{automorphism group} $\Aut(p)$ of a map $p: (X,L)\to (Y,H)$ to be the automorphisms of $(X,L)$, which cover $p$. \end{definition}
 
From another point of view, these are isomorphisms of the graph $\Gamma_p\subset X\times Y$ of $p$ that lift to the polarisation  $L|_{\Gamma_p} = (K_X + D + p^*H)|_{\Gamma_p}$.  The following is due to Alexeev \cite[Theorem 3.23 (3)]{va-stablemaps}.

\begin{proposition}\label{prop-aut} The moduli functor of stable maps has finite automorphism group. \end{proposition}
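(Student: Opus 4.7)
The plan is to show $\Aut(p)$ is finite by reducing to the classical finiteness of the automorphism group of a canonically polarised slc pair. First I would observe that $\Aut(p)$ embeds as a closed subgroup scheme of $\Aut(X,D)$. Indeed, for $\phi \in \Aut(p)$, the condition $p \circ \phi = p$ gives $\phi^* p^*H = p^*H$, so the identity $\phi^* L = L$ (with $L = K_X + D + p^*H$) reduces to $\phi^*(K_X + D) = K_X + D$; in the moduli-theoretic setting $\phi$ must also preserve $D$ as a subscheme, since this is built into the notion of isomorphism of families of stable maps. Hence it suffices to show that $\Aut(X,D)$ is finite whenever $(X,D)$ is slc with $K_X + D$ ample.

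Next I would reduce the slc case to the lc case via Koll\'ar's gluing theory reviewed in Section 5.1. Let $\pi: \bar X \to X$ be the normalisation with conductor $\bar F$. Since $(X,D)$ is determined by the triple $(\bar X, \bar F + \bar D, \tau)$, every automorphism of $(X, D)$ lifts uniquely to an automorphism of $(\bar X, \bar F + \bar D)$ that commutes with the gluing involution $\tau$ on $\bar F^{\nu}$, producing an injection $\Aut(X, D) \hookrightarrow \Aut(\bar X, \bar F + \bar D)$. Because $\pi$ is finite and $K_X + D$ is ample, the pullback $K_{\bar X} + \bar F + \bar D = \pi^*(K_X + D)$ is ample as well, so $(\bar X, \bar F + \bar D)$ is an lc pair with ample log-canonical class.

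The remaining step is the classical fact that for a log canonical pair $(\bar X, \bar \Delta)$ with $K_{\bar X} + \bar \Delta$ ample the automorphism group $\Aut(\bar X, \bar \Delta)$ is finite: it is a closed subgroup scheme of the automorphism scheme of the canonically polarised pair (hence of finite type), and its identity component would give a nontrivial global logarithmic vector field on $(\bar X, \bar \Delta)$, contradicting bigness of $K_{\bar X} + \bar \Delta$ (this is Iitaka's theorem in the smooth case, with the now-standard extension to lc pairs). The main obstacle is the slc-to-lc reduction, which requires invoking Koll\'ar's gluing theory to ensure lifts to the normalisation exist and behave well with respect to $\tau$; once this is in place, finiteness in the lc case is a well-known consequence of the minimal model program and infinitesimal rigidity.
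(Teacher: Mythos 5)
There is a genuine gap at the very first reduction. You claim that it suffices to show $\Aut(X,D)$ is finite ``whenever $(X,D)$ is slc with $K_X+D$ ample'', but in this moduli problem $K_X+D$ is \emph{not} assumed ample: only $L=K_X+D+p^*H$ is ample, and $K_X+D$ is merely relatively ample over $Y$ (this is exactly the content of Lemma \ref{relative-to-absolute-ampleness}, which upgrades relative ampleness to ampleness of $L$ only after adding $p^*H$). Concretely, the identity map $(\pr^1,0)\to(\pr^1,\scO(3))$ is a stable map in the sense of this section ($K_{\pr^1}+p^*H=\scO(1)$ is ample), yet $\Aut(\pr^1,0)=\mathrm{PGL}_2$ is infinite while $\Aut(p)$ is trivial; similarly for a product $C\times\pr^1\to\pr^1$ with $g(C)\ge 2$. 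So the statement you reduce to is false under the hypotheses actually available, and the inclusion $\Aut(p)\hookrightarrow\Aut(X,D)$ throws away precisely the information that matters, namely that the automorphisms cover $p$. Your subsequent slc-to-lc reduction via the gluing triple and the Iitaka-type finiteness for lc pairs with ample log canonical class are fine in themselves, but they are applied to a pair whose log canonical class need not be ample (or even big).

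The paper closes this gap in two ways, both of which use the covering condition essentially. Alexeev's argument identifies $\Aut(p)$ with $\Aut(X,D+p^*G)$ for a general $G\sim_{\Q}H$: since $p^*H$ is semi-ample, $(X,D+p^*G)$ is again slc, and now its log canonical class is $K_X+D+p^*G\sim_{\Q}L$, which \emph{is} ample, so the absolute finiteness result for slc pairs applies. The alternative (Hacon--Xu) argument rules out $\C^*$ or $\C_+$ subgroups directly: the closure of an orbit of a general point is a curve $C$ with $(K_X+D).C\le 0$, but because the automorphism covers $p$ the curve $C$ is contracted by $p$, and relative ampleness of $K_X+D$ over $Y$ forces $(K_X+D).C>0$, a contradiction. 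Either route is a viable repair of your proposal; the key missing idea is to absorb $p^*H$ into the boundary (or, equivalently, to exploit that orbit curves of $\Aut(p)$ lie in fibres of $p$).
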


\begin{proof} 

Alexeev proves this by noting that $\Aut(p)$ equals $\Aut(X,p^*G)$ for a general $G\sim_{\Q}H$. As $p^*H$ is semi-ample, the pair $(X,G+D)$ is slc for general $G$ and one concludes by ampleness of $K_X +D+p^* H$ (by Remark \ref{rmk:suffample}).

One can alternatively adapt the direct proof of Hacon-Xu \cite[Lemma 3.4]{HX-automorphisms} in the absolute case $Y = \{pt\}$ with $X$ normal. First note that one can assume normality by taking the normalisation and working with automorphisms preserving the conductor. Next, it suffices to show $\Aut(p)$ contains no copies of $\C^*$ or $\C_{+}$. Taking a copy of either, the closure of orbit of a point $x \in X$ gives a curve $C \subset X$. Under these hypotheses, Hacon-Xu show that for a general $x\in X$, we have $(K_X+D).C \leq 0$. The automorphisms of the map $p$ correspond to curves which map to a point in $Y$. As $K_X+D$ is relatively ample, this gives a contradiction and so $\Aut(p)$ is finite. \end{proof}

We next prove boundedness of the moduli functor. Each stable map defines a point in a product of Hilbert schemes of subschemes of $\pr^N\times Y$ using the natural polarisation and taking the divisor into account. To prove boundedness, we need to prove that $N$ can be chosen independent of the map in question. This follows directly from Alexeev's proof in the case of surfaces, which we briefly recall.

\begin{proposition}\cite{va-stablemaps}\label{prop-bounded} The set of stable maps is bounded. \end{proposition}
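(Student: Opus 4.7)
My plan is to reduce boundedness of the set of maps $p$ to boundedness of an underlying family of slc pairs, and then to appeal to the recent boundedness results in the minimal model program. Recall that a stable map is determined by $((X,D);L)$ with $L = K_X + D + p^*H$ ample, together with the morphism $p : X \to Y$ itself. Since the Hilbert function $h(m)$ of $L$ is fixed, in particular the volume $L^n$ is fixed, and the intersection numbers $(K_X+D)^i . (p^*H)^{n-i}$ are determined (for $i=0,\dots,n$) by $h(m)$ via Riemann--Roch.

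The first step is to replace the map $p$ by an auxiliary slc pair. Since $p^*H$ is semi-ample on $X$, by Bertini for slc pairs one can choose a general $G \in |\ell p^*H|$ for a suitable fixed $\ell$ such that $(X, D + \tfrac{1}{\ell}G)$ is slc; the divisor $K_X + D + \tfrac{1}{\ell}G$ is $\mathbb Q$-linearly equivalent to $L$, and hence is ample with fixed volume. By the boundedness theorem for slc pairs with bounded Cartier index and fixed volume (Hacon--McKernan--Xu \cite{HX-lc-closures} and subsequent developments in the MMP), the set of such pairs $(X, D + \tfrac{1}{\ell}G)$ forms a bounded family. In particular, $X$ itself and the divisor $D$ (which is a subdivisor) are parameterised by a scheme of finite type, and there is a uniform integer $m_0$ such that $L^{m_0}$ is very ample with $h^0(L^{m_0}) = h(m_0)$ for every stable map in the class.

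The second step is to encode the morphism $p$ in a Hilbert scheme. Fix $N+1 = h(m_0)$; the Kodaira embedding $X \hookrightarrow \mathbb P^N$ given by $L^{m_0}$ together with the graph $\Gamma_p \subset X \times Y \subset \mathbb P^N \times Y$ realises each stable map as a point in a product of Hilbert schemes of $\mathbb P^N \times Y$ (one factor for the graph, and one for the divisor $D \subset X$ viewed inside $\mathbb P^N \times Y$), modulo the $GL(N+1)$-action coming from choice of basis of $H^0(X, L^{m_0})$. Since the Hilbert polynomials of $\Gamma_p$ and of $D$ inside $\mathbb P^N \times Y$ are determined by $h$ and the fixed intersection numbers against $H$, all stable maps lie in a closed subscheme of a fixed Hilbert scheme, which is of finite type. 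This provides the required bounding family.

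The main obstacle is guaranteeing the boundedness of $(X, D+ \tfrac{1}{\ell}G)$ uniformly over all stable maps; this is not elementary and relies on deep boundedness results for slc pairs in the MMP. Once this input is available, the rest of the argument is formal. A subtlety worth checking is that $G$ should be chosen so that the pair remains slc and the coefficients of $D + \tfrac{1}{\ell}G$ lie in a DCC set (so that the MMP boundedness theorems apply); this is ensured by the semi-ampleness of $p^*H$ and the sufficient ampleness hypothesis on $H$ from Remark \ref{rmk:suffample}, which allows $\ell$ to be chosen uniformly.
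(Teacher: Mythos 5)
Your proposal follows essentially the same route as the paper's proof (which is Alexeev's argument): perturb to an slc pair $(X, D+G)$ with $G\sim_{\Q}p^*H$ general, invoke the Hacon--McKernan--Xu boundedness theorem to obtain a uniform $m$ for which $m(K_X+D+p^*H)$ is very ample, and then parameterise the graphs $\Gamma_p\subset\pr^N\times Y$ (and the divisors) by points of a fixed Hilbert scheme with prescribed Hilbert polynomial. The only slip is the citation: the relevant input is \cite[Theorem 1.1]{HMX-boundedness}, not \cite{HX-lc-closures}, which concerns log canonical closures and is used in the properness argument instead.
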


\begin{proof}

As explained by Alexeev, this follows from the absolute case $Y=\{pt\}$ for pairs, which is due to Hacon-McKernan-Xu \cite[Theorem 1.1]{HMX-boundedness}. We repeat Alexeev's argument for the reader's convenience.

Since $p^*H$ is semiample, the pair $(X,G+D)$ is slc for a general $G\sim_{\Q} p^*H$. By \cite[Theorem 1.1]{HMX-boundedness}, it follows that there is an $m \gg 0$ such that each $(X,G+D)$ with fixed Euler characteristic $\chi(X,m(K_X+D+p^*H))$ satisfies $m(K_X+G+p^*H)$ is very ample and is without higher cohomology. Here we are using that as line bundles, $K_X+D+p^*H \cong K_X+D+p^*G$. A map $p: X \to Y$ is determined by its graph $$\Gamma_p \subset \pr(H^0(X,m(K_X+D+p^*H))) \times Y.$$ Thus each graph is parameterised by a point in a Hilbert scheme of subschemes of this product. As the Hilbert polynomial of the graph is equal to the fixed quantity $\chi(X,m(K_X+D+p^*H))$, this embeds each graph in a single a Hilbert scheme. The same argument applies to the divisors, giving the required boundedness. \end{proof}

The next step in producing the moduli space is to prove the moduli functor is locally closed. From the previous Proposition, each stable map is parameterised by a point in some fixed Hilbert scheme of subschemes of $\pr^N\times Y$. To prove local closedness of the moduli functor, we must show that points in this Hilbert scheme parameterising stable maps form a locally closed subscheme.

\begin{proposition}\label{prop-locallyclosed} The moduli functor of stable maps is locally closed. \end{proposition}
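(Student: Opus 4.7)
The plan is to adapt Alexeev's argument from the surface case \cite{va-stablemaps}, making use of recent advances in the minimal model program that were unavailable at the time. By Proposition \ref{prop-bounded}, every stable map arises as a fibre of a universal family $(\X,\D)\to\mathcal{H}\times Y$ over a product of Hilbert schemes $\mathcal{H}$ of subschemes of $\pr^N\times Y$ (with separate factors for $\X$ and $\D$). I would show that the conditions (i)-(v) in the definition of the moduli functor cut out a locally closed subscheme of $\mathcal{H}$, and then take their intersection.

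I would check the conditions in order of increasing subtlety. First, the loci where $\X\to\mathcal{H}$ and $\D\to\mathcal{H}$ are flat, and where the fibres have the fixed Hilbert polynomial, are respectively open and open-and-closed in $\mathcal{H}$. On the flat, equidimensional locus Serre's $S_2$ condition is open, and being nodal in codimension one is locally closed, so demi-normality defines a locally closed subscheme. The requirement that $K_{\X/\mathcal{H}}+\D$ be $\Q$-Cartier of a fixed index, together with Koll\'ar's flatness and base-change conditions (iv) and (v), can be given a canonical locally closed scheme structure by invoking Koll\'ar's theory of hulls and husks \cite{kollar-hulls-husks}: this theory describes a representable functor of ``simultaneous reflexive powers'' that precisely carves out the locus where $(\omega_{\X/\mathcal{H}}\otimes\scO_{\X}(\D)\otimes p^*H)^{[m]}$ is flat and commutes with base change for every $m$. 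On this stratum, the condition that each fibre be semi-log canonical (with the divisor $\D$ and the conductor taken into account) becomes locally closed by combining inversion of adjunction with the fact that slc is a constructible condition in flat families satisfying Koll\'ar's condition. Finally, ampleness of $K_{\X_s}+\D_s+p_s^*H$ is an open condition by standard semicontinuity.

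The relative nature of the problem (the map to $Y$) introduces no additional trouble: the morphism $p:\X_s\to Y$ is already encoded in the inclusion $\X_s\hookrightarrow\pr^N\times Y$ defining the graph, so no extra conditions are needed to ensure $p$ varies algebraically. The main obstacle is therefore exactly that of the absolute KSBA setting, namely pinning down a canonical scheme structure reflecting Koll\'ar's conditions on reflexive powers and the locally closed nature of the slc locus. The former is handled by Koll\'ar's theory of hulls, the latter by recent progress in the MMP --- both developments postdating Alexeev's original two-dimensional treatment, which is why the resulting moduli space carries a slightly finer scheme structure than Alexeev's.
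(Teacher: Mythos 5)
Your strategy coincides with the paper's: stratify a (product of) Hilbert scheme(s) of subschemes of $\pr^N\times Y$ by the successive conditions, using openness of $S_2$, demi-normality and slc (the last via Alexeev \cite{AH}), and Koll\'ar's theory of hulls and husks \cite{kollar-hulls-husks} to give conditions (iv) and (v) a canonical locally closed scheme structure. Two steps need repair. First, your final condition is wrong: what must be imposed is not that $K_{\X_s}+\D_s+p_s^*H$ is ample (ampleness is automatic once the polarisation is identified), but that the very ample line bundle $\scO_{\pr^N}(1)|_{\X_s}$ supplied by the Hilbert scheme actually agrees with $m(K_{\X_s}+\D_s+p_s^*H)$; without this the stratum contains subschemes embedded by unrelated polarisations and the functor is not the one being represented. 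This agreement is a locally closed condition by a lemma of Viehweg \cite[Lemma 1.19]{EV}, which is the reference the paper uses. Second, it is not automatic that every point of the Hilbert scheme of $\pr^N\times Y$ is the graph of a morphism to $Y$; one must first restrict to the locus of graphs, which is an open condition \cite[p96]{kollar-rational-curves}. (A smaller quibble: the flat locus of a family of sheaves is not open in general; the paper invokes the flattening decomposition theorem \cite[Lecture 8]{DM} to obtain a locally closed decomposition rather than asserting openness.) With these additions your argument is the paper's.
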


\begin{proof} 

Fix a subscheme $Z\subset \Hilb(\pr^N\times Y)\times \Hilb(\pr^N\times Y)$, where the presence of two Hilbert schemes is due to the presence of divisors. We wish to show that the locus inside $Z$ parameterising stable maps is locally closed.

Firstly we may replace $Z$ with the locus in $Z$ parameterising graphs of morphisms to $Y$, since being a graph of a morphism is an open condition \cite[p96]{kollar-rational-curves}. By the flattening decomposition theorem \cite[Lecture 8]{DM}, we then obtain a locally closed decomposition of $Z$ such that each component parameterises flat families. Next, being reduced and $S^2$ are open conditions, and similarly being Gorenstein in codimension one is an open condition. From here, we obtain that being slc is also an open condition by a result of Alexeev \cite[Appendix A]{AH}. 

We can therefore consider an arbitrary projective family $(\X,\D)\to S$ of graphs of maps to $(Y,H)$ with each fibre satisfying the above properties. By Koll\'ar's theory of hulls and husks, the condition that the family satisfies Koll\'ar's condition is then locally closed \cite[Corollary 25]{kollar-hulls-husks} (see also \cite[Theorem 31]{jk-moduli-survey}). Indeed, one can apply \cite[Corollary 25]{kollar-hulls-husks} directly by Remark \ref{kollar-condition-remark}. 

The final point to prove is that the polarisation agreeing with $K_X+D+p^*H$ is a locally closed condition, which is due to Viehweg \cite[Lemma 1.19]{EV}.

\end{proof}

We now produce the moduli space as an algebraic space. So far we have produced a locally closed subscheme for which each point parameterises a stable map. These points are only unique up to the obvious projective transformations, thus we take a quotient to remove this ambiguity and form a genuine moduli space.

\begin{proposition} The moduli functor of stable maps is coarsely represented by a proper, separated algebraic space of finite type. \end{proposition}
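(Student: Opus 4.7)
The plan is to assemble the five properties already established --- boundedness (Proposition \ref{prop-bounded}), local closedness (Proposition \ref{prop-locallyclosed}), finiteness of automorphisms (Proposition \ref{prop-aut}), separatedness (Proposition \ref{prop-separated}), and properness (Proposition \ref{prop-proper}) --- and convert them into a representability statement by way of a $\mathrm{PGL}$-quotient of a locally closed piece of a Hilbert scheme, in the style of Keel--Mori \cite{KM} and Koll\'ar \cite{kollar-quotients}.

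First I would use boundedness to choose an integer $m\gg 0$, depending only on the fixed Hilbert function $h$, such that $(K_X+D+p^*H)^{\otimes m}$ is very ample without higher cohomology for every stable map in our class. This gives a closed embedding of each graph $\Gamma_p$ (together with $\D$) into $\pr^N\times Y$ for $N=h(m)-1$ fixed, hence a point of the product of Hilbert schemes
\[
\H \;=\; \Hilb_{h_1}(\pr^N\times Y)\times \Hilb_{h_2}(\pr^N\times Y)
\]
with the two Hilbert polynomials determined by $h$. By Proposition \ref{prop-locallyclosed}, the subset $U\subset \H$ parameterising those points corresponding to stable maps is a locally closed subscheme; by construction $U$ is of finite type and comes equipped with a universal family $(\X^U,\D^U)\to U\times Y$.

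Second, the group $G=\mathrm{PGL}(N+1)$ acts on $U$ (trivially on the $Y$-factor), and by the very-ampleness choice two points of $U$ lie in the same $G$-orbit if and only if they parameterise isomorphic stable maps. By Proposition \ref{prop-aut} the stabiliser of every point is finite. The Keel--Mori theorem \cite{KM}, in the form used by Koll\'ar \cite{kollar-quotients}, then produces a categorical quotient $\M(Y,H) := U/G$ as a separated algebraic space of finite type, together with a map $U\to \M(Y,H)$ whose fibres are precisely the $G$-orbits. The fact that $\M(Y,H)$ coarsely represents the moduli functor now follows formally: any family $(\X,\D)\to S\times Y$ of stable maps Zariski-locally on $S$ embeds in $\pr^N\times Y$ after trivialising $\pi_*\mathcal{O}(m(K_{\X/S}+\D+p^*H))$, yielding a morphism $S\to U$; different trivialisations differ by the $G$-action, so the induced morphism $S\to \M(Y,H)$ is canonical and compatible with base change.

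Third, separatedness of $\M(Y,H)$ follows from Proposition \ref{prop-separated} via the valuative criterion applied to the universal family, while properness (not asserted in this proposition but needed for Theorem \ref{moduli-space-existence} later) follows from Proposition \ref{prop-proper} in the same way, since both valuative conditions descend from the moduli functor to the coarse space. The main technical obstacle I would anticipate is verifying that the Keel--Mori/Koll\'ar hypotheses are met in our relative setup: namely, that the groupoid $G\times U\rightrightarrows U$ is finite and flat over its image, which is what allows the quotient to be formed as an algebraic space rather than merely as a stack. This reduces, using Proposition \ref{prop-aut} and the properness of the diagonal of the moduli functor (which is itself a consequence of Propositions \ref{prop-separated} and \ref{prop-aut}), to checking that the stabiliser group scheme is unramified and finite, a verification which is essentially formal once one has finiteness of automorphisms fibrewise. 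Projectivity is then a further step obtained via Alexeev's ampleness criterion \cite[Theorem 4.2]{va-stablemaps}, but that is needed only for the subsequent Theorem \ref{moduli-space-existence} and not for the present proposition.
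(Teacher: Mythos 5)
Your proof follows essentially the same route as the paper: a Keel--Mori/Koll\'ar quotient of the locally closed subscheme of the Hilbert scheme by the projective linear group, with separatedness, finite stabilisers, finite type and properness supplied by Propositions \ref{prop-separated}, \ref{prop-aut}, \ref{prop-bounded} and \ref{prop-proper} respectively. One small slip: the proposition as stated does assert properness, so your parenthetical claiming it is deferred to Theorem \ref{moduli-space-existence} is not quite accurate, though your argument covers it in any case.
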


\begin{proof}

From the properties already established, this follows immediately from the main results of \cite{KM,kollar-quotients}. 

By the main results of \cite{KM,kollar-quotients}, the quotient of a separated algebraic space $\scH$ by a reductive group, such that each point has finite stabiliser, exists as a separated algebraic space. We take $\scH$ to be the locally closed subscheme of the Hilbert scheme produced in Propostion \ref{prop-locallyclosed}, and the reductive group to be the automorphisms of the corresponding projective space. Separatedness of this scheme follows from Proposition \ref{prop-separated}, while Proposition \ref{prop-aut} implies each point has finite stabiliser. That the resulting algebraic space is of finite type  and proper follows from the boundedness and properness proved in Proposition \ref{prop-bounded} and Proposition \ref{prop-proper} respectively.

 \end{proof}

The last point is to show the moduli space produced above is projective. This follows from Alexeev's work, which uses the technique of Koll\'ar \cite{JK-projectivity}.

\begin{theorem}\cite[Theorem 4.2]{va-stablemaps} The moduli space of stable maps is projective. \end{theorem}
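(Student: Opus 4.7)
The plan is to adapt Koll\'ar's ampleness criterion, as carried out by Alexeev in the surface case, to our setting. Having already constructed a proper separated algebraic space $\M(Y,H)$ of finite type parameterising stable maps, projectivity amounts to exhibiting an ample line bundle on it. The natural candidate is a determinant (CM-type) line bundle: for a sufficiently divisible integer $m$, on any family $(\X,\D)\to S\times Y$ of stable maps one forms
\[
\Lambda_m := \det \pi_{S,*}\bigl((\omega_{\X/S}\otimes \scO_\X(\D)\otimes p^*H)^{[m]}\bigr),
\]
where $\pi_S:\X\to S$ is the projection. By Koll\'ar's condition (condition $(v)$ in the moduli functor), the formation of this sheaf commutes with base change, and by boundedness (Proposition \ref{prop-bounded}) we may take a single $m$ working uniformly for all families. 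Thus $\Lambda_m$ defines a line bundle on the base of the universal-like family over the Hilbert scheme.

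The first step is to check that $\Lambda_m$ descends to a line bundle on $\M(Y,H)$. This is a functorial check: the $\Aut(p)$-linearisation is canonical (since $\Lambda_m$ is built functorially from $(X,D,p)$), and by Proposition \ref{prop-aut} the automorphism groups are finite, so after possibly replacing $m$ by a further multiple the line bundle descends from the Hilbert-scheme parameter space through the reductive quotient that produced $\M(Y,H)$.

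The main step, and the main obstacle, is to show $\Lambda_m$ is ample on $\M(Y,H)$. Here one invokes Koll\'ar's ampleness lemma: given a proper family of polarised pairs for which (i) the families of log canonical sheaves are flat and commute with base change, (ii) the relative log canonical model is unique (separatedness, Proposition \ref{prop-separated}), and (iii) variation of Hodge structure / Viehweg-type semi-positivity holds for the direct images $\pi_{S,*}(\omega_{\X/S}\otimes \scO_\X(\D)\otimes p^*H)^{[m]}$, the determinant is big, and after passing to a finite-to-one cover it separates isomorphism classes in a way that forces ampleness on the quotient. The semipositivity input is the nontrivial ingredient: in the absolute case it is Viehweg-Koll\'ar, and in the relative (twisted by $p^*H$) case it is proved by Alexeev \cite[Theorem 4.2]{va-stablemaps} by observing that twisting by the semi-ample pullback $p^*H$ preserves the needed weak positivity, so Koll\'ar's criterion applies verbatim to $\Lambda_m$.

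Putting these pieces together: $\Lambda_m$ is a line bundle on the proper separated algebraic space $\M(Y,H)$ of finite type, and by the Koll\'ar-Alexeev criterion it is ample. Hence $\M(Y,H)$ is a projective scheme. The one delicate point to verify carefully is that the hypothesis ``$H$ sufficiently ample'' (in the sense of Remark \ref{rmk:suffample}), together with Lemma \ref{relative-to-absolute-ampleness}, guarantees that the polarisation on each fibre really is globally ample and not merely relatively ample, so that the standard Viehweg-positivity machinery feeding Koll\'ar's ampleness lemma applies without modification.
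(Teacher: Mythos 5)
Your outline follows the same route as the paper: both reduce projectivity to Alexeev's Theorem 4.2, i.e.\ to Koll\'ar's ampleness criterion applied to a determinant line bundle built from $(\omega_{\X/S}\otimes \scO_\X(\D)\otimes p^*H)^{[m]}$, with descent through the reductive quotient handled by finiteness of automorphisms and uniformity of $m$ by boundedness. Up to that point your sketch is fine and matches what the paper (implicitly, via the citation) does.

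There is, however, one genuine gap, and it is precisely the point the paper singles out in its one-sentence commentary. You write that the semipositivity input "is proved by Alexeev \dots so Koll\'ar's criterion applies verbatim." It does not apply verbatim in the setting of this paper. Alexeev's projectivity argument feeds Koll\'ar's ampleness lemma with a semipositivity theorem of Koll\'ar (\cite[Proposition 4.7]{JK-projectivity}) that is proved only when the fibres are surfaces; the moduli space here parameterises stable maps whose domains are slc pairs of arbitrary dimension. The needed weak positivity of the pushforwards $\pi_{S,*}\bigl((\omega_{\X/S}\otimes\scO_\X(\D)\otimes p^*H)^{[m]}\bigr)$ for families of higher-dimensional semi-log canonical pairs is a substantially harder statement, and is due to Fujino \cite[Theorem 1.12]{fujino-moduli}. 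Without invoking that result (or an equivalent), the step "the determinant is big \dots which forces ampleness on the quotient" has no justification once $\dim X>2$. So you should replace the appeal to "Viehweg--Koll\'ar" and to Alexeev's twisting observation by an explicit appeal to Fujino's semipositivity theorem for slc pairs; the rest of your argument then goes through as in Alexeev.
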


Alexeev's proof appeals to a semipositivity theorem of Koll\'ar which applies for surfaces \cite[Proposition 4.7]{JK-projectivity}, the analogous result for higher dimensional pairs is due to Fujino \cite[Theorem 1.12]{fujino-moduli}.

A further consequence of this is that the natural analogue of the CM line bundle, as defined in equation \eqref{eq:defCMlinebundle} in the absolute case, on the moduli space of stable maps defined in Section \ref{sec:fibrations} is nef, as it is the leading order term in a Knudson-Mumford expansion of line bundles which are proved to be ample by Alexeev. We expect that the CM line bundle is actually ample; in the absolute case, this has been proved by Patakfalvi-Xu \cite{PX}.

\subsection{Enumerative geometry}\label{sec:GW}

Given the existence of the moduli space of canonically polarised stable maps $\M(Y,H)$, it is natural to ask whether there is an analogue of Gromov-Witten invariants in higher dimensions. We  first briefly recall the definition of Gromov-Witten invariants, for which we refer to \cite{FP} for further details. 

Let $\overline M_{g,m}(Y,H)$ be the (proper) Kontsevich moduli space of stable maps with domain of genus $g$ and $m$ marked points. Then for $1\leq i \leq m$ one has maps $\ev_i: \overline M_{g,m}(Y,H)\to Y$ defined by $\ev_i(X,p_1,\hdots p_m) = p_i$. Through these maps, one can pullback cycles from $Y$ to $\overline M_{g,n}(Y,H)$. The moduli space $\overline M_{g,m}(Y,H)$ admits a virtual fundamental class $[\overline M_{g,m}(Y,H)]^{vir}$, and one defines the Gromov-Witten invariants by integrating $$\GW(\alpha_1,\hdots,\alpha_k) = \int_{[\overline M_{g,m}(Y,H)]^{vir}} \alpha_1\cdot \hdots \cdot \alpha_k,$$ where the $\alpha_k$ are cycles on $Y$ pulled back via the evaluation maps. The use of the virtual fundamental class ensures deformation invariance. Through the natural map $\pi: \overline M_{g,m}(Y,H) \to \overline M_{g,m}$ obtained by stabilising, the Gromov-Witten invariant are computed on the orbifold $\overline M_{g,m}$ (where one can intersect cycles).

In \cite[Question 7.1]{va-icm}, Alexeev suggests an definition of Gromov-Witten invariants in higher dimensions, by intersecting the $D_i$ to obtain a zero cycle on $Y$ and mimicking the above definition when the domain is a curve. Here we suggest an alternative approach.

Consider the moduli space of stable maps $p: (X,D) \to (Y,H)$ where $X$ is $n$-dimensional and $D = \sum_{i=1}^m D_i$ is a Weil divisor. Then the analogue of the evaluation map sends $p: (X,D) \to (Y,H)$ to $D_i$, which defines a point in a Hilbert scheme  $\Hilb_i$ of subschemes of $Y$. Denote this map by $ev_i: \M(Y,H)\to \Hilb_i$. The moduli space $\M$ of canonically polarised varieties is, in general, highly singular, so one cannot intersect cycles on it. However, one can intersect line bundles on an arbitrary scheme. Thus it is natural to take line bundles $L_1,\hdots L_k$ on $\M(Y,H)$ and $H_1,\hdots,H_{m}$ on $\Hilb_i$, where $\dim \M(Y,H) = k+m$, and define $$\GW(L_1,\hdots L_k,H_1,\hdots H_m) = \int_{\M(Y,H)} L_1\cdot\hdots \cdot L_k \cdot \ev_1^*H_1\cdot \hdots \cdot \ev_m^*H_m.$$ One can similarly pullback multiple line bundles using the same evaluation map. A natural choice of line bundle on $\M(Y,H)$ is the CM line bundle. Note that a $\Q$-Cartier divisor on $\Hilb_i$ defines a line bundle, so one obtains line bundles on $\Hilb_i$ by picking certain families of subschemes of $Y$. Another interesting way to produce line bundles on $\Hilb_i$ is to take any line bundle $T$ on $Y$, and take $H_i$ to be the induced CM line bundle $T_{CM}$ on $\Hilb_i$ defined as in equation \eqref{eq:defCMlinebundle}.

While this does define a numerical invariant, their geometric interpretation is not transparent. The line bundles $H_i$ define divisors on $\Hilb_i$, and thus define families of subschemes of $Y$. The higher dimensional Gromov-Witten invariants may be related to the count of varieties $X$ intersecting these families of subschemes. 

Ideally, one would replace the above with an integral over a virtual fundamental class, in the hope of making the above invariants deformation invariant. Unfortunately, this seems out of reach with present techniques, essentially because one no longer obtains a two term obstruction complex when $\dim X>1$.  

The moduli spaces $\M(Y,H)$ are constructed rather non-explicitly, and even when $Y$ is a point there are very few explicit examples of the moduli space. Thus it seems somewhat hopeless to compute the above invariants in any cases at present. A variant of the above construction would be to construct a moduli space of K-stable \emph{Fano} maps, i.e. with $-K_X-D-p^*H$ ample. In the absolute case, the moduli space of K-stable Fanos can often be constructed quite explicitly \cite{OSS}, thus it seems much more reasonable that one could compute the analogous invariants in the Fano case.

\bibliography{stablemaps}
\bibliographystyle{amsplain}

\vspace{4mm}

\end{document}